\newtheorem{prop}{Proposition}[section]
\newtheorem{lem}[prop]{Lemma}
\newtheorem{cor}[prop]{Corollary}
\newtheorem{thm}[prop]{Theorem}
\newtheorem{conj}[prop]{Conjecture}
\theoremstyle{definition}
\newtheorem{rem}[prop]{Remark}
\newtheorem{defi}[prop]{Definition}
\newtheorem{ex}[prop]{Example}
\numberwithin{equation}{section}
\newcommand{\rev}{\mathop{\rm rev}\nolimits}
\def\Cay{\mathsf{Cay}}
\def\A{\mathcal{A}}
\def\B{\mathcal{B}}
\def\C{\mathcal{C}}
\def\Z{\mathbb{Z}}
\def\BHR{\mathop{\rm BHR}}
\def\eps{\varepsilon}
\def\equad{\quad \textrm{and} \quad}
\def\su{ \rightharpoonup}
\def\giu{\rightharpoonup}
\def\th{\vartheta}
\def\ol{\oplus}
\newcommand{\arq}[1]{\stackrel{#1}{\rightharpoondown}}
\newcommand{\art}[1]{\stackrel{#1}{\rightsquigarrow}}
\begin{document}

\title[New methods to attack the Buratti-Horak-Rosa conjecture]{New methods to attack \\the Buratti-Horak-Rosa conjecture}

\author{M.~A.~Ollis}
\address{Marlboro Institute for Liberal Arts and Interdisciplinary Studies,
Emerson College,
Boston, MA~02116, USA}
\email{matt\_ollis@emerson.edu}

\author{Anita Pasotti}
\address{DICATAM - Sez. Matematica, Universit\`a degli Studi di Brescia,
Via Branze 43,
I~25123 Brescia, Italy}
\email{anita.pasotti@unibs.it}

\author{Marco A. Pellegrini}
\address{Dipartimento di Matematica e Fisica, Universit\`a Cattolica del Sacro Cuore,
Via Musei 41,
I~25121 Brescia, Italy}
\email{marcoantonio.pellegrini@unicatt.it}

\author{John R. Schmitt}
\address{Mathematics Department, Middlebury College, Middlebury, VT~05753, USA}
\email{jschmitt@middlebury.edu}

\subjclass[2010]{05C38}
\keywords{Hamiltonian path, complete graph, edge-length, linear realization, graceful permutation.}

\begin{abstract}
The conjecture, still widely open, posed by Marco Buratti, Peter Horak and Alex Rosa  states that a list $L$ of $v-1$ positive
integers not exceeding $\left\lfloor \frac{v}{2}\right\rfloor$ is the list of edge-lengths of a suitable
Hamiltonian path of the complete graph with vertex-set $\{0,1,\ldots,v-1\}$ if and only if,
for every divisor $d$ of $v$, the number of multiples of $d$ appearing in $L$
is at most $v-d$.
In this paper we present new methods that are based on linear realizations and can be applied to prove the validity of
this conjecture for a vast choice of lists. As example of their flexibility, we consider  lists whose underlying set
is one of the following: $\{x,y,x+y\}$, $\{1,2,3,4\}$, $\{1,2,4,\ldots,2x\}$, $\{1,2,4,\ldots,2x,2x+1\}$.
We also consider lists with many consecutive elements.
\end{abstract}

 \maketitle

\section{Introduction}

Throughout this paper $K_v$ denotes the complete graph whose vertex-set is $\{0,1,\dots,v-1\}$ for any positive integer $v$.
Following \cite{HR}, we define the \emph{length} $\ell(x,y)$ of an edge $[x,y]$ of $K_v$ as
$$\ell(x,y)=\min(|x-y|,v-|x-y|).$$
Given a subgraph $\Gamma$ of $K_v$, the list of edge-lengths of $\Gamma$ is the list
$\ell(\Gamma)$ of the lengths (taken with their respective multiplicities) of  all the edges
of $\Gamma$. For our convenience, if a list $L$ consists of
$a_1$ $1$s, $a_2$ $2$s, \ldots, $a_t$ $t$s,
we will write $L=\left\{1^{a_1},2^{a_2},\ldots,t^{a_t}\right\}$ and $|L|=\sum a_i$;
the set $\{i \mid a_i>0\}\subseteq L$  will be called the \emph{underlying set} of $L$.

The following conjecture was proposed in a private communication by Marco Buratti to Alex Rosa in 2007.
Buratti has never worked on it and he finally mentioned in 2013 (see \cite[p. 14]{BM}).

\begin{conj}[M. Buratti]
For any prime $p=2n+1$ and any list $L$ of $2n$ positive integers not exceeding $n$, there exists a
Hamiltonian path $H$ of $K_p$ with $\ell(H)=L$.
\end{conj}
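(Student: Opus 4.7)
The plan is to exploit the fact that $p$ is prime: the only divisor conditions in the Buratti--Horak--Rosa conjecture that can fail are those for divisors $d>1$ of $v=p$, but there are none, so the hypothesis reduces to the bare statement that $L$ has $2n$ entries, each at most $n$. I would then reduce the cyclic problem to a problem in $\Z$ by seeking a \emph{linear realization} of $L$: a permutation $x_0,x_1,\ldots,x_{2n}$ of $\{0,1,\ldots,2n\}$ whose consecutive absolute differences $|x_i-x_{i-1}|$ form the multiset $L$. Because every entry of $L$ is at most $n=\lfloor p/2\rfloor$, the integer difference $|x_i-x_{i-1}|$ coincides with the cyclic length $\ell(x_{i-1},x_i)$ modulo $p$, so reducing a linear realization mod $p$ directly yields the Hamiltonian path of $K_p$ demanded by the conjecture. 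The entire problem thus becomes a combinatorial question about Hamiltonian paths of the path-graph on $\{0,1,\ldots,2n\}$.

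The strategy is a block-and-glue approach built around the library of linear realizations developed in the paper. First I would collect atomic realizations for simple multisets such as $\{1^a\}$, $\{k^a\}$ and underlying sets $\{1,2\}$, $\{1,2,3\}$, $\{1,2,3,4\}$, together with \emph{concatenation lemmas}: if $L_1$ is realized on $\{0,\ldots,m\}$ ending at a prescribed endpoint, and $L_2$ is realized on $\{0,\ldots,2n-m\}$ starting at a compatible endpoint, then a shifted concatenation realizes $L_1\cup L_2$ on $\{0,\ldots,2n\}$. Next I would use \emph{pendant-removal induction}: if some $\ell\in L$ can be attached to either end of a putative realization without forcing a collision, delete it and recurse on $L\setminus\{\ell\}$. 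To handle $L$ that lack convenient pendants, I would use \emph{interior edits}, rerouting one edge of an existing realization to absorb an extra length $\ell$ at the cost of adjusting a neighbouring edge. A parallel track starts from a graceful permutation of $\{0,\ldots,2n\}$, which automatically realizes $\{1,2,\ldots,2n\}$, and applies local swap operations to drive its length multiset toward $L$.

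The inductive scaffolding would be on $n$, with a secondary induction on the number of distinct lengths appearing in $L$. I would split $L=L_{\text{small}}\cup L_{\text{large}}$ at some threshold and build separate realizations on nested intervals, gluing them along an extremal length at the interface. For each $n$ a finite number of base lists must be realized by hand; the induction step would show that adding or removing a length preserves realizability, using the concatenation and interior-edit lemmas to produce the needed realization of $L$ from one for $L\setminus\{\ell\}$ together with an atomic block for $\{\ell\}$. Primality of $p$ enters only at the very end, when we pass from the linear realization to the cyclic path; until that moment the argument is entirely integer-combinatorial.

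The main obstacle is exactly what has kept this conjecture open since 2007: the splicing and interior-edit lemmas carry delicate non-collision side conditions that fail for lists of extremal shape. In particular, when $L$ is dominated by a single large length, say $L=\{n^a,\ldots\}$ with $a$ close to its maximum, there is very little room to place these long edges inside $\{0,\ldots,2n\}$ without forcing wrap-around, and no linear realization exists; such lists can only be handled by a genuinely cyclic construction that uses primality of $p$ more seriously than just voiding the divisibility hypothesis. A complete proof along the lines above would therefore require either a structural theorem guaranteeing a linear realization for every admissible $L$ outside an explicit short list of exceptions, or a supplementary cyclic technique to dispose of those exceptions. Identifying and taming that exceptional family is where I expect the real difficulty to lie.
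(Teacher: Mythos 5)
This statement is a \emph{conjecture}, not a theorem of the paper: the paper offers no proof of it, and it remains open except for special families of lists (one or two distinct edge-lengths, a handful of three- and four-element underlying sets, all primes $p\leq 23$, and the cases established in the paper itself). So there is nothing in the paper to compare your argument against, and your proposal does not close the gap either --- indeed you concede as much in your final paragraph. The decisive missing step is your inductive claim that ``adding or removing a length preserves realizability'': stated at that level of generality this is not a lemma but a restatement of the conjecture, and none of the machinery you invoke (concatenation of standard linear realizations, pendant removal, interior edits, graceful-permutation swaps) is shown to discharge it. These are exactly the tools the paper develops (Theorem~\ref{th:2lr}, the operations $\eta_x$, $\mu_x$, $\th_x$, $\sigma_x$, and graceful permutations via Lemma~\ref{lem:g1}), and even with careful case-by-case work they only yield the restricted families in Theorems~\ref{main}--\ref{66}, each requiring a surplus of short lengths (e.g.\ $a\geq 3x-3$ ones, or a monotone multiplicity profile) precisely so that the non-collision side conditions can be met.

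A second, structural obstruction: your reduction to linear realizations cannot be made universal. The paper records (citing \cite[Lemma 13]{PPnew}) that admissible lists such as $\{2,3,4^6\}$ and $\{1,2,4^6\}$ admit \emph{no} linear realization at all, so any complete proof must include a genuinely cyclic construction for an exceptional family that is not currently classified. You correctly identify this as the locus of difficulty, but identifying where the difficulty lies is not the same as resolving it. As written, the proposal is a research programme coinciding in spirit with the paper's methods, not a proof of the statement.
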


The conjecture is almost trivially true in the case that $L$ has just one edge-length.
The case of exactly two distinct edge-lengths has been solved
independently in \cite{DJ, HR} and Mariusz Meszka checked by computer that the conjecture is true for all primes $p\leq 23$.
Francesco Monopoli in \cite{M} showed that the conjecture is true when all the elements of the list $L$ appear exactly
twice.

In \cite{HR} Peter Horak and Alex Rosa proposed a generalization of Buratti's conjecture, which has been
restated in an easier way in \cite{PP1} as follows.

\begin{conj}[P. Horak and A. Rosa]\label{HR iff B}
Let $L$ be a list of $v-1$ positive integers not exceeding $\left\lfloor\frac{v}{2}\right\rfloor$.
Then there exists a Hamiltonian path $H$ of $K_v$ such that $\ell(H)=L$ if, and only if, the following condition holds:
\begin{equation}\label{B}
\left. \begin{array}{c}
\textrm{for any divisor $d$ of $v$, the number of multiples of $d$} \\
\textrm{appearing in $L$ does not exceed $v-d$.}
\end{array}\right.
\end{equation}
\end{conj}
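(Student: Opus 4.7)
The plan is to split the biconditional. Necessity is a short counting argument. Fix a divisor $d$ of $v$ and consider the canonical projection $\pi:\mathbb{Z}_v\to\mathbb{Z}_d$; an edge $[x,y]$ of $K_v$ has length divisible by $d$ exactly when $\pi(x)=\pi(y)$. Given a Hamiltonian path $H=(u_0,u_1,\dots,u_{v-1})$ with $\ell(H)=L$, the induced sequence $(\pi(u_0),\dots,\pi(u_{v-1}))$ decomposes into $r$ maximal runs of constant value, and exactly $v-r$ of the $v-1$ edges of $H$ have length divisible by $d$. Since each of the $d$ fibres of $\pi$ must be visited, we get $r\geq d$, hence $v-r\leq v-d$, which is condition \eqref{B}.

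The sufficient direction is the deep content of the conjecture and is in general open; my approach would follow the linear-realizations paradigm signalled in the abstract. A \emph{linear realization} of $L$ is a permutation $(x_0,x_1,\dots,x_{v-1})$ of $\{0,1,\dots,v-1\}$ whose consecutive absolute differences, as a multiset, equal $L$. When every element of $L$ is at most $\lfloor v/2\rfloor$, such a realization immediately gives a Hamiltonian path of $K_v$ with $\ell$-list $L$, so it suffices to produce a linear realization of every admissible list. I would attempt induction on $|L|$, removing a carefully chosen element and rebuilding the larger realization from the smaller via insertion or local surgery, checking at each step that condition \eqref{B} persists for the reduced list.

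Concretely, the induction would rest on two moves: \emph{concatenation}, gluing two realizations on complementary intervals whose domains meet in a single common endpoint; and \emph{insertion}, splicing a new edge of length $t$ into an existing realization at a position where two consecutive values can be separated by a short detour that picks up the new length without re-using any vertex. Because the abstract already advertises explicit results for lists whose underlying set is $\{x,y,x+y\}$, $\{1,2,3,4\}$, $\{1,2,4,\dots,2x\}$, or $\{1,2,4,\dots,2x,2x+1\}$, or for lists dominated by consecutive elements, I would organize the attack around a library of building-block realizations of such shapes and then reduce a general admissible $L$ to a concatenation of these blocks.

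The main obstacle, and the reason the conjecture remains widely open, is that condition \eqref{B} does not behave cleanly under the natural reductions: deleting an element of $L$ changes both $v$ and the divisibility data, and the constraint can become false on the reduced list even when it holds on the original one. The boundary cases where \eqref{B} is tight for some proper divisor $d$ of $v$ are the hardest, because they force the realization to alternate between the fibres of $\mathbb{Z}_v\to\mathbb{Z}_d$ in an essentially rigid pattern and thereby reduce the problem to a tightly coupled recursive instance over $\mathbb{Z}_d$. I expect that any complete proof would need to combine explicit constructions for these extremal lists with a separate, more flexible inductive argument in the slack regime where \eqref{B} holds with strict inequality for every $d>1$.
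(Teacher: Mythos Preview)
The statement you were asked about is a \emph{conjecture}, not a theorem: the paper does not prove it and explicitly says the sufficiency direction remains ``widely open.'' So there is no ``paper's own proof'' to compare against. Your necessity argument is correct and is exactly the standard one the paper alludes to when it says ``the actual conjecture is the sufficiency'' (the argument appears in the reference \cite{HR} cited there).

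For sufficiency, you rightly stop short of claiming a proof. Your sketched strategy---build a library of linear realizations, concatenate and insert to assemble larger ones, and try to reduce a general admissible $L$ to such building blocks---is precisely the paradigm the paper develops (Theorem~\ref{th:2lr} is your concatenation move; the $\eta_x,\mu_x$ maps and the types $\A_x,\B_x,\C_x$ of Definitions~\ref{def:AxBx} and~\ref{def:Cx} formalize your insertion move). The paper then uses this machinery to settle only special families of lists (Theorems~\ref{main}, \ref{pari}, \ref{th1234}, \ref{66}), not the full conjecture. Your diagnosis of the obstruction---that condition~\eqref{B} is not stable under natural reductions, and that the tight cases force rigid fibre-alternation---is accurate and is why no one has pushed this paradigm to a full proof. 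In short: your write-up is a correct proof of the easy direction together with an honest and well-informed discussion of why the hard direction is still open; it is not, and does not claim to be, a proof of the conjecture.
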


It is easy to see that in the statement of the conjecture of Horak and Rosa,
here denoted by $\BHR$, the actual conjecture is the sufficiency.
A list of $v-1$ positive integers not exceeding $\left\lfloor\frac{v}{2}\right\rfloor$ 
and satisfying the necessary condition \eqref{B} shall be said to be \emph{admissible}.
In \cite{HR} it was proved that the conjecture is true when the list $L$ has exactly two distinct elements.
Also, Mariusz Meszka checked by computer that the conjecture is true for all $v\leq 18$.
The case of exactly three distinct edge-lengths has been solved when these lengths are $1,2,3$ in \cite{CDF},
$1,2,5$ or $1,3,5$ or $2,3,5$ in \cite{PP1}, $1,2,4$ or $1,2,6$ or $1,2,8$ in \cite{PPnew}
and when  $L=\{1^a,2^b,t^c\}$ with $t$ even and $a+b\geq t-1$ in \cite{PPnew}.
The only case with four distinct edge-lengths for which the conjecture has been shown to be true is $1,2,3,5$, see \cite{PP1}.

Before giving the main results of this paper we would like to show some
connections between the $\BHR$ conjecture and other problems.
Mariusz Meszka proposed the following conjecture very similar, but easier, to Buratti's.

\begin{conj}[M. Meszka]
For any prime $p=2n+1$
and any list $L$ of $n$ positive integers not exceeding $n$, there exists a
near $1$-factor  $F$ of $K_p$ with $\ell(F)=L$.
\end{conj}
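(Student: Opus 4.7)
My plan is to realize a near $1$-factor as a system of distinct representatives in an auxiliary bipartite structure, and then verify Hall's condition by exploiting the primality of $p$.

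By translation invariance of $\mathbb{Z}_p$ we may fix the missing vertex to be $0$. Label the entries of $L$ as $\ell_1,\ldots,\ell_n$ with multiplicity; for each $i$, the candidate edges of length $\ell_i$ avoiding $0$ are $p-2$ pairs lying on a single Hamiltonian cycle (the cycle generated by $+\ell_i$, which has full length $p$ since $p$ is prime). Building the required near $1$-factor amounts to selecting one candidate per $i$ so that the chosen pairs partition $\mathbb{Z}_p\setminus\{0\}$, a system-of-distinct-representatives problem.

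Next I would attempt a greedy / inductive construction by decreasing length: place a length-$\ell_1$ edge, delete its two endpoints, and recurse. Since deleting vertices destroys the clean group structure, I would not iterate the exact statement but rather prove a stronger ``punctured'' form: for every small $S\subseteq\mathbb{Z}_p$ and every admissible list on the remaining vertices, a matching of prescribed lengths exists. A second route is to connect the problem to the linear realizations developed in the body of this paper: a Hamiltonian path on $p$ vertices with $2n$ edges whose odd-position edges realize $L$ automatically yields a near $1$-factor of the prescribed lengths by keeping only those odd-position edges. A $\BHR$-style construction with any valid complementary list $L'$ of $n$ lengths placed in the even positions would therefore suffice, tying Meszka's conjecture directly to the BHR techniques.

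The main obstacle I expect is handling adversarial lists concentrated on few lengths. The extreme case $L=\{\ell^n\}$ with $\gcd(\ell,p)=1$ forces every edge onto a single $p$-cycle, where a maximum independent set of edges clearly exists but leaves almost no slack. For mixed lists, verifying Hall's condition reduces to an additive-combinatorial claim: for any subfamily of $L$, the union of endpoints of its candidate edges covers more than the required number of vertices of $\mathbb{Z}_p$. I would aim to deduce this from a Cauchy--Davenport-type estimate in $\mathbb{Z}_p$, using primality essentially; the delicate case analysis for lists with a handful of lengths of multiplicities close to $n$ is where I anticipate the real difficulty.
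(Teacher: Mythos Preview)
The paper does not prove Meszka's conjecture at all: it is stated only as background and the authors cite Preissmann and Mischler~\cite{PM} for its complete solution. So there is no ``paper's own proof'' to compare against; what you have written is a sketch of possible attack lines, none of which is carried through.

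Two of the three routes you propose have structural gaps that would prevent them from closing. First, the matching you need is not a system of distinct representatives: each ``slot'' $i$ must claim \emph{two} vertices simultaneously, and those pairs must be mutually disjoint across slots. This is a perfect matching problem in a $3$-partite $3$-uniform hypergraph (slots versus the two endpoints), not a bipartite SDR, so Hall's condition in the form you invoke does not apply. There are Hall-type criteria for such settings, but they are not the deficiency condition you allude to, and a Cauchy--Davenport bound on endpoint unions does not by itself certify them. Second, the $\BHR$ reduction is circular and, worse, insufficient even granted $\BHR$: a $\BHR$ realization only controls the \emph{multiset} of edge-lengths along the path, not which lengths sit in odd versus even positions. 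To force $L$ onto the odd positions you would need an ordered refinement of $\BHR$ strictly stronger than the conjecture itself.

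Your greedy/``punctured'' idea is the only one that points toward an actual inductive proof, but as you note the group structure evaporates once vertices are removed, and you give no replacement invariant. The Preissmann--Mischler argument does proceed constructively but via a careful direct placement using arithmetic in $\mathbb{Z}_p$ (their paper even recasts the result as a characterization of primes); if you want to complete your write-up, that reference is the one to consult rather than trying to pass through $\BHR$.
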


The same problem has been independently  proposed with a different terminology in \cite{B} and it has been completely solved by Emmanuel Preissmann and Maurice Mischler in \cite{PM}.
Also this conjecture has been generalized to the case in which the order of the complete graph is any odd integer, see
\cite{PPMeszka}.

\begin{conj}[A. Pasotti and M.A. Pellegrini]
Let $v=2n+1$ be an odd integer and $L$ be a list   of $n$ positive integers not exceeding $n$. Then there exists a
near $1$-factor  $F$ of $K_v$ with $\ell(F)=L$ if, and only if, the following condition holds:
\begin{equation*}
\left. \begin{array}{c}
\textrm{for any divisor $d$ of $v$, the number of multiples of $d$} \\
\textrm{appearing in $L$ does not exceed $\frac{v-d}{2}$.}
\end{array}\right.
\end{equation*}
\end{conj}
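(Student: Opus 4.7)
The \emph{necessity} of the divisibility condition is a short counting argument parallel to the one underlying condition \eqref{B}. Fix a divisor $d$ of $v$ and partition the vertex set $\{0,1,\ldots,v-1\}$ into the $d$ residue classes modulo $d$, each of cardinality $v/d$. An edge has length a multiple of $d$ if and only if its two endpoints lie in the same class. Since $v$, hence $d$ and $v/d$, are odd, a matching inside a class of size $v/d$ contains at most $(v/d-1)/2$ edges; summing over the $d$ classes yields the claimed bound $(v-d)/2$.

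The \emph{sufficiency} is the open direction, and I would attack it in the spirit of the methods developed in this paper for $\BHR$, through a theory of \emph{linear realizations} for near 1-factors: a pairing of $2n$ distinct elements of $\{0,1,\ldots,v-1\}$ into $n$ pairs $\{a_i,b_i\}$ whose multiset of differences $\{b_i-a_i\}$ equals $L$. Such a pairing yields a near 1-factor of $K_v$ directly, and, compared with the Hamiltonian-path setting, has the major advantage that \emph{disjoint} linear matchings can be concatenated freely: if $L=L_1\sqcup L_2$ and one finds linear matchings for the $L_i$ inside intervals of lengths $w_1,w_2$, then translating the second block by $w_1$ produces a linear matching for $L$ inside an interval of length $w_1+w_2$. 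The core of the argument would be to exhibit a small family of building blocks and to show that every admissible $L$ decomposes into such blocks with total width at most $v-1$.

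A natural induction/reduction skeleton would be the following. When $v=p$ is prime, the divisibility condition is vacuous and the statement reduces to Meszka's conjecture, settled by Preissmann and Mischler in \cite{PM}, yielding the base cases. For composite $v$, one peels off from $L$ a sub-list $L_1$ that is admissible for a smaller odd $v_1\mid v$, realizes it inside a single residue class modulo $v/v_1$ (using scaling to turn a matching for $v_1$ into one whose lengths are the corresponding multiples of $v/v_1$), and iterates on the remainder. A complementary direct approach is to place many lengths by means of 1-rotational pairs $\{i,i+\ell\}$ and then to correct multiplicities through local swaps of the form $\{a,b\},\{c,d\}\mapsto\{a,c\},\{b,d\}$, which preserves the underlying set of vertices while altering two edge-lengths.

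As already encountered for $\BHR$, the hard part will be the \emph{extremal} regime, where the bound $(v-d)/2$ is attained for some nontrivial divisor $d$ of $v$. In that case every one of the $d$ residue classes modulo $d$ must carry the maximum possible number of matching edges, so the distribution of the multiples of $d$ is forced and the interaction between blocks corresponding to different divisors becomes extremely rigid, leaving essentially no slack when one tries to simultaneously realize the remaining lengths. Producing an explicit construction (or a clever inductive gadget) that handles these tight cases, while keeping enough freedom in the complementary positions to absorb the rest of $L$, is where the substantive work will lie.
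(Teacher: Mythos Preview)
The statement you are trying to prove is labeled a \emph{Conjecture} in the paper, and the paper explicitly remarks immediately afterward that ``the conjecture remains wide open.'' There is therefore no proof in the paper to compare your proposal against.

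Your necessity argument is correct and is exactly the standard counting observation. Your sufficiency section, however, is not a proof but a programme: you describe a block-concatenation strategy, an inductive peeling along divisors, and local swap corrections, and you yourself identify the extremal regime (where the bound $(v-d)/2$ is tight for some nontrivial $d$) as the place ``where the substantive work will lie.'' That is precisely the gap. None of the three mechanisms you propose is shown to handle those tight instances, and the interaction you point to---that the distribution of multiples of $d$ becomes forced while the remaining lengths must still be placed---is the genuine obstruction that has so far prevented a proof. In short, you have correctly located the difficulty but not overcome it; the sufficiency direction remains open, in your write-up as in the literature.
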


Some partial results have been obtained about this problem, see \cite{PPMeszka,R},
but the conjecture remains wide open. A similar conjecture for $v$ even has been
proposed in a private communication by Michal Adamaszek, see \cite{KP,KC}.

$\BHR$ conjecture is also related to many recent problems on partial sums, see \cite{AL, ADMS,CMPPSums, CP, HOS, O}.
Let $A$ be a finite list of elements of a group $(G,+)$.
Let $(a_1,a_2,\ldots,a_k)$ be an ordering of the elements in $A$ and define the \emph{partial sums}
$s_1,s_2,\ldots,s_k$ by the formula $s_j=\sum\limits_{i=1}^{j} a_i$ $(1\leq j \leq k)$.
It is not hard to see that, even if the
statement in terms of edge-lengths of a Hamiltonian path is more elegant, $\BHR$ conjecture can be formulated also in terms of partial sums of a given list
as follows:
\begin{quote}
Let $v$ be a positive integer and let $L$ be a list of $v-1$ nonzero elements of the cyclic group $(\Z_v,+)$. Then,
there exists a suitable sequence $(\eps_1,\ldots,\eps_{v-1})$, where each $\eps_i=\pm1$, and a suitable ordering
$(a_1,\ldots,a_{v-1})$ of $L$ such that the partial sums of the sequence $(\eps_1a_1,\ldots,\eps_{v-1}a_{v-1})$
are exactly the elements of $\Z_v\setminus\{0\}$ if and only if for any divisor $d$ of $v$ the number of multiples of 
$[d]_v \in \Z_v$ appearing in $L$
does not exceed $v-d$.
\end{quote}
As shown in \cite{PP1}, $\BHR$ conjecture is also related to cyclic decompositions, in fact it can be reformulated as follows.
We refer to \cite{BP} for the concepts necessary to understand the relationship.

\begin{conj}
A Cayley multigraph $\Cay[\Z_v:\Lambda]$ admits a cyclic decomposition into Hamiltonian paths if and only if
$\Lambda=L\cup -L$ with $L$ satisfying condition \eqref{B}.
\end{conj}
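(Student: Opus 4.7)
My plan is to prove the biconditional by establishing the standard correspondence between cyclic Hamiltonian-path decompositions of $\Cay[\Z_v:\Lambda]$ and Hamiltonian paths of $K_v$ with a prescribed list of edge-lengths, and then handling each direction separately.

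For the ($\Rightarrow$) direction, suppose $\Cay[\Z_v:\Lambda]$ admits a cyclic Hamiltonian-path decomposition. The multigraph being undirected forces $\Lambda=-\Lambda$ as multisets, so we may write $\Lambda=L\cup -L$; counting edges gives $v\cdot|\Lambda|/2=v(v-1)$, so $|L|=v-1$. Fix a base path $H$ of the decomposition and use the $\Z_v$-invariance of the decomposition together with a per-length orbit count to conclude $\ell(H)=L$ as multisets. I then verify~\eqref{B} by the classical coset argument: for any divisor $d$ of $v$, the projection $\Z_v\to\Z_v/d\Z_v$ turns the vertex sequence of $H$ into a length-$v$ word over $d$ symbols using each symbol exactly $v/d$ times, hence with at least $d-1$ adjacent-position symbol changes; each such change corresponds to an edge of $H$ whose length is not a multiple of $d$, so at most $(v-1)-(d-1)=v-d$ edges of $H$, i.e.\ at most $v-d$ entries of $L$, are multiples of $d$.

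For the ($\Leftarrow$) direction, assume $L$ is admissible and set $\Lambda=L\cup -L$. I aim to produce a Hamiltonian path $H$ of $K_v$ with $\ell(H)=L$; its $v$ translates $H+x$ with $x\in\Z_v$ then constitute the desired cyclic decomposition. The verification is a direct multiplicity check: the total edge-incidences agree, namely $v(v-1)=v\cdot|\Lambda|/2$; and for each length $\ell$ and each edge of length $\ell$ in $K_v$, the number of translates containing that edge is precisely the multiplicity $m_\ell$ of $\ell$ in $L$, which matches the multiplicity of that edge in $\Cay[\Z_v:\Lambda]$.

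The main obstacle is clearly the ($\Leftarrow$) direction: producing a Hamiltonian path from an admissible list $L$ is exactly the sufficiency claim of the BHR conjecture itself. Thus the above proof is unconditional on one side and reduces the other side to BHR, yielding exactly the reformulated biconditional. The value of presenting the statement in this Cayley-multigraph form is methodological: cyclic decompositions sit naturally in the design-theoretic toolkit, which invites approaches via starters, difference families, or a splitting of $\Lambda$ into sub-multisets whose decompositions can be recombined---tactics that are far less visible when one searches directly for a Hamiltonian path in $K_v$.
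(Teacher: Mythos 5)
This statement is an open conjecture, not a theorem: the paper offers no proof of it, merely asserting (with a citation to \cite{PP1} and \cite{BP}) that it is a reformulation of Conjecture \ref{HR iff B}, and any actual proof would resolve the Buratti--Horak--Rosa conjecture itself. You correctly recognize this, and what you actually prove is the \emph{equivalence} of the Cayley-multigraph statement with $\BHR$: the necessity direction (forcing $\Lambda=L\cup -L$, $|L|=v-1$, and condition \eqref{B} via the mod-$d$ projection argument) is unconditional and correct, and the sufficiency direction is honestly reduced to the existence of a cyclic realization of $L$, i.e.\ to $\BHR(L)$, with the translate-counting verification that the $v$ translates of such a path decompose the multigraph. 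This matches the role the statement plays in the paper. The only details worth tightening are the degenerate cases for $v$ even: the length $v/2$ (where $\lambda=-\lambda$ in $\Z_v$, so the edge-multiplicity bookkeeping in $\Lambda=L\cup -L$ and in the translate count needs a separate line) and the possibility, in the forward direction, that the translation action on the $v$ paths of a cyclic decomposition is not free (a path could in principle be stabilized by the translation by $v/2$), which you implicitly exclude when you speak of ``a base path'' whose orbit is the whole decomposition.
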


We can therefore conclude that $\BHR$ conjecture fits into a wide range of (open) problems and we believe that it is
worthy of being studied  for this reason.

The main aim of this paper is to provide new techniques that can be applied to a vast range of lists.
In fact, in Section \ref{sec:realization} we recall the concepts of cyclic and linear realization and then
we show how linear realizations of two lists $L_1,L_2$ can be manipulated in order to get a linear realization
of $L_1\cup L_2$.
We also introduce two particular classes of linear realizations that allow to build inductive processes.
To give an idea of the flexibility of these new constructions, in Sections \ref{sec:xodd}, \ref{sec:other} and \ref{new}
we obtain the following results. By  $\BHR(L)$ we mean  Conjecture \ref{HR iff B} for a given list $L$.

\begin{thm}\label{main}
Let $x\geq 3$ be an integer and let $L=\{1^a,x^b,(x+1)^c\}$ be an admissible list.
Then $\BHR(L)$ holds in each of the following cases:
\begin{itemize}
\item[(1)] $x$ is odd and $a\geq \min\{3x-3, b+2x-3 \}$;
\item[(2)] $x$ is odd, $a\geq 2x-2$ and $c\geq \frac{4}{3} b$;
\item[(3)] $x$ is even and $a\geq \min\{3x-1, c+2x-1\}$;
\item[(4)] $x$ is even, $a\geq 2x-1$ and $b\geq c$.
\end{itemize}
\end{thm}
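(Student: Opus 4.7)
The plan is to apply the linear realization framework of Section~\ref{sec:realization}. We decompose the admissible list $L=\{1^a,x^b,(x+1)^c\}$ as $L=L_0\cup L'$, where $L_0$ is a short ``seed'' containing all but a controlled number of the $x$'s and $(x+1)$'s, while $L'$ is either a pure block of $1$'s or another easily realizable piece. For $L_0$ we construct a linear realization that lies in one of the two distinguished classes introduced in Section~\ref{sec:realization}; the concatenation and extension lemmas then glue it to a realization of $L'$, yielding a linear realization of $L$, and hence the desired Hamiltonian path of $K_v$.

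\textbf{Step 1 (Base realizations).} For each parity of $x$ we explicitly produce linear realizations of small lists $\{1^{\alpha},x^{\beta},(x+1)^{\gamma}\}$ with the boundary behavior required to belong to one of the two classes. The thresholds $2x-3$, $2x-2$, $2x-1$ appearing in the hypotheses indicate the number of unit steps needed as a ``buffer'' around the $x$- and $(x+1)$-steps, and the split on parity reflects the fact that a run of consecutive $1$-steps followed by an $x$-step interacts differently with the vertex set modulo $2$ in the two parities; this is why the seed is slightly smaller when $x$ is odd.

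\textbf{Step 2 (Inductive extension).} In Cases (1) and (3), where $a$ dominates, we induct on $b+c$: at each step we remove a single $x$-edge or $(x+1)$-edge together with a small number of $1$-edges, apply the inductive hypothesis to the smaller list, and use the extension lemma for our class of realizations to graft the removed edges back. The lower bounds $3x-3$ and $3x-1$ are exactly what is needed to keep $a$ large enough throughout the induction, while the $\min$ with $b+2x-3$ (resp.\ $c+2x-1$) addresses the subcase where $b$ or $c$ is itself comparable to $x$. In Cases (2) and (4), where $a$ is merely of order $2x$, the induction proceeds by removing pairs or small groups of $x$- and $(x+1)$-edges simultaneously; here the ratio constraints $c\geq \tfrac{4}{3}b$ and $b\geq c$ ensure that the minority edge-length is never exhausted before the other, and the asymmetry between the two cases comes from which of $x$, $x+1$ is even.

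\textbf{Main obstacle.} The principal difficulty lies in Step 1: producing explicit orderings of a suitable prefix of $\{0,1,\ldots,v_0-1\}$ with the prescribed difference multiset and simultaneously satisfying the endpoint conditions imposed by class membership. Admissibility must also be verified for each seed, with special care when $x$ or $x+1$ divides the seed order $v_0$, since these are precisely the situations where condition \eqref{B} can fail. Once Step 1 is in place, Steps 2 and 3 reduce largely to careful bookkeeping using the combining lemmas of Section~\ref{sec:realization}.
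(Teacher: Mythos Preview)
Your proposal is a plan, not a proof, and the plan itself contains real gaps beyond the explicit constructions you defer to Step~1.

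The paper's argument is not an induction on $b+c$ that peels off one edge at a time.  Rather, for $x$ odd it first builds, via the operators $\eta_x$ and $\mu_x$ of Definition~\ref{def:AxBx}, standard linear realizations of $\{1^{2x-2},x^s,(x+1)^c\}$ for \emph{every} $c\ge 0$ and every $0\le s\le x-1$ (Proposition~\ref{cor:odd}); the point is that alternating $\eta_x$ and $\mu_x$ pumps up the $(x+1)$-count indefinitely while keeping the $1$- and $x$-counts fixed and preserving the $\A_x/\B_x$ type.  Then $b$ is handled by writing $b=qx+s$ and prefixing a \emph{perfect} realization $R\{1^{x-1},x^{qx}\}$ or $q$ copies of $R\{x^x,(x+1)^{x+1}\}$ (Lemma~\ref{perfette}); this is where the numbers $3x-3$, $b+2x-3$, $\tfrac{4}{3}b$ actually come from.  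The even case is symmetric with the roles of $b$ and $c$ swapped.  Your heuristic that these thresholds ``are exactly what is needed to keep $a$ large enough throughout the induction'' is not what is happening: they are concatenation costs of specific perfect blocks, not artifacts of a one-edge-at-a-time process.

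Your Step~2 also does not decouple $b$ and $c$.  Removing ``a single $x$-edge or $(x+1)$-edge together with a small number of $1$-edges'' via the inverse of $\eta_x$ trades one $1$ for one $x$ and one $x{+}1$ simultaneously; if $b$ and $c$ are far apart you will exhaust one long before the other, and nothing in your outline explains how to land on a valid base case.  The paper avoids this precisely by letting $c$ (resp.\ $b$) be arbitrary in the base realizations and absorbing the \emph{other} parameter with perfect prefixes.  Finally, the admissibility check you flag in your ``Main obstacle'' is a red herring for linear realizations: condition~\eqref{B} only enters at the very end, via Remark~\ref{cyclin}, once the linear realization is reinterpreted cyclically.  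The genuine obstacle is producing the type-$\A_x$ and type-$\B_x$ seeds and verifying that the $\eta_x/\mu_x$ iteration preserves the needed adjacency structure --- and this is exactly the content your proposal omits.
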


\begin{thm}\label{pari}
Let $x\geq 2$ and  $c\geq 1$. Let
$$L=\left\{1^{a}, 2^{b_2}, 4^{b_4}, 6^{b_6},\ldots, \ell^{b_\ell},  x^c \right\}$$
be an admissible list, where $\ell=2\left\lfloor\frac{x-1}{2}\right\rfloor$.
Then $\BHR(L)$ holds in each of the following cases:
\begin{itemize}
 \item[(1)] $x$ is even and $a\geq x-1$;
 \item[(2)] $x$ is odd and $a\geq 3x-4$.
\end{itemize}
\end{thm}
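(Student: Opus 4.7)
The plan is to build a linear realization of $L$, from which a Hamiltonian path of $K_v$ with edge-length list $L$ follows via the standard passage from linear to cyclic realizations available when every length is at most $\lfloor v/2\rfloor$, as recalled in Section~\ref{sec:realization}. Set $L^\circ = L \setminus \{x^c\} = \{1^a, 2^{b_2}, 4^{b_4}, \ldots, \ell^{b_\ell}\}$; every element of $L^\circ$ apart from the $1$s is even and strictly less than $x$.

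The core strategy is to partition $L^\circ$ into $c+1$ sub-lists $M_0, \ldots, M_c$, produce a linear realization of each, and then stitch them together using the $c$ copies of $x$ as bridges via the concatenation lemma from Section~\ref{sec:realization}. Each $M_i$ has the same shape as $L^\circ$, with only $1$s and even lengths below $x$, so it can be realized by induction on the number of distinct even lengths: the base case is a two-length instance $\{1^?, (2t)^?\}$ dispatched by \cite{HR}, and the inductive step strips the largest even length $2k$ in $M_i$ by appending a short block of the form $\{1^{2k-1}, (2k)^s\}$, each peel consuming $2k-1$ ones. The two special classes of linear realizations introduced in Section~\ref{sec:realization} are exactly what is needed to ensure that the $M_i$ can be chosen with endpoints matching the adjacent $x$-bridges.

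The thresholds $a \geq x - 1$ in case (1) and $a \geq 3x - 4$ in case (2) are calibrated so that the total demand of $1$s across all inductive peels and all $c$ bridges can be met. The main obstacle is the combinatorial bookkeeping: distributing the $1$s and the various even-length copies among the $c+1$ sub-lists so that simultaneously (i) each $M_i$ itself satisfies the admissibility required for the inductive procedure, (ii) the endpoints of consecutive sub-paths align so that the $x$-bridges stay inside $\{0, \ldots, v-1\}$ without collision, and (iii) the translations of the sub-paths assemble into a permutation of the full vertex set. The extra stringency in the odd-$x$ bound reflects the fact that when $x$ is odd the largest even length $\ell = x - 1$ is within $1$ of $x$, so stretches dominated by $(x-1)$-edges must be separated from the $x$-bridges by an additional buffer of $1$s to avoid coincidences; this accounts for the roughly $2x$ extra $1$s required in case (2) compared to case (1).
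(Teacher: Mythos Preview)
Your proposal has a genuine gap, and the approach is quite different from the paper's.

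The central problem is quantitative. You partition $L^\circ$ into $c+1$ sub-lists and say each $M_i$ is realized by an induction that ``strips the largest even length $2k$ by appending a short block of the form $\{1^{2k-1},(2k)^s\}$, each peel consuming $2k-1$ ones.'' But summing over the even lengths $4,6,\ldots,\ell$ already costs $3+5+\cdots+(\ell-1)$ ones, which is of order $x^2$, while the hypotheses give you only $a\geq x-1$ or $a\geq 3x-4$. Worse, the bound on $a$ is independent of $c$, yet splitting into $c+1$ pieces each requiring some positive number of $1$s forces $a$ to grow with $c$; for large $c$ most $M_i$ would have to be empty, and then your ``bridges'' reduce to a bare path of $x$-edges whose interaction with the two nonempty blocks you never control. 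The concatenation lemma (Theorem~\ref{th:2lr}) does not take a prescribed bridge length as input, so invoking it for the $x$-joins is also a mismatch. Finally, the special classes $\A_x,\B_x,\C_x$ from Section~\ref{sec:realization} concern adjacency of specific \emph{internal} vertices, not endpoints, so they do not serve the endpoint-matching role you assign them.

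The paper avoids all of this by \emph{not} breaking $L$ along the $x$-edges. For $x$ even it applies Proposition~\ref{all_even} directly: one builds a single realization of type $\C_x$ for the ``thin'' list $\{1^{x-k-1},x_1,\ldots,x_k,x^c\}$ (the $x_i$ being the even lengths with odd multiplicity) and then grows it using the maps $\vartheta_{x},\vartheta_{x-2},\ldots,\vartheta_2$ of Proposition~\ref{prop:sigmax}, each application adding a pair $\{(2j)^2\}$ \emph{without consuming any $1$s}. This is why $x-1$ ones suffice regardless of $c$ or the $b_{2j}$. For $x$ odd the paper writes $c=qx+s$, handles the even part $\{1^{x-2},2^{b_2},\ldots,(x-1)^{b_{x-1}}\}$ via Proposition~\ref{all_even}, handles the $x$-part via the perfect realization $R\{1^{x-1},x^{qx}\}$ of Lemma~\ref{perfette}(1) together with $r\{1^{x-1},x^s\}$ from Lemmas~\ref{odd1}--\ref{odd2}, and then joins the two halves once with Theorem~\ref{th:2lr}. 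The count $(x-2)+(x-1)+(x-1)=3x-4$ explains the threshold in case~(2); your buffer-of-$1$s heuristic for the odd case is not the actual mechanism.
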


\begin{thm}\label{th1234}
Let $L=\{1^a, 2^b, 3^c, 4^d\}$ be an admissible list, where $c,d\geq 1$.
Then $\BHR(L)$ holds for all $a\geq 3$ and all $b\geq 0$. Also,  $\BHR(L)$ holds when $a=2$ and $b\geq 1$.
\end{thm}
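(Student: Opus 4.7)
The plan is to decompose the four-length list $L$ as $L=L_1\cup L_2$ so that each $L_i$ lies within the scope of an earlier result, and then apply the concatenation lemma of Section~\ref{sec:realization} to assemble linear realizations of the two pieces into a single linear realization of $L$. Such a linear realization, read as an ordering of $\{0,1,\ldots,v-1\}$, is automatically a Hamiltonian path of $K_v$ with the required modular edge-length list, so that $\BHR(L)$ holds.

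For the main regime $a\geq 3$, the natural split is
\[
L_1=\{1^{a},2^{b_1},4^{d}\},\qquad L_2=\{2^{b_2},3^{c}\},\qquad b_1+b_2=b.
\]
Since $a\geq 3=x-1$, Theorem~\ref{pari}(1) with $x=4$ applies to $L_1$, and the construction behind that theorem can be arranged to produce a linear realization in one of the two ``special'' classes introduced in Section~\ref{sec:realization}. The list $L_2$ has only two distinct lengths and is handled by the classical two-length result of \cite{HR}. The concatenation lemma then glues the two realizations into a linear realization of $L$.

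To carry this out, $(b_1,b_2)$ must be chosen so that both sublists are admissible in their own smaller ambient cyclic groups. The only nontrivial constraint on $L_1$ is ``$a+b_1\geq 3$ whenever $4\mid(a+b_1+d+1)$'', which is automatic from $a\geq 3$. The only nontrivial constraint on $L_2$ is ``$b_2\geq 2$ whenever $3\mid(b_2+c+1)$''. A short residue analysis modulo~$3$ shows that one can always meet both conditions by an appropriate choice of $(b_1,b_2)$; the rare obstructions when $b$ is very small are removed by transferring a single $1$ into $L_2$ (replacing it by $\{1^{a_2},2^{b_2},3^{c}\}$) and invoking the three-length result for $\{1,2,3\}$ from \cite{CDF}.

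For the boundary case $a=2,\,b\geq 1$, Theorem~\ref{pari}(1) no longer applies, and the two-length sublist $\{2^{b},4^{d}\}$ is not even linearly realizable since all its lengths are even. Here I would combine small explicit base-case constructions with the inductive-class machinery of Section~\ref{sec:realization}: verify by hand that $L$ admits a Hamiltonian path for enough small $(b,c,d)$, and then extend to larger values by inductively inserting four $2$'s, four $3$'s, or four $4$'s at a time, in each case using the inductive-type linear realizations introduced in Section~\ref{sec:realization}. The principal obstacle throughout is not the decomposition idea itself but the verification that the realizations produced by Theorem~\ref{pari} and by the two-length result of \cite{HR} can be arranged in the specific structural form required by the concatenation lemma, i.e.\ that they belong to one of the special classes of Section~\ref{sec:realization}; this amounts to tracing the explicit constructions in those proofs. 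The $a=2$ boundary case is the most delicate and will almost certainly require a self-contained argument rather than a reduction to the generic regime.
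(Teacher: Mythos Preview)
Your overall strategy---decompose $L$ into two sublists each covered by an earlier linear-realization result, then concatenate via Theorem~\ref{th:2lr}---is exactly the paper's approach for the bulk of the cases.  However, the proposal has a conceptual slip and a structural mismatch with what actually works.

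The slip is that you analyze $L_2=\{2^{b_2},3^{c}\}$ in terms of BHR-admissibility (``$b_2\geq 2$ whenever $3\mid(b_2+c+1)$'') and then invoke the two-length result of~\cite{HR}.  But Theorem~\ref{th:2lr} requires a \emph{standard linear} realization of each piece, not a cyclic one, and the two-length theorem of~\cite{HR} produces cyclic realizations.  According to Theorem~\ref{Capp}, the list $\{2^{b_2},3^{c}\}$ has a standard linear realization only in a restricted range (essentially $b_2\geq 4$, $c\geq 3$, plus sporadics), so your split fails for many small values of $b_2$ and $c$, not just ``rare obstructions''.  The paper avoids this by always placing some $1$'s in \emph{both} pieces: for $a\geq 3$ and $b\geq 2$ it uses $L_1=\{1^{2},3^{c}\}$ (Theorem~\ref{Capp}(4)) and $L_2=\{1^{a-2},2^{b},4^{d}\}$ (Theorem~\ref{EJC}), each of which has a standard linear realization with no residue analysis needed.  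Your fallback of ``transferring a single~$1$ into $L_2$'' is moving in this direction, but one $1$ is not always enough (e.g.\ $\{1,2,3^{3k+5}\}$ is an explicit exception in Theorem~\ref{Capp}(6)).

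The second point is that the boundary cases are not merely bookkeeping.  The paper's proof splits into $a\geq 5$ (Corollary~\ref{cor5}, pure concatenation), $a\in\{3,4\}$ (Proposition~\ref{a34}), and $a=2$ (Proposition~\ref{a2}); the latter two rely on Lemma~\ref{6}, which supplies roughly two dozen explicit parametrized families of standard linear realizations such as $r\{1,2,3^{\epsilon},4^{4k+j}\}$ and $r\{1^{2},3^{\epsilon},4^{4k+j}\}$ for various $\epsilon$ and residues $j$.  Your sketch of ``verify by hand for small $(b,c,d)$ and then insert four $2$'s, four $3$'s, or four $4$'s at a time'' does not match what is actually needed: the $\C_x$-type mechanism of Proposition~\ref{prop:sigmax} adds pairs (not quadruples), and there is no established device in Section~\ref{sec:realization} that inserts $3$'s into a realization while preserving a usable type.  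The paper instead constructs the awkward families directly and then concatenates them in carefully chosen combinations.  So for $a\leq 4$ the work is genuinely ad hoc, and your plan does not yet supply it.
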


Finally, in the last section we investigate lists whose underlying set consists of many consecutive elements.

\begin{thm}\label{66}
Let $L = \{ 1^{a_1}, \ldots, m^{a_m} \}$ be an admissible list.
Then $\BHR(L)$ holds whenever
$a_1 \geq a_2 \geq \cdots \geq a_m > 0$.
\end{thm}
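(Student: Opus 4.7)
The plan is to exploit both the monotonicity $a_1 \geq a_2 \geq \cdots \geq a_m$ and the linear realization (LR) framework developed in Section~\ref{sec:realization}. Setting $a_{m+1} = 0$ and $d_j = a_j - a_{j+1} \geq 0$, the list $L$ admits the Young-diagram row decomposition
$$
L \;=\; \bigsqcup_{j=1}^{m} \underbrace{\{1, 2, \ldots, j\} \sqcup \cdots \sqcup \{1, 2, \ldots, j\}}_{d_j \text{ copies}},
$$
one consecutive-interval block for each row of the partition $(a_1,\ldots,a_m)$. Each block $\{1, 2, \ldots, j\}$ is a graceful list: the zig-zag path $0, j, 1, j-1, 2, j-2, \ldots$ realizes $\{1, 2, \ldots, j\}$ on the vertex set $\{0, 1, \ldots, j\}$, and mild variants produce concatenable linear realizations (CLRs) with prescribed endpoints, of the kind introduced in Section~\ref{sec:realization}.

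Given such CLRs for each block, the next step is to apply the concatenation construction from Section~\ref{sec:realization} block by block, starting with the block indexed by the largest $j$ with $d_j > 0$ and working downwards to $j = 1$. Each concatenation shifts the labels of the incoming block and glues its starting vertex onto the current endpoint, increasing the vertex count by exactly $j$. After all blocks are processed one obtains a linear realization of the whole list $L$ on $v = |L| + 1$ vertices, which is precisely a Hamiltonian path of $K_v$ with edge-length multiset $L$. The monotonicity hypothesis is exactly what makes the Young-diagram rows into consecutive intervals $\{1,\ldots,j\}$ rather than lists with gaps, for which no clean graceful realization is available.

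The main obstacle I expect is endpoint bookkeeping along the chain of concatenations. Graceful blocks of different sizes $j$ end at vertices whose identity depends on the parity of $j$, whereas the concatenation lemma typically requires an endpoint at a specified position (for example, $0$ or $v-1$); handling all $j$ uniformly will likely demand several variant CLRs per block together with a case analysis modulo $2$ (and possibly $4$), plus a side argument when $d_j = 0$ forces the chain to skip a length. A secondary difficulty is to show that the admissibility hypothesis~\eqref{B} is actually invoked at the step where a naive concatenation would otherwise fail: since \eqref{B} is exactly the condition preventing parasitic short cycles in $\Z_v$, the argument must use it precisely when a concatenation would wrap around a nontrivial subgroup of $\Z_v$. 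Pinning down this interaction between the monotonicity hypothesis and \eqref{B} is, in my view, the crux of the proof; everything else is bookkeeping on top of the graceful-path building blocks.
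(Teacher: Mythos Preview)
Your decomposition into blocks $\{1,\ldots,j\}$ and the plan to concatenate graceful realizations of these blocks is exactly the paper's approach (Theorem~\ref{th:desc}). Two points, however, deserve correction.

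First, the endpoint bookkeeping you flag as the main obstacle is not an obstacle at all. The paper does not use the zig-zag (Walecki) path and then fight its parity-dependent endpoint; it invokes Kotzig's result (Lemma~\ref{lem:g1}) that a graceful permutation of any length exists with \emph{any} prescribed first element. One then chooses each block's graceful permutation so that its first element equals the last element of the previous block; after translation by the running total of block lengths, the difference at each join is exactly $k_i$, and this single difference is absorbed as one of the copies of $k_i$ in $L$ (so block $i$ for $i<q$ realizes $\{1,\ldots,k_i-1\}$, not $\{1,\ldots,k_i\}$). No case analysis mod~$2$ or mod~$4$ is needed, and the construction is uniform. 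This also means the concatenation here is not the $\ol$ of Theorem~\ref{th:2lr} nor the $+$ for perfect realizations, but a direct juxtaposition of disjoint translated intervals with the join edges counted as part of $L$.

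Second, your expectation that condition~\eqref{B} is the crux is mistaken. Theorem~\ref{th:desc} produces a \emph{linear} realization of $L$ with no hypothesis beyond $a_1\geq\cdots\geq a_m>0$; the divisor condition~\eqref{B} is never invoked. The passage to $\BHR(L)$ then only uses the part of admissibility saying $m\leq\lfloor v/2\rfloor$, via Remark~\ref{cyclin}, so that the linear realization is automatically a cyclic one. There is no ``wrap-around a nontrivial subgroup'' issue to control.
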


\section{Some new methods to  work with linear realizations}\label{sec:realization}

In this section we define cyclic and linear realizations of a list $L$ and
show how they are useful to prove $\BHR(L)$.
A \emph{cyclic realization} of a list $L$ with $v-1$ elements each
from the set
$\{1,\ldots,\lfloor\frac{v}{2}\rfloor\}$ is a Hamiltonian path $[x_0,x_1,\ldots,x_{v-1}]$
of $K_{v}$ such that the list of edge-lengths
$\{\ell(x_i,x_{i+1})\mid i=0,\ldots,v-2\}$ equals $L$. So it is clear that
$\BHR(L)$ can be so reformulated:
 every such a list $L$ has a cyclic realization
if and only if condition \eqref{B} is satisfied.
For example, the path $[0,5,11,3,9,1,8,2,10,4,12,6,7]$ is a cyclic realization of $\{1,5^5,6^6\}$.

Now, let $L$ be a list with $v-1$ positive integers not exceeding
$v-1$.
A \emph{linear realization} of  $L$, denoted by $rL$, is a Hamiltonian path
$[x_0,x_1,\ldots,x_{v-1}]$ of $K_v$
such that $L=\{|x_i-x_{i+1}|\ |\ i=0,\ldots,v-2\}$.
By \emph{standard} linear realization, we mean a linear realization starting with $0$.
For instance, one can easily check that the path $[0,4,1,5,2,3,7,11,8,12,9,6,10,13,17,14,18,15,$ $16,19]$ is a standard
linear realization
of $\{1^2,3^9,4^8\}$.

\begin{rem}\label{cyclin}
Every linear realization of a list $L$ can be viewed as a
cyclic realization of a suitable list $L'$ but not necessarily of the same list. For example, the path
$[0,1,2,7,3,4,5,6]$
 is a linear realization of $L=\{1^5,4,5\}$ and a cyclic realization of $L'=\{1^5,3,4\}$.
Anyway, if all the elements in the list are less than or equal to $\left\lfloor\frac{|L|+1}{2}\right\rfloor$, then every
linear realization of $L$ is
also a cyclic realization of the same list $L$ (see \cite[Section 3]{HR}).
\end{rem}

\subsection{Concatenating two standard linear realizations}

If $v = 2m+1$, let $L^* = \{ 1^2, 2^2, \ldots,$ $m^2 \}$; if $v=2m$, let $L^* = \{ 1^2, 2^2, \ldots, (m-1)^2, m^1 \}$.
Realizations of $L^*$ are also known as \emph{terraces} for $\Z_v$ and have been well-studied; see, for example,~\cite{OllisSurvey}.
The oldest, and most well-known, terrace is the \emph{Walecki Construction} (see~\cite{Alspach08} for more of its history and uses):
$$ [ 0, v-1, 1, v-2,2, \ldots, \lfloor v/2 \rfloor ].$$

Additionally, this is a standard linear realization of $\{ 1, \ldots, v-1 \}$.  Linear realizations of $\{ 1, \ldots, v-1 \}$ are  known as
\emph{graceful permutations} or \emph{graceful labelings of paths}; see~\cite{GallianSurvey} for more on the theory of graceful
labelings.

Now starting from a linear realization $\bm{g}$ we construct three kinds of linear realizations related to $\bm{g}$
as follows.
Let $\bm{g} = [g_1, g_2, \ldots, g_v]$ be a linear realization of a list $L$.  The
\emph{reverse}, $$\rev(\bm{g}) = [g_v, g_{v-1}, \ldots, g_1]$$ is also a linear realization of $L$,
as is its {\em complement}, $$\bm{\bar{g}} = [v-1-g_1,  v-1-g_2, \ldots, v -1 - g_v].$$
The \emph{translation} of  $\bm{g}$  by~$a$ is
$\bm{g} + a = [g_1+ a,  \ldots, g_v+ a ]$, which has the same absolute differences as~$\bm{g}$.

Theorem \ref{th:2lr} generalizes a method from the constructions of Horak and Rosa \cite{HR} that we shall use more widely. 

\begin{thm}\label{th:2lr}
Let~$L_1$ and~$L_2$ be lists.  If each of~$L_1$ and $L_2$ has a standard linear realization, then $L = L_1 \cup L_2$ has a linear realization.
\end{thm}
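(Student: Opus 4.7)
The plan is to build the linear realization of $L$ by concatenating the two given standard realizations after suitably transforming them so that the endpoint of the first matches the starting point of the second and their vertex sets overlap in exactly one point.

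Let $v_1=|L_1|+1$ and $v_2=|L_2|+1$, so that $v=v_1+v_2-1=|L|+1$. Let
$\bm{g}=[0,g_2,\ldots,g_{v_1}]$ be a standard linear realization of $L_1$ and
$\bm{h}=[0,h_2,\ldots,h_{v_2}]$ a standard linear realization of $L_2$. The full path must be a Hamiltonian path on $\{0,1,\ldots,v_1+v_2-2\}$. The natural split is to have the $L_1$-part cover $\{0,1,\ldots,v_1-1\}$ and the $L_2$-part cover $\{v_1-1,v_1,\ldots,v_1+v_2-2\}$; these two vertex sets share precisely the vertex $v_1-1$, which will be the join point of the two pieces.

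For the $L_1$-part, I would replace $\bm{g}$ by $\rev(\bar{\bm{g}})$. Since $\bm{g}$ starts at $0$, its complement $\bar{\bm{g}}$ starts at $v_1-1$; reversing then produces a path that ends at $v_1-1$. As noted just before the theorem, complementing and reversing are both operations that preserve the list of edge-lengths and the underlying vertex set $\{0,\ldots,v_1-1\}$, so $\rev(\bar{\bm{g}})$ is still a linear realization of $L_1$. For the $L_2$-part, translate $\bm{h}$ by $v_1-1$: since translation preserves absolute differences, $\bm{h}+(v_1-1)$ is a linear realization of $L_2$ on the vertex set $\{v_1-1,v_1,\ldots,v_1+v_2-2\}$, and it starts at $v_1-1$.

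Now I would simply glue these two pieces along their common endpoint $v_1-1$, obtaining the sequence
$$
[\,v_1-1-g_{v_1},\ v_1-1-g_{v_1-1},\ \ldots,\ v_1-1-g_2,\ v_1-1,\ h_2+v_1-1,\ \ldots,\ h_{v_2}+v_1-1\,].
$$
This has $v_1+v_2-1$ vertices, and the two halves meet in exactly one vertex, so their vertex sets partition $\{0,1,\ldots,v_1+v_2-2\}$, making the concatenation a Hamiltonian path on that set. The edge-length list is the disjoint union of the lengths contributed by each half, which is $L_1\cup L_2=L$. There is no real obstacle here; the only point to check is that the two transformed paths do share exactly one vertex and meet nowhere else, which is immediate from the observation that the $L_1$-half lives in $\{0,\ldots,v_1-1\}$ while the $L_2$-half lives in $\{v_1-1,\ldots,v_1+v_2-2\}$.
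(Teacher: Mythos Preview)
Your proof is correct and follows essentially the same approach as the paper: you take $\rev(\bar{\bm g})$ so that the first piece ends at $v_1-1$, translate $\bm h$ by $v_1-1$, and concatenate at the shared vertex. The paper's proof is terser but constructs exactly the same sequence.
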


\begin{proof}
Let $\bm{g} = [g_1, \ldots, g_m]$ and $\bm{h} = [h_1, \ldots, h_n]$ be linear realizations of~$L_1$ and~$L_2$
respectively with $g_1 = 0 = h_1$.  Consider the sequence $\bm{g}\ol \bm{h}$ obtained by concatenating
$\rev(\bm{ \bar{g}} )$ and $[h_2, \ldots, h_n] + (m-1)$, which has length~$m+n-1$.

The first $m-1$ absolute differences give the elements of~$L_1$ and, as the $m$th element of the sequence is $m-1 = h_1 + m-1$, the remaining $n -1$ absolute differences give the elements of~$L_2$.  Hence the sequence is a linear realization of~$L$.
\end{proof}

\begin{ex}
Let $\bm{g}=[0,5,3,6,4,1,2]$ and $\bm{h}=[0,3,7,1,5,2,6,4]$ be standard linear realizations of
$L_1=\{1,2^2,3^2,5\}$ and $L_2=\{2,3^2,4^3,6\}$ respectively.
Then $\bm{ \bar{g}}=[6,1,3,0,2,5,4]$ and
$$\bm{g}\ol\bm{h}=[4,5,2,0,3,1,6, 9,13,7,11,8,12,10  ]$$
is a linear realization of $L_1\cup L_2=\{ 1,2^3,3^4,4^3,5,6\}$.
\end{ex}

Note that, in general, the realization constructed in the proof of~Theorem~\ref{th:2lr} is not standard and so cannot be used inductively.
On the other hand some kinds of linear
realizations can be used in an inductive construction, as we are going to show.
Following the terminology  introduced in \cite{CDF}, we will say that a standard linear realization of a list $L$ is \emph{perfect},
and we denote it by $RL$,
if the terminal vertex of the path is labeled by the largest element.
For instance, $[0,1,2,\ldots,a]$ is a perfect linear realization of $\{1^a\}$.
Given a perfect realization
$RL_1=[0,x_1,\ldots,x_{s-1},s]$
and a standard linear realization
$rL_2=[0,y_1,\ldots,y_t]$, not necessarily perfect, we may
form the standard linear realization of
$L_1\cup L_2$ denoted by $RL_1+rL_2$ so defined
$$RL_1+rL_2=[0,x_1,\ldots,x_{s-1},s,y_1+s,\ldots,y_t+s].$$
Note that $RL_1+RL_2$ is perfect.
It is important to underline that the previous construction, in general, does not work if
we consider \emph{cyclic} realizations.

Horak and Rosa~\cite[Theorem 3.1]{HR} prove that $\BHR(L)$ holds for
$L = \{ 1^{a_1}, 2^{a_2}, \ldots, m^{a_m} \}$ where $a_i \leq 2$ for $i >1$  when~$v$ is an odd prime.
This was accomplished by showing that it has a linear realization that is also the appropriate cyclic realization.
We generalize the method of proof used in~\cite{HR}, which already works for all odd~$v$, to give Corollary~\ref{cor:hr_big1},
which includes an analogous result for even~$v$.

\begin{cor}\label{cor:hr_big1}
Let $M^*$ be a sublist of $L^*$ with $|M^*| = a$.  Then $\BHR(L)$ holds for $L = (L^* \setminus M^*) \cup \{ 1^a \}$.
\end{cor}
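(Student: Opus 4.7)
The plan is to construct a standard linear realization of $L=(L^*\setminus M^*)\cup\{1^a\}$ on the vertex set $\{0,1,\ldots,v-1\}$; by Remark~\ref{cyclin}, such a realization is automatically a cyclic realization of~$L$ (every element of $L$ is at most $\lfloor v/2\rfloor$ and $|L|+1=v$), which gives $\BHR(L)$.

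Observe that $L$ has the form $\{1^{a_1},2^{a_2},\ldots,m^{a_m}\}$ with $a_i\le 2$ for every $i>1$ (and $a_m\le 1$ when $v$ is even), since the multiplicities in $L^*\setminus M^*$ cannot exceed those in~$L^*$. This places $L$ exactly in the setting of \cite[Theorem~3.1]{HR}, which proves $\BHR(L)$ for $v$ an odd prime by a direct construction whose argument never uses primality, only oddness, and so extends verbatim to all odd~$v$. A tempting shortcut would be to apply Theorem~\ref{th:2lr} to the decomposition $L=\{1^a\}\cup(L^*\setminus M^*)$, using the perfect realization $R\{1^a\}=[0,1,\ldots,a]$; but this fails for large~$a$, since $L^*\setminus M^*$ can then contain lengths exceeding $v-a-1$ and so admits no standard linear realization on $v-a$ vertices.

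I would therefore imitate the Horak--Rosa construction directly: a zigzag through $\{0,1,\ldots,v-1\}$ whose many $\pm 1$ steps realize the~$1$'s, with a small number of long jumps depositing each non-unit length the required one or two times. The main new content, and the main difficulty, is the case of even~$v$. There $L^*$ contains only a single copy of~$m$, so the symmetric ``out and back'' sub-path that normally realizes two copies of~$m$ must be replaced by an asymmetric single $m$-step, with the rest of the zigzag adjusted to keep the path Hamiltonian. I expect to close the argument with a short case analysis on whether the smallest lengths, especially~$1$, lie in $M^*$, so as to verify that the constructed path visits each vertex of $\{0,\ldots,v-1\}$ exactly once and produces precisely the multiset $L$ of differences.
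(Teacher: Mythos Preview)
Your proposal is not a proof of the even-$v$ case: you only outline a plan (``I would therefore imitate\ldots'', ``I expect to close the argument with a short case analysis'') without actually constructing the path or verifying it is Hamiltonian. The asymmetric single-$m$-step modification you sketch is plausible, but the vertex-by-vertex bookkeeping in such direct zigzag constructions is exactly where errors hide, and you have not carried it out.

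More to the point, you discard Theorem~\ref{th:2lr} too quickly. The naive split $\{1^a\}\cup(L^*\setminus M^*)$ indeed fails for the reason you give, but a smarter split works and is precisely what the paper does. Since every non-unit length in $L$ has multiplicity at most~$2$, separate those multiplicities into two halves: let $x,y$ be the two largest elements of~$L$ (possibly $x=y$) and write $L=L_1\cup L_2\cup L_3$, where each of $L_1,L_2$ contains every non-unit length at most once and is padded with $1$'s so that $|L_1|=x$ and $|L_2|=y$, while $L_3$ holds the remaining $1$'s. Now your obstruction disappears: each $L_i$ has size equal to its own maximum element, so \cite[Lemma~3.13]{HR} (not Theorem~3.1) supplies standard linear realizations $rL_1$ and $rL_2$, and $rL_1\oplus(RL_3+rL_2)$ realizes~$L$. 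This handles odd and even~$v$ uniformly in two lines, with no direct construction or parity case analysis needed.
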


\begin{proof}
Let $x,y$ be the two largest elements of $L$, possibly $x=y$. Write $L = L_1 \cup L_2 \cup L_3$ where:
\begin{itemize}
 \item $L_1 = \{ 1^{a_1}, 2^{a_2}, \ldots, x^{a_x} \}$, with $a_1 = x - \sum\limits_{i=2}^x a_i$ and $a_i \leq 1$ for $i>1$,
 \item $L_2 = \{ 1^{b_1}, 2^{b_2}, \ldots, y^{b_y} \}$, with $b_1 = y - \sum\limits_{i=2}^y b_i$ and $b_i \leq 1$ for $i>1$, and 
 \item $L_3 = \{ 1^{a - a_1 - b_1} \}$.
\end{itemize}

Horak and Rosa~\cite[Lemma~3.13]{HR} show that there are two standard linear realizations $rL_1$ and $rL_2$.
Now, taking a perfect linear realization $RL_3$, we obtain the linear realization $rL_1 \ol ( RL_3+ rL_2)$ of $L$.
\end{proof}

\subsection{Inserting elements to linear realizations of special type}

We finally introduce some particular kinds of linear realizations which turn out to be very useful in the
following sections.

\begin{defi}\label{def:AxBx}
Let $x$ be a positive integer. We say that a linear realization $rL$ of a list $L$ is:
\begin{itemize}
\item of type $\A_x$ if the two vertices $|L|-x$ and $|L|-x+1$ are adjacent in $rL$;
\item of type $\B_x$ if the two vertices $|L|-x$ and $|L|$  are adjacent in $rL$.
\end{itemize}
\end{defi}

Clearly, $\A_1=\B_1$.
The importance of these  types of linear realizations is explained in the following.
We define a function $\eta_x$ acting on linear realizations of type $\A_x$ as follows:
the path $\eta_x(rL)$ is obtained from $rL$ by inserting $|L|+1$ between the adjacent vertices $|L|-x$ and
$|L|-x+1$. It is easy to see that $\eta_x(rL)$ is a linear realization of $(L\setminus \{1\})\cup \{x,x+1 \}$.
Analogously, we define a function $\mu_x$ acting on linear realizations of type $\B_x$ in the following way:
the path $\mu_x(rL)$ is obtained from $rL$ by inserting $|L|+1$ between the adjacent vertices $|L|-x$ and
$|L|$. Note that  $\mu_x(rL)$ is a linear realization of $(L\setminus\{x\})\cup \{1, x+1\}$.
Furthermore, if we apply the function $\eta_x$  or $\mu_x$ to a standard linear realization, we obtain a linear realization
which is still standard.

\begin{ex}
Let $L=\{1^5, 7^3,8\}$ and take $rL=[0,7,8, 1,9, 2,3,4,5,6]$.
Note that $rL$ is of type $\A_7$, since the vertices $|L|-7=2$ and $|L|-7+1=3$ are adjacent, and it is also of type $\B_7$, since the vertices $|L|-7=2$ and $|L|=9$ are adjacent.
Hence, starting from $rL$ and applying the functions $\eta_7$ and $\mu_7$, we obtain, for instance:
$$\begin{array}{rcccl}
r\{1^4,7^4,8^2\} & =& \eta_7(rL) & = &[0,7,8, 1,9, 2,\bm{10},3,4,5,6],\\
r\{1^3,7^5,8^3\} & =& \eta_7(\eta_7(rL))& = &[0,7,8, 1,9, 2,10,3,\bm{11},4,5,6],\\
r\{1^6,7^2, 8^2 \} & = & \mu_7(rL) & = &[0,7,8, 1,9, \bm{10}, 2,3,4,5,6],\\
r\{1^5,7^3, 8^3 \} & =& \eta_7(\mu_7(rL)) & =&[0,7,8, 1,9, 10, 2,3,\bm{11},4,5,6],\\
r\{1^6,7^2, 8^4 \} & =& \mu_7(\eta_7(\mu_7(rL))) & = &[0,7,8, 1,9, 10, 2,3,11,\bm{12},4,5,6],\\
r\{1^5,7^3, 8^5 \} & =& \eta_7(\mu_7(\eta_7(\mu_7(rL)))) & = &[0,7,8, 1,9, 10, 2,3,11,12,4,5,\bm{13},6].\\
\end{array}$$
\end{ex}

\begin{defi}\label{def:Cx}
Let $x$ be a positive even integer. We say that a linear realization $rL$ of a list $L$ is
of type $\C_x$  if the vertices $|L|-(2i+1)$ and  $|L|-2i$ are adjacent for all
$i=0,\ldots,\frac{x-2}{2}$.
\end{defi}

It is also immediate that, if $rL$ is of type $\C_x$, then it is of type $\C_{y}$ for all even integers $2\leq y\leq x$.

\begin{prop}\label{prop:sigmax}
If $rL$ is a linear realization of a list $L$ of type $\C_x$, for some even $x$, then
\begin{itemize}
\item[(1)] $\th_x(rL)=\mu_{x-1} \circ \eta_{x-1}(rL)$ is a linear realization of $L\cup \{x^2\}$ of type $\C_x$;
\item[(2)] $\sigma_x(rL)=\mu_x \circ \eta_{x-1}(rL)$ is a linear realization of $L \cup\{x-1,x+1\}$ of type $\C_x$.
\end{itemize}
\end{prop}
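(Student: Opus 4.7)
The plan is to track, step by step, how $\eta_{x-1}$ and then either $\mu_{x-1}$ or $\mu_x$ change (a) the multiset of edge-lengths and (b) the adjacency structure required for type $\C_x$. Write $n=|L|$, so the vertex set is $\{0,1,\ldots,n\}$; by hypothesis the pairs
$$(n-2i-1,\,n-2i), \qquad i=0,1,\ldots,\tfrac{x-2}{2},$$
are adjacent in $rL$. In particular the pair at $i=\frac{x-2}{2}$ is $(n-x+1,\,n-x+2)$, which is exactly the $\A_{x-1}$-adjacency needed to apply $\eta_{x-1}$.

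First I would apply $\eta_{x-1}$: insert $n+1$ between $n-x+1$ and $n-x+2$. The edge of length $1$ is replaced by edges of lengths $x$ and $x-1$, so the list becomes $(L\setminus\{1\})\cup\{x-1,x\}$. After this insertion the new vertex $n+1$ is simultaneously adjacent to $n-x+1$ and to $n-x+2$, which makes $\eta_{x-1}(rL)$ both of type $\B_{x-1}$ (needed for (1)) and of type $\B_x$ (needed for (2)). For (1), applying $\mu_{x-1}$ inserts $n+2$ between $n-x+2$ and $n+1$, replacing an edge of length $x-1$ by edges of lengths $x$ and $1$; composing the two list-changes gives $L\cup\{x^2\}$. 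For (2), applying $\mu_x$ inserts $n+2$ between $n-x+1$ and $n+1$, replacing an edge of length $x$ by edges of lengths $x+1$ and $1$; composing gives $L\cup\{x-1,x+1\}$.

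It remains to verify that both outputs are of type $\C_x$. Set $m=n+2$; I must check adjacency of $(m-2i-1,\,m-2i)$ for $i=0,\ldots,\frac{x-2}{2}$. The pair at $i=0$ is $(n+1,n+2)$, which is created by the final $\mu$-insertion in both cases (the inserted $n+2$ is placed next to $n+1$). For $i\geq 1$ the required pair is $(n-2(i-1)-1,\,n-2(i-1))$, i.e.\ one of the original $\C_x$-pairs of $rL$ corresponding to indices $0,1,\ldots,\tfrac{x-4}{2}$. Since the only original $\C_x$-pair destroyed by our operations is the one at $i=\tfrac{x-2}{2}$ (broken by $\eta_{x-1}$), and the subsequent $\mu$-step modifies only edges incident to $n+1$, every such pair survives in the final path.

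The main potential pitfall is purely bookkeeping: one must ensure that each of $\eta_{x-1}$, $\mu_{x-1}$, $\mu_x$ is applied at a moment when its required adjacency genuinely holds, and that after each insertion the pairs needed for the type $\C_x$ conclusion are neither destroyed nor accidentally confused with the freshly-created edges. Once the vertex labels of the inserted points ($n+1$ and then $n+2$) and the two affected edges are tracked explicitly, both verifications reduce to the short case analysis above.
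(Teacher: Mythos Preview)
Your argument is correct and follows essentially the same approach as the paper's proof: both verify that $\C_x$ forces $\A_{x-1}$, apply $\eta_{x-1}$, observe that the result is simultaneously of type $\B_{x-1}$ and $\B_x$, and then apply the appropriate $\mu$. Your write-up is in fact more explicit than the paper's, which asserts without details that the output ``is still of type $\mathcal{C}_x$''; your index-shift argument showing that the required pairs $(m-2i-1,m-2i)$ for $i\geq 1$ coincide with the undisturbed original $\C_x$-pairs at indices $0,\ldots,\tfrac{x-4}{2}$, together with the observation that $(n+1,n+2)$ is created by the $\mu$-step, fills that gap cleanly.
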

\begin{proof}
(1) Let $rL$ be a linear realization of a given list $L$ of type $\C_x$.
Then, in particular, $|L|-(x-1)$ and $|L|-(x-2)$ are adjacent in $rL$ and hence it is also a linear realization of type $\A_{x-1}$
so we can apply $\eta_{x-1}$. In this way we get a linear realization of $(L\setminus\{1\})\cup\{x-1,x\}$
that is of type $\B_{x-1}$ since $|L|+1$ and $|L|+1-(x-1)$ are adjacent.
So we can apply $\mu_{x-1}$
obtaining a linear realization of $L\cup\{x^2\}$ which is still of type $\mathcal{C}_x$.

(2) As before, if $rL$ is a linear realization of a given list $L$ of type $\C_x$,
 we can apply $\eta_{x-1}$ and, in this way, we get a linear realization of $(L\setminus\{1\})\cup\{x-1,x\}$
that is of type $\B_x$ since $|L|+1$ and $|L|+1-x$ are adjacent. So we can apply $\mu_x$
obtaining a linear realization of $L\cup\{x-1,x+1\}$. Note that since $x$ is even, we are adding to $L$ two odd consecutive positive integers.
\end{proof}

\begin{ex}
We construct a  linear realization of  $\{1^3,2,3,4^5,5^2, 6^3,7\}$.
Start with the list $L=\{1^3,2,4,6\}$ and its linear realization
$rL=[0,6,5,1,2,4,3]$  of type $\C_6$. Then
$$\begin{array}{rlcl}
L_1=\{1^3,2,4,6^3\}: & rL_1=\vartheta_6(rL) & = & [0,6,5,1,\bm{7},\bm{8}, 2,4,3], \\
L_2=\{1^3,2,4,5,6^3,7\}: & rL_2=\sigma_6 (rL_1) & = &  [0,6, 5,1,7, 8, 2,4,\bm{9},\bm{10},3],\\
L_3=\{1^3,2,4^5,5, 6^3,7\}: & rL_3=\vartheta_4^2(rL_2) & = &   [0,6, 5,1,7,\bm{11},\bm{12}, 8, 2,4, 9,\bm{13},\bm{14}, 10 ,3],\\
\end{array}$$
and then
$$r\{1^3,2,3,4^5,5^2, 6^3,7\}
=\sigma_4(L_3) =[0,6, 5,1,7,11,\bm{16},\bm{15},12, 8, 2,4, 9, 13,14, 10 ,3] .$$
\end{ex}

The constructions we described in this section are very flexible, as we are going to show in the remaining parts of this paper.
In particular, they can be applied for lists whose underlying set is one of the following:
$\{x,y,x+y\}$,  $\{1,2,4,6,\ldots,2x\}$, $\{1,2,4,6,\ldots,2x, 2x+1\}$, $\{1,2,3,\ldots,x\}$.

\section{On the lists whose underlying set is $\{1,x,x+1\}$}\label{sec:xodd}

The results of this section are an example of application of the linear realizations and the functions
$\eta_x$ and  $\mu_x$ introduced in Section \ref{sec:realization}.
In particular, we start considering $\BHR$ conjecture for lists $L=\{1^a,x^b,(x+1)^c\}$, where $x\geq 3$.
Note that if $x=1$, the underlying set of $L$ has size $2$ and hence the validity of $\BHR(L)$ has  already been proved in \cite{HR}.
Also, the case $x=2$  has already been solved in \cite{CDF}.

We will make use of the following perfect linear realizations.

\begin{lem}\label{perfette}
There exists a perfect linear realization of the following lists:
\begin{itemize}
\item[(1)] $\{1^{x-1},x^{qx}\}$ for any odd integer $x$ and any integer $q\geq 0$;
\item[(2)] $\{1^{x-1},x^2,(x+1)^{x-1}\}$ for any even integer $x$;
\item[(3)] $\{x^x, (x+1)^{x+1}\}$ for any integer $x$.
\end{itemize}
\end{lem}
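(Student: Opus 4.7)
The plan is to prove each of (1), (2), (3) by writing down a specific perfect linear realization and then checking, by a direct count, that the multiset of absolute differences equals the prescribed list and that the path begins at $0$ and ends at $|L|$. No manipulation through the operators $\eta_x, \mu_x$ or the concatenation $RL_1 + RL_2$ from Section~\ref{sec:realization} is needed here; controlling the terminal vertex is easier if the path is written down directly.

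For (1) with $q = 0$ the monotone path $[0, 1, 2, \ldots, x-1]$ realizes $\{1^{x-1}\}$. For $q \geq 1$ I would arrange the vertex set $\{0, 1, \ldots, (q+1)x - 1\}$ as a $(q+1) \times x$ array with entry $ix + j$ in row $i$, column $j$, and take the boustrophedon path that runs down column $0$, steps right by $1$ to column $1$, runs up column $1$, steps right, and so on. Every vertical step has length $x$ and every horizontal step has length $1$; since $x$ is odd, column $x - 1$ is traversed downwards and the path terminates at $qx + (x-1) = (q+1)x - 1 = |L|$, yielding $qx$ steps of size $x$ and $x-1$ steps of size $1$. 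For (3), the straight zigzag
$$[0,\, x+1,\, 1,\, x+2,\, 2,\, \ldots,\, x,\, 2x+1]$$
alternates the differences $x+1$ and $x$, starts and ends with $+(x+1)$, and so contributes $x+1$ copies of $x+1$ and $x$ copies of $x$; it ends at $2x+1 = |L|$.

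The main obstacle is (2), because neither the boustrophedon nor the straight zigzag produces the correct splitting of differences into $\{1^{x-1}, x^2, (x+1)^{x-1}\}$ when $x$ is even. The proposal is to begin at $0$ and apply the block of four moves $(+(x+1),\, +1,\, -(x+1),\, +1)$ a total of $(x-2)/2$ times, arriving at vertex $x - 2$ after visiting the lower vertices $0, 1, \ldots, x-2$ and the upper vertices $x+1, x+2, \ldots, 2x - 2$, with tallies of $x-2$ for both the steps of size $x+1$ and the steps of size $1$. I would then append the moves $+(x+1),\, -x,\, +1,\, +x$, which route through the remaining vertices $2x - 1,\, x-1,\, x,\, 2x$ and supply the one missing step of size $x+1$, the two steps of size $x$, and the last step of size $1$. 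The difficulty is purely in the bookkeeping: one has to check that the initial zigzag really visits each claimed vertex exactly once, so that the tail fits without collision, and then tally the step sizes. A short induction on $(x-2)/2$, or equivalently a direct verification of the pattern, handles this cleanly and completes the proof.
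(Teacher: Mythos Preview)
Your proposal is correct and in fact reproduces the paper's proof almost verbatim. Your boustrophedon path in (1) is exactly the sequence the paper writes as $[0\su qx,\ qx+1\giu 1,\ 2\su qx+2,\ \ldots,\ x-1\su qx+x-1]$; your zigzag for (3) is identical to the paper's $[0,x+1,1,x+2,2,\ldots,2x,x,2x+1]$; and your block-then-tail construction for (2) gives precisely the paper's explicit path $[0,x+1,x+2,1,2,\ldots,2x-3,2x-2,x-3,x-2,2x-1,x-1,x,2x]$.
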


\begin{proof}
(1) Fix $x$ and $q\geq 0$: for any integer $i\geq 0$ we denote by $i\su qx+i$ the sequence
$i, x+i, 2x+i,\ldots, qx+i$ and by $qx+i \giu i$ the sequence $qx+i, (q-1)x+i,\ldots, x+i, i$.
Then
$$\begin{array}{rcl}
R\{1^{x-1},x^{qx}\} & =& [0\su qx,qx+1 \giu 1, 2 \su qx+2, qx+3 \giu 3,\ldots, x-3 \su qx+x-3, \\
&&  qx+x-2\giu x-2, x-1\su qx+x-1].
    \end{array}$$
(2) and (3) It suffices to take
$R\{1^{x-1},x^2,(x+1)^{x-1}\}=[0,x+1,x+2, 1,2, x+3,x+4, 3,4,\ldots, x-5,x-4, 2x-3,2x-2,
x-3,x-2, 2x-1,x-1,x,2x ]$ for
$x$ even, and
$R\{x^x, (x+1)^{x+1}\}=[0,x+1,1, x+2,2, x+3,3, \ldots, 2x-1, x-1, 2x, x, 2x+1 ]$.
\end{proof}

\subsection{Case $x$ odd}

In this subsection we investigate $\BHR$ conjecture for lists $\{1^a,x^b,(x+1)^c\}$, where $x\geq 3$ is an odd integer.
We also set $\eta=\eta_x$ and  $\mu=\mu_x$.

\begin{lem}\label{odd0}
Let $x\geq 3$ be an odd integer. Then, for all $c\geq 0$, there exists:
\begin{itemize}
\item[(1)] a standard linear realization of type $\A_x$ of the list $\{1^x, (x+1)^c\}$;
\item[(2)] a standard linear realization of type $\B_x$ of the list $\{1^{x-1},x, (x+1)^c\}$.
\end{itemize}
\end{lem}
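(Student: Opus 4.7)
The plan is to prove (1) and (2) in tandem by induction on $c\ge 0$, producing standard linear realizations $rA_c$ (for part (1)) and $rB_c$ (for part (2)) simultaneously. For the base case $c=0$ I would take $rA_0=[0,1,2,\ldots,x]$, which is plainly a realization of $\{1^x\}$ of type $\A_x$ since $0$ and $1$ are adjacent, and $rB_0=[0,x,x-1,\ldots,1]$, which realizes $\{1^{x-1},x\}$ and is of type $\B_x$ since $0$ and $x$ are adjacent.

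For the inductive step I would set $rB_{c+1}=\eta_x(rA_c)$ and $rA_{c+1}=\mu_x(rB_c)$; by the remark following the definitions of $\eta_x$ and $\mu_x$, both outputs remain standard and realize the intended list. The realization $rB_{c+1}$ is automatically of type $\B_x$ because the inserted vertex $x+c+1$ is placed adjacent to $c+1=|L|'-x$. The delicate point is that $rA_{c+1}=\mu_x(rB_c)$ be of type $\A_x$: this asks that $c+1$ and $c+2$ be adjacent in the output, and since $x\ge 3$ the $\mu_x$ insertion (which only touches the vertices $c$ and $x+c$) leaves this pair untouched, so one needs $c+1$ and $c+2$ to be already adjacent in $rB_c$, a property not forced by type $\B_x$ alone.

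To bridge this gap I would strengthen the induction hypothesis with auxiliary adjacency data: in $rA_c$ the vertices $c+2j$ and $c+2j+1$ are adjacent for each $j=1,\ldots,\frac{x-1}{2}$, and in $rB_c$ the vertices $c+2k+1$ and $c+2k+2$ are adjacent for each $k=0,\ldots,\frac{x-3}{2}$. Both base cases satisfy these since the paths consist of consecutive integers in order. The main obstacle is then checking that these invariants propagate: for $rB_{c+1}=\eta_x(rA_c)$ the required pairs coincide, under the reindexing $j=k+1$, with the invariant pairs of $rA_c$, and they are preserved because (using $x\ge 3$) none of the pair vertices equals $c$, $c+1$ or the new vertex $x+c+1$. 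For $rA_{c+1}=\mu_x(rB_c)$ the $k=0$ pair of the $rB_c$ invariant supplies exactly the type $\A_x$ adjacency $\{c+1,c+2\}$; the higher-index pairs survive the $\mu_x$ insertion between $c$ and $x+c$ by the same disjointness argument; and the last pair $\{c+x,c+x+1\}=\{x+c,x+c+1\}$ required by the new invariant is created for free by the $\mu_x$ insertion itself, since the new vertex $x+c+1$ is placed next to $x+c$. This closes the simultaneous induction and delivers both realizations for every $c\ge 0$.
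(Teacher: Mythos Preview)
Your argument is correct and follows essentially the same approach as the paper: the same two starting paths $[0,1,\ldots,x]$ and $[0,x,x-1,\ldots,1]$ are used, and the realizations are generated by alternating $\eta_x$ and $\mu_x$ exactly as there. The only difference is organizational---the paper runs the two chains separately and leaves the persistence of the $\A_x/\B_x$ types implicit in the explicit form of the paths, whereas you weave them into a single simultaneous induction and make the needed auxiliary adjacencies explicit; this is a matter of presentation, not method.
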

\begin{proof}
Start by considering the following linear realization of $L=\{1^x\}$:
$R L=[0,1,2,\ldots,x]$.
Then, $RL $ is of type $\A_x$, in fact the vertices $|L|-x=0$ and $|L|-x+1=1$ are adjacent, and so we can apply the
function $\eta$:
$$\eta(RL)=[0, \bm{x+1}, 1, 2, 3,\ldots,x],$$
which is a linear realization  of $L'=\{1^{x-1},x, x+1\}$ of type $\B_x$, since the vertices $|L'|-x=1$ and
$|L'|=x+1$ are adjacent.
We now apply the function $\mu$:
$$(\mu\circ\eta)(RL )=[0, x+1, \bm{x+2},1, 2, 3,\ldots,x],$$
obtaining a linear realization  of $L''=\{1^x, (x+1)^2\}$ of type $\A_x$, since $|L''|-x=2$ and $|L''|-x+1=3$ are
adjacent vertices.
Applying alternatively the functions $\eta$ and $\mu$ we produce the following linear realizations of type $\A_x$ and $\B_x$, respectively:
$$r\{1^x, (x+1)^{2\bar c} \}=(\mu \circ \eta)^{\bar c} (RL)\equad r\{1^{x-1}, x, (x+1)^{2\bar
c+1}\}=(\eta\circ(\mu\circ\eta)^{\bar c} )(RL )$$
for all $\bar c\geq 0$.

Consider now the following linear realization of $L=\{1^{x-1},x\}$:
$r L=[0,x,x-1,\ldots,2,1]$.
This is a linear realization of type $\B_x$, as the vertices $|L|-x=0$ and $|L|=x$ are adjacent. So, we can apply the
function $\mu$:
$$\mu(r L)=[0, \bm{x+1}, x, x-1,\ldots,3,2,1],$$
which is a linear realization  of $L'=\{1^{x},x+1 \}$ of type $\A_x$, since the vertices $|L'|-x=1$ and
$|L'|-x+1=2$ are adjacent. We now apply the function $\eta$:
$$(\eta\circ\mu)(r L )=[0, x+1,x,x-1,\ldots,5,4,3,2,\bm{x+2},1],$$
obtaining a linear realization of $L''=\{1^{x-1},x, (x+1)^2\}$ of type $\B_x$ , since $|L''|-x=2$ and $|L''|=x+2$ are
adjacent vertices.
Applying alternatively the functions $\mu$ and $\eta$ we get the following linear realizations of type
$\A_x$ and $\B_x$, respectively:
$$r\{1^x, (x+1)^{2\bar c+1} \}=(\mu\circ(\eta\circ\mu)^{\bar c}) (rL)\equad r\{1^{x-1}, x, (x+1)^{2\bar
c}\}=(\eta\circ\mu)^{\bar c} (rL )$$
for all $\bar c\geq 0$.
\end{proof}

\begin{ex}
We show how it is possible to obtain a  linear realization of type $\A_3$ of $\{1^3,4^5\}$ and
a linear realization of type $\B_3$ of $\{1^2,3,4^6\}$ following the proof of Lemma \ref{odd0}.
Start with the list $L=\{1^2,3\}$ and its linear realization $rL=[0,3,2,1]$ of type $\B_3$. Then
$$\begin{array}{rcccl}
r\{1^3,4\} & = & \mu_3(rL) & =& [0,\bm{4},3,2,1], \\
r\{1^2,3,4^2\} & = & (\eta_3\circ\mu_3)(rL) & =& [0,4,3,2,\bm{5},1], \\
r\{1^3,4^3\} & = & (\mu_3\circ\eta_3\circ\mu_3)(rL) & =& [0,4,3,2,\bm{6}, 5,1], \\
r\{1^2,3,4^4\} & = & (\eta_3\circ\mu_3)^2(rL) & =& [0,4,\bm{7},3,2,6,5,1], \\
r\{1^3,4^5\} & = & (\mu_3\circ(\eta_3\circ\mu_3)^2)(rL) & =& [0, 4,\bm{8},7,3,2,6,5,1], \\
r\{1^2,3,4^6\} & = & (\eta_3\circ\mu_3)^3(rL) & =& [0,4,8,7,3,2,6,\bm{9},5,1].
\end{array}$$
\end{ex}

\begin{lem}\label{odd1}
Let $x\geq 3$ be an odd integer and let $s$ be an even integer such that $2\leq  s\leq x-1$. Then there exists:
\begin{itemize}
\item[(1)] a standard linear realization of type $\A_x$ of the list $\{1^{x-1}, x^s, (x+1)^c\}$ for all even $c\geq 0$;
\item[(2)] a standard linear realization of type $\B_x$ of the list $\{1^{x-2}, x^{s+1}, (x+1)^c\}$ for all odd $c\geq 1$.
\end{itemize}
\end{lem}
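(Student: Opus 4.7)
I plan to mirror the proof of Lemma~\ref{odd0}: exhibit a base standard linear realization of $\{1^{x-1}, x^s\}$ that is simultaneously of types $\A_x$ and $\B_x$, then iterate $\eta=\eta_x$ and $\mu=\mu_x$ to adjoin copies of $x+1$ in pairs.

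Writing $s=2k$, I take the zig-zag path
$$rL^{(0)}=[0,\,x,\,x+1,\,1,\,2,\,x+2,\,x+3,\,3,\,4,\,\ldots,\,x+s-2,\,x+s-1,\,s-1,\,s,\,s+1,\,\ldots,\,x-1].$$
A direct check of successive absolute differences shows this is a standard linear realization of $\{1^{x-1},x^s\}$; since $|L^{(0)}|=x+s-1$, the pair $(s-1,s)$ at the end of the small zig-zag witnesses type $\A_x$ and the transition pair $(s-1,x+s-1)$ witnesses type $\B_x$. This proves case~(1) for $c=0$. A single application of $\eta$ to $rL^{(0)}$ gives a standard linear realization of $\{1^{x-2},x^{s+1},(x+1)^1\}$ of type $\B_x$, proving case~(2) for $c=1$. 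For the remaining $c$ I iterate: $(\mu\circ\eta)^{c'}(rL^{(0)})$ for case~(1) with $c=2c'\ge 2$, and $(\eta\circ\mu)^{c'}(\eta(rL^{(0)}))$ for case~(2) with $c=2c'+1\ge 3$. Tracking the multiset effects of $\eta$ (exchanges a $1$ for one $x$ and one $x+1$) and $\mu$ (exchanges an $x$ for one $1$ and one $x+1$), each compound operator adjoins exactly $\{(x+1)^2\}$, so the lists come out as stated.

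The main obstacle is inductively checking that every $\eta$ and every $\mu$ in the iteration has its required adjacency available and that the declared type persists. The condition for $\mu$ immediately after an $\eta$ is automatic: the $\eta$-inserted vertex $|L|+1$ sits next to $|L|-x+1=(|L|+1)-x$, which is exactly the length-$x$ edge that $\mu$ needs. For the length-$1$ edge required by $\eta$, I enumerate the length-$1$ pairs of $rL^{(0)}$, namely the small zig-zag pairs $(2i-1,2i)$ for $1\le i\le k$, the trailing pairs $(m,m+1)$ for $s\le m\le x-2$, and the large zig-zag pairs $(x+2i,x+2i+1)$ for $0\le i\le k-1$. A short parity computation shows that the target pair $(s+2i-3,s+2i-2)$ of the $\eta$ at iteration~$i$ of $\mu\circ\eta$ lies among these base pairs for $1\le i\le (x+1)/2$, and coincides with the fresh length-$1$ edge $(x+s+2j-2,x+s+2j-1)$ created by the $\mu$ of iteration $j=i-(x+1)/2$ for $i\ge (x+3)/2$. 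Since these two ranges cover every $i\ge 1$, the iteration runs for arbitrary~$c$; preservation of type $\A_x$ at the output of $(\mu\circ\eta)^{c'}$ is equivalent to availability of the target for the hypothetical next iteration, which has just been verified. The case~(2) verification is analogous, with the inner $\eta$ of $\eta\circ\mu$ at iteration~$i$ having target $(s+2i-1,s+2i)$, covered by the base for $1\le i\le (x-1)/2$ and by created pairs for $i\ge (x+1)/2$.
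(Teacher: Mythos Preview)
Your proof is correct and follows essentially the same route as the paper: your base path $rL^{(0)}$ coincides with the paper's $rL=[0,\,U_1,\ldots,U_{s/2},\,s+1,\ldots,x-1]$ (where $U_i=x+2i-2,\,x+2i-1,\,2i-1,\,2i$), and your iteration of $\mu\circ\eta$ for case~(1) and $\eta\circ(\mu\circ\eta)^{\bar c}$ for case~(2) matches the paper's verbatim. The only difference is one of thoroughness: the paper checks that $(\mu\circ\eta)(rL)$ is again of type~$\A_x$ and then asserts the pattern continues, whereas you explicitly enumerate the length-$1$ edges of the base and track which one is consumed at each step, verifying that for $i>(x+1)/2$ the target edge is the one freshly created by the $\mu$ of iteration $i-(x+1)/2$. (Your side remark that $rL^{(0)}$ is also of type~$\B_x$ is true but unused.)
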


\begin{proof}
For $i\geq 1$, let $U_i$ be the sequence $x+2i-2, x+2i-1, 2i-1, 2i$ and  consider the following linear
realization of $L=\{1^{x-1}, x^s\}$, where $2\leq s\leq x-1$ is even:
$$rL=[0,\; U_1,U_2,\ldots,U_{\frac{s}{2}-1}, U_{\frac{s}{2}},\; s+1, s+2, s+3,\ldots,x-1].$$
This realization is  of type $\A_x$, since the vertices $|L|-x = s-1$ and $|L|-x + 1=s$ are adjacent (both in $U_\frac{s}{2}$).
Hence, we can apply the function $\eta$:
$$ \eta(rL ) = [0,\; U_1,\ldots,U_{\frac{s}{2}-1},\; x+s-2, x+s-1, s-1, \bm{x+s}, s,s+1,s+2,\ldots,x-1].$$
Note that by the very particular structure of $L$ it follows that $\eta(rL)$ is a linear realization
of $L'=\{1^{x-2}, x^{s+1}, x+1\}$ of type $\B_x$, in fact the vertices $|L'|-x=s$ and
$|L'|=x+s$ are adjacent.
Hence, we can apply the function $\mu$, obtaining:
$$\begin{array}{rcl}
(\mu\circ\eta)(rL ) &= & [0,\; U_1,\ldots,U_{\frac{s}{2}-1},\; x+s-2, x+s-1, s-1, x+s, \bm{x+s+1},s,\\
&&s+1,s+2,\ldots,x-1].
\end{array}$$
This is a linear realization  of $L''=\{1^{x-1}, x^{s}, (x+1)^2\}$ of type $\A_x$, since
the vertices $|L''|-x=s+1$ and $|L''|-x+1=s+2$ are adjacent.
Applying alternatively the functions $\eta$ and $\mu$ we obtain the following linear realizations of type $\A_x$ and $\B_x$, respectively:
$$r\{1^{x-1}, x^s, (x+1)^{2\bar c} \}=(\mu \circ \eta)^{\bar c} (rL)\equad r\{1^{x-2}, x^{s+1}, (x+1)^{2\bar
c+1}\}=(\eta\circ(\mu\circ\eta)^{\bar c} )(rL )$$
for all $\bar c\geq 0$ and all even $2\leq s\leq x-1$.
\end{proof}

\begin{ex}
We construct a linear realization of  type $\A_5$ of $\{1^4,5^4,6^6\}$ and
a linear realization of type $\B_5$ of $\{1^3,5^5,6^7\}$ following the proof of Lemma \ref{odd1}, Cases (1) and (2), respectively.
Start with the list $L=\{1^4,5^4\}$ and its linear realization $rL=[0,5,6,1,2,7,8,3,4]$ of type $\A_5$. Then
$$\begin{array}{rcccl}
r\{1^3,5^5,6\} & = & \eta_5(rL) & =& [0,5,6,1,2,7,8,3,\bm{9},4], \\
r\{1^4,5^4,6^2\} & = & (\mu_5\circ\eta_5)(rL) & =& [0,5,6,1,2,7,8,3,9,\bm{10},4], \\
r\{1^3,5^5,6^3\} & = & (\eta_5\circ\mu_5\circ\eta_5)(rL) & =& [0,5,\bm{11}, 6,1,2,7,8,3,9,10,4], \\
r\{1^4,5^4,6^4\} & = & (\mu_5\circ\eta_5)^2(rL) & =& [0,5,11,\bm{12},6,1,2,7,8,3,9,10,4], \\
r\{1^3,5^5,6^5\} & = & (\eta_5\circ(\mu_5\circ\eta_5)^2)(rL) & =& [0, 5,11,12,6, 1,2, 7,\bm{13},8,3,9,10,4], \\
r\{1^4,5^4,6^6\} & = & (\mu_5\circ\eta_5)^3(rL) & =& [0, 5,11,12,6, 1,2, 7,13,\bm{14},8,3,9,10,4], \\
 r\{1^3,5^5,6^7\} & = & (\eta_5\circ(\mu_5\circ\eta_5)^3)(rL) & =& [0, 5,11,12,6, 1,2, 7,13,14,8,3,9,\bm{15},10,4].
\end{array}$$
\end{ex}

\begin{lem}\label{odd2}
Let $x\geq 3$ be an odd integer. Then there exists:
\begin{itemize}
\item[(1)] a standard linear realization of type $\B_x$ of the list $\{1^{2x-s}, x^s, (x+1)^c\}$ for all odd $c\geq 1$
and all even $2\leq  s\leq x-1$;
\item[(2)] a standard linear realization of type $\A_x$ of the list $\{1^{2x+1-s}, x^{s-1}, (x+1)^c\}$ for all even $c\geq
2$  and all even $2\leq  s\leq x-1$;
\item[(3)] a standard linear realization of $\{1^{x-1 }, x^s\}$ for all odd $1\leq  s\leq x$.
\end{itemize}
\end{lem}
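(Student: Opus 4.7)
The plan is to follow the scheme established in Lemmas~\ref{odd0} and~\ref{odd1}: exhibit explicit base-case standard linear realizations and then iteratively apply $\eta_x$ and $\mu_x$ to increase the exponent of $x+1$ two at a time while preserving the prescribed type $\A_x$ or $\B_x$. I would handle part~(3) first, since it has no type restriction and furnishes building blocks for the other two parts. The extremal cases $s=1$ (trivial path $[0,x,x-1,\ldots,1]$) and $s=x$ (Lemma~\ref{perfette}(1) with $q=1$) are immediate. For intermediate odd $s=2t+1$ with $1<s<x$, I would write down an explicit Walecki-like path interleaving a descending run of length-$1$ edges with length-$x$ jumps through the upper range $[x,x+s-1]$, with block sizes chosen so that exactly $s$ length-$x$ edges and $x-1$ length-$1$ edges appear. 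Correctness of part~(3) is then a direct count of edge lengths.

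For part~(1) I would start by constructing the base case $c=1$, namely a type-$\B_x$ realization of $\{1^{2x-s},x^s,x+1\}$ for even $s$. The key tool is the concatenation $RL_1+rL_2$ of Section~\ref{sec:realization}: I would pair the perfect realization $R\{1^{x+1-s}\}=[0,1,\ldots,x+1-s]$ with a standard type-$\B_x$ realization of the ``core'' list $\{1^{x-1},x^s,x+1\}$. The type $\B_x$ adjacency involves the two largest vertices of the realization, and under $RL_1+rL_2$ these vertices lie entirely inside (the translate of) $rL_2$; hence the concatenation preserves the type. Once the base is in place, iterating $\eta_x\circ\mu_x$ adds two copies of $x+1$ per step while fixing the $1$- and $x$-counts and preserving type $\B_x$, delivering all odd $c\geq 1$. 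Part~(2) follows symmetrically: applying $\mu_x$ to this $c=1$ realization produces a standard realization of $\{1^{2x+1-s},x^{s-1},(x+1)^2\}$ and, if the base has been arranged so that the vertex inserted by $\mu_x$ lies next to $|L|+2-x$ in the new path, this realization is of type $\A_x$; iterating $\mu_x\circ\eta_x$ then covers all even $c\geq 2$.

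The main obstacle is producing the type-$\B_x$ core realization of $\{1^{x-1},x^s,x+1\}$ for even $s$: this list is not supplied directly by Lemma~\ref{odd1}, whose type-$\B_x$ output has an \emph{odd} exponent of~$x$, so a fresh explicit construction is required. I would design it by modifying the $U$-block pattern in the proof of Lemma~\ref{odd1}(1), shortening the final block so that only $s$ copies of $x$ remain and grafting the lone $x+1$ at the end in a way that forces vertex $|L|$ to sit adjacent to vertex $|L|-x$. After that, all subsequent type-preservation claims in the induction reduce to routine checks of which two vertices lie at positions $|L|-x$, $|L|-x+1$, or $|L|$ after each application of $\eta_x$ or $\mu_x$.
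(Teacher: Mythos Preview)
Your plan for part~(3) is fine and essentially matches the paper's approach. The problem lies in parts~(1) and~(2). Your reduction hinges on the existence of a standard type-$\B_x$ realization of the ``core'' list $\{1^{x-1},x^s,x+1\}$ for even $s$, but this realization need not exist. Take $x=5$, $s=2$: you would need a Hamiltonian path on $\{0,\ldots,7\}$ starting at~$0$, with edge-length multiset $\{1^4,5^2,6\}$, in which $2$ and $7$ are adjacent. A short case analysis on the unique edge incident to~$0$ (which must be $[0,1]$, $[0,5]$, or $[0,6]$) shows that no such path exists: in each case one is forced into a configuration that either gives vertex~$1$ degree~$3$ or leaves some vertex in $\{3,4,5,6\}$ unreachable by the remaining length-$1$ edges. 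So the ``fresh explicit construction'' you allude to cannot be carried out, and the concatenation $R\{1^{x+1-s}\}+rL_2$ strategy breaks down at its base.

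The paper avoids this obstruction by \emph{not} decomposing: it writes down directly an explicit standard realization of the full base list $\{1^{2x-s},x^s,x+1\}$ (on $2x+2$ vertices rather than $x+s+1$), using a block pattern $V_i=(2x+2-2i,\,2x+1-2i,\,x+1-2i,\,x-2i)$ together with two monotone runs. The extra $x+1-s$ copies of~$1$ that you tried to strip off are in fact essential for making the $\B_x$ adjacency $(x+1,2x+1)$ achievable. Once this larger base is in hand, the $\mu_x$/$\eta_x$ alternation proceeds exactly as you describe and yields both~(1) and~(2) simultaneously.
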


\begin{proof}
For $i\geq 1$, let $V_i$ be the sequence $2x+2-2i, 2x+1-2i, x+1-2i, x-2i$ and  consider the following linear
realization of $L=\{1^{2x-s}, x^s, x+1\}$, where $2\leq s\leq x-1$ is even:
$$\begin{array}{rcl}
rL & =& [0,x,x+1, 2x+1,\;  V_1,V_2,\ldots,V_{\frac{s}{2}-1},\;2x-s+2,2x-s+1,2x-s,\ldots, x+2,\\
&& 1,2,3,\ldots,x+1-s].
  \end{array}$$
This realization is of type $\B_x$, since the vertices $|L|-x = x+1$ and $|L|=2x+1$ are adjacent. Hence,
we  can apply the function $\mu$:
$$\begin{array}{rcl}
\mu(rL) & =& [0,x,x+1,\bm{2x+2}, 2x+1,\;  V_1,V_2,\ldots,V_{\frac{s}{2}-1},\;2x-s+2,2x-s+1,\\
&& 2x-s,\ldots, x+2, 1,2,3,\ldots,x+1-s].
  \end{array}$$
Note that by the very particular structure of $L$ it follows that $\mu(rL)$ is a linear realization
 of $L'=\{1^{2x+1-s}, x^{s-1}, (x+1)^2\}$ of type $\A_x$, in fact the vertices $|L'|-x=x+2$ and
 $|L'|-x+1=x+3$ are adjacent.
Hence, we can apply the function $\eta$, obtaining:
 $$\begin{array}{rcl}
 (\eta\circ\mu)(rL ) &= &  [0,x,x+1,2x+2, 2x+1,\;  V_1,V_2,\ldots,V_{\frac{s}{2}-1},\;2x-s+2,2x-s+1,\\
&& 2x-s,\ldots, x+3, \bm{2x+3}, x+2, 1,2,3,\ldots,x+1-s].
 \end{array}$$
This is a linear realization  of $L''=\{1^{2x-s}, x^{s}, (x+1)^3\}$ of type $\B_x$, since
the vertices $|L''|-x=x+3$ and $|L''|=2x+3$ are adjacent.
Applying alternatively the functions $\mu$ and $\eta$ we obtain the following linear realizations of type
$\B_x$ and $\A_x$, respectively:
$$r\{1^{2x-s}, x^s, (x+1)^{2\bar c+1} \}=(\eta \circ \mu)^{\bar c} (rL)\equad r\{1^{2x+1-s}, x^{s-1}, (x+1)^{2\bar
c+2}\}=(\mu\circ(\eta\circ\mu)^{\bar c} )(rL )$$
for all $\bar c\geq 0$ and all even $2\leq s\leq x-1$. This proves (1) and (2).

Finally, to prove (3),  define for all $i\geq 1$ the sequence
$W_i=2i,x+2i,x+2i-1, 2i-1$.
Then, for all odd $1\leq  s\leq x$, we obtain
$$r\{1^{x-1},x^s\}=[0,x,x-1,\ldots,s+1,s,W_{\frac{s-1}{2}},\ldots,W_2,W_1].$$
\end{proof}

\begin{ex}
Following the proof of Lemma \ref{odd2}, Cases (1) and (2), it is possible to obtain a
linear realization  of $\{1^{8},7^6,8^7\}$ of  type $\B_7$ and
a linear realization of $\{1^{9},7^5,8^8\}$ of type $\A_7$, respectively.
Start with the list $L=\{1^8,7^6,8\}$ and its linear realization
$rL=[0,7,8,15,14,13,6,5,12,11,4,3,10,9,1,2]$  of type $\B_7$. Then

\begin{footnotesize}
$$\begin{array}{rcccl}
r\{1^9,7^5,8^2\} & = & \mu_7(rL) & =& [0,7,8,\bm{16},15,14,13,6,5,12,11,4,3,10,9,1,2], \\
r\{1^8,7^6,8^3\} & = & (\eta_7\circ\mu_7)(rL) & =& [0,7,8,16,15,14,13,6,5,12,11,4,3,10,\bm{17},9,1,2], \\
r\{1^9,7^5,8^4\} & = & (\mu_7\circ\eta_7\circ\mu_7)(rL) & =& [0,7,8,16,15,14,13,6,5,12,11,4,3,10,\bm{18},17,9,1,2], \\
r\{1^8,7^6,8^5\} & = & (\eta_7\circ\mu_7)^2(rL) & =& [0,7,8,16,15,14,13,6,5,12,\bm{19},11,4,3,10,18,17,9,1,2], \\
r\{1^9,7^5,8^6\} & = & (\mu_7\circ(\eta_7\circ\mu_7)^2)(rL) & =&
[0,7,8,16,15,14,13,6,5,12,\bm{20},19,11,4,3,10,18,17,9,1,2], \\
r\{1^8,7^6,8^7\} & = & (\eta_7\circ\mu_7)^3(rL) & =&  [0,7,8,16,15,14,\bm{21},13,6,5,12,20,19,11,4,3,10,18,17,9,\\
&&&&1,2], \\
r\{1^9,7^5,8^8\} & = & (\mu_7\circ(\eta_7\circ\mu_7)^3)(rL) & =&
[0,7,8,16,15,14,\bm{22},21,13,6,5,12,20,19,11,4,3,10,18,17,\\
&&&& 9,1,2] .
\end{array}$$
\end{footnotesize}
\end{ex}

\begin{prop}\label{cor:odd}
Let $x\geq 3$ be an odd integer. Then there exists a standard linear realization of $\{1^{2x-2}, x^s, (x+1)^c\}$
for all $c\geq 0$ and all $0\leq s\leq x-1$.
\end{prop}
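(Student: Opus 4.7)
The plan is to reduce $\{1^{2x-2}, x^s, (x+1)^c\}$ to a union $L_1 \cup L_2$ in which $L_1 = \{1^k\}$ for some $0 \leq k \leq x$ and $L_2$ is one of the lists for which Lemmas \ref{odd0}, \ref{odd1}, and \ref{odd2} supply a standard linear realization. Since $RL_1 = [0,1,\ldots,k]$ is a perfect linear realization of $\{1^k\}$, the $RL_1 + rL_2$ construction of Section \ref{sec:realization} then produces a standard linear realization of $L_1 \cup L_2$, which is exactly the target. The degenerate case $k = 0$ means $RL_1 = [0]$, in which case the construction reduces to $rL_2$ itself.

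The case analysis is on $s$ and, when $s \geq 2$, on the parity of $c$. When $s = c = 0$ the trivial path $[0,1,\ldots,2x-2]$ works. When $s = 0$ and $c \geq 1$, take $k = x-2$ and $L_2 = \{1^x, (x+1)^c\}$ from Lemma \ref{odd0}(1). When $s = 1$, take $k = x-1$ and $L_2 = \{1^{x-1}, x, (x+1)^c\}$ from Lemma \ref{odd0}(2). When $s$ is even with $2 \leq s \leq x-1$, take $k = x-1$ and $L_2$ from Lemma \ref{odd1}(1) if $c$ is even, or take $k = s-2$ and $L_2 = \{1^{2x-s}, x^s, (x+1)^c\}$ from Lemma \ref{odd2}(1) if $c$ is odd.

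The remaining range is $s$ odd with $3 \leq s \leq x-2$, which is vacuous when $x = 3$. If $c = 0$, take $k = x-1$ and $L_2 = \{1^{x-1}, x^s\}$ from Lemma \ref{odd2}(3). If $c$ is odd, take $k = x$ and $L_2 = \{1^{x-2}, x^s, (x+1)^c\}$ from Lemma \ref{odd1}(2), applied with internal parameter $s-1$. If $c$ is even and positive, take $k = s-2$ and $L_2 = \{1^{2x-s}, x^s, (x+1)^c\}$ from Lemma \ref{odd2}(2), applied with internal parameter $s+1$. A routine arithmetic check verifies in each case that $\{1^k\} \cup L_2$ is exactly the target list and that the parameters fall within the ranges required by the cited lemmas.

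The main obstacle is the last subcase: when $s$ is odd in $[3, x-2]$ and $c$ is an even positive integer, Lemmas \ref{odd0} and \ref{odd1} alone do not produce a sublist that can be padded with $\{1^k\}$ to reach the target, so Lemma \ref{odd2}(2) is essential precisely here. The rest is bookkeeping.
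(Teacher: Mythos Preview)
Your proof is correct and follows essentially the same approach as the paper: both pad a standard linear realization drawn from Lemmas~\ref{odd0}, \ref{odd1}, and~\ref{odd2} with a perfect realization $R\{1^k\}$, and the case split by parity of $s$ and $c$ matches the paper's items (1)--(3). Your version is simply more explicit about the value of $k$ and the internal parameters in each subcase.
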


\begin{proof}
Using the perfect realization $R\{1^{\bar a} \}=[0,1,\ldots,\bar a]$, for a suitable $\bar a\geq 0$, we have a linear realization of
\begin{itemize}
\item[(1)] $\{1^{2x-2}, (x+1)^c \}$ for all $c\geq 0$ by Lemma \ref{odd0}(1);
\item[(2)] $\{1^{2x-2}, x^s, (x+1)^c \}$ for all $c\geq 0$  and all odd $1\leq s\leq x-2$ by Lemma \ref{odd0}(2), Lemma
\ref{odd1}(2)
and Lemma \ref{odd2}(2),(3);
\item[(3)] $\{1^{2x-2}, x^s, (x+1)^c \}$ for all $c\geq 0$  and all even  $2\leq s\leq x-1$ by Lemma
\ref{odd1}(1) and Lemma \ref{odd2}(1).
\end{itemize}
\end{proof}

\begin{thm}\label{prop:odd}
Let $x\geq 3$ be an odd integer. Then there exists a standard linear realization of $\{1^a,x^b,$ $(x+1)^c\}$ in each of the
following cases:
\begin{itemize}
\item[(1)] $b,c\geq 0$ and  $a\geq 3x-3$;
\item[(2)] $b,c\geq 0$ and  $a\geq b+2x-3$;
\item[(3)] $b\geq 0$, $c\geq \frac{4b}{3}$ and $a\geq 2x-2$.
\end{itemize}
\end{thm}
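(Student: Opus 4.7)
The plan for all three parts of Theorem~\ref{prop:odd} is to reduce to Proposition~\ref{cor:odd} via the concatenation construction $RL_1 + rL_2$ from Section~\ref{sec:realization}, combined with the perfect realizations supplied by Lemma~\ref{perfette}. In each case we decompose $\{1^a, x^b, (x+1)^c\}$ as a multiset union of sublists, each having a perfect or standard linear realization, and then glue them together so that the resulting Hamiltonian path is standard. Note that when two (or more) of the pieces are perfect, their concatenation is again perfect, so the final piece only needs to be standard.

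For part (1), with $a \geq 3x-3$, the idea is to use Euclidean division to write $b = qx+s$ with $0 \leq s \leq x-1$ and split the list as
$$\{1^a, x^b, (x+1)^c\} \;=\; \{1^{a-(3x-3)}\} \,\cup\, \{1^{x-1}, x^{qx}\} \,\cup\, \{1^{2x-2}, x^s, (x+1)^c\}.$$
The first sublist has the trivial perfect realization $[0,1,\ldots,a-3x+3]$; the second has a perfect realization by Lemma~\ref{perfette}(1); and the third has a standard realization by Proposition~\ref{cor:odd}. Iterated concatenation of the form $R+R+r$ then produces the required standard realization.

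For part (2), with $a \geq b+2x-3$, first observe that if $b \geq x$ then $a \geq 3x-3$ and we are already in part (1); so it suffices to handle $b \leq x-1$. If $b \geq 1$ then $a \geq 2x-2$, and the splitting $\{1^{a-(2x-2)}\} \cup \{1^{2x-2}, x^b, (x+1)^c\}$ combined with Proposition~\ref{cor:odd} finishes the argument. If instead $b=0$, the list has only two distinct elements; fall back on Lemma~\ref{odd0}(1) with the splitting $\{1^{a-x}\} \cup \{1^x, (x+1)^c\}$, which is valid since $a \geq 2x-3 \geq x$ for $x \geq 3$.

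For part (3), with $a \geq 2x-2$ and $c \geq \tfrac{4b}{3}$, the plan is induction on $b$. The base case $b \leq x-1$ follows exactly as in part (2) via $R\{1^{a-2x+2}\} + r\{1^{2x-2}, x^b, (x+1)^c\}$ from Proposition~\ref{cor:odd}. For the inductive step $b \geq x$, peel off a copy of Lemma~\ref{perfette}(3) by writing
$$\{1^a, x^b, (x+1)^c\} \;=\; \{x^x, (x+1)^{x+1}\} \,\cup\, \{1^a, x^{b-x}, (x+1)^{c-x-1}\},$$
and apply the inductive hypothesis to the residual list, concatenating the two realizations. The main obstacle is checking that the residual still satisfies the hypothesis of part (3): one must verify $c-x-1 \geq \tfrac{4(b-x)}{3}$ and the nonnegativity $c - x - 1 \geq 0$. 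Both inequalities reduce to $\tfrac{4x}{3} \geq x+1$, i.e.\ to $x \geq 3$, which is \emph{tight} at $x=3$. This delicate boundary is precisely why the ratio $\tfrac{4}{3}$ appears in the hypothesis; any weaker assumption on $c$ would break the induction exactly at the smallest admissible value of $x$.
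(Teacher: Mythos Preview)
Your proof is correct and, in Part~(1), coincides with the paper's argument word for word. Part~(3) is also essentially the paper's proof: where the paper writes $b=qx+s$ and concatenates $q$ copies of $R\{x^x,(x+1)^{x+1}\}$ with $r\{1^{2x-2},x^s,(x+1)^{\bar c}\}$ all at once, you do the same thing inductively, peeling off one copy at a time. Your verification that the residual list still satisfies $c-x-1\geq\frac{4(b-x)}{3}$ is exactly the computation that underlies the paper's assertion that ``we can write $c=q(x+1)+\bar c$ for a suitable $\bar c\geq 0$''.

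The genuinely different piece is Part~(2). The paper handles $a\geq b+2x-3$ directly for all $b$: writing $b=qx+s$, it concatenates $q$ copies of $R\{1^{x-1},x^x\}$ with $r\{1^{2x-2},x^s,(x+1)^c\}$, which consumes $q(x-1)+2x-2$ ones and then checks that $a\geq b+2x-3$ implies $a\geq q(x-1)+2x-2$. You instead observe that $b\geq x$ forces $a\geq 3x-3$, reducing immediately to Part~(1), and then only need Proposition~\ref{cor:odd} for the remaining range $0\leq b\leq x-1$. Your route is shorter and avoids the auxiliary inequality check; the paper's route is more uniform in that it never appeals back to Part~(1). Both are valid.
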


\begin{proof}
If $b=0$ the thesis follows from Lemma \ref{odd0}(1). So fix $b>0$ and $c\geq 0$,  and write $b=qx+s$ with $0\leq s\leq x-1$.
Let $\bm{g}$ be the linear realization of $\{1^{2x-2}, x^s, (x+1)^c \}$ whose existence is proved in Proposition
\ref{cor:odd}. By Lemma \ref{perfette}(1) we obtain
$$r\{1^{3x-3},  x^{b}, (x+1)^c \}=R\{1^{x-1}, x^{qx} \} + r\{1^{2x-2}, x^s, (x+1)^c \}.$$
Hence, there exists a linear realization
$$r\{1^{a}, x^{b},(x+1)^c \} =R\{1^{a+3-3x}\}+ r\{1^{3x-3}, x^{b}, (x+1)^c \}$$
for all $a\geq 3x-3$.
Furthermore, we also get
$$r\{1^{q(x-1)+ 2x-2 },  x^{b },(x+1)^c \}=\underbrace{R\{1^{x-1},x^x\} +\ldots+R\{1^{x-1},x^x\}}_{q \textrm{
times}}+\bm{g}.$$
This proves the existence of a linear realization
$$r\{1^{a}, x^{b},(x+1)^c \} =R\{1^{a+2-q(x-1)-2x}\}+ r\{1^{q(x-1)+2x-2}, x^{b},(x+1)^c \}$$
for all $a\geq b+2x-3$, since in this case $a \geq q(x-1)+2x-2$.

Now, for every $\bar c\geq 0$, let $\bm{h_{\bar c}}$ be the linear realization of $\{1^{2x-2}, x^{s}, (x+1)^{\bar c} \}$,
whose
existence is given by Proposition \ref{cor:odd}.
Then we obtain
$$r\{1^{2x-2},x^{b},  (x+1)^{q(x+1)+\bar c} \}=\underbrace{R\{x^x,(x+1)^{x+1}\} +\ldots+R\{x^x,(x+1)^{x+1} \}}_{q
\textrm{
times}}+ \bm{h_{\bar c}}.$$
If $a\geq 2x-2$ and  $c\geq \frac{4b}{3}$, then we can write $c=q(x+1)+\bar c$, for a suitable $\bar c$, and  take
$$r\{1^{a}, x^{b}, (x+1)^c \} =R\{1^{a+2-2x}\}+ r\{1^{2x-2},  x^{b}, (x+1)^c \}.$$
\end{proof}

\subsection{Case $x$ even}\label{sec:xeven}

In this subsection we investigate $\BHR$ conjecture for lists $\{1^a,x^b,(x+1)^c\}$, where $x\geq 4$ is an even integer.
We also set $\eta=\eta_{x-1}$ and  $\mu=\mu_{x-1}$.

\begin{lem}\label{even1}
Let $x\geq 4$ be an even integer. Then there exists a standard linear realization of type $\A_{x-1}$
of the list $\{1^{x+1}, x^b, (x+1)^s\}$ for all even $b\geq 0$  and all $0\leq  s\leq x$.
\end{lem}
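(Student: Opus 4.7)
The plan is to follow the same template as Lemma~\ref{odd0}: first construct seed realizations for the case $b=0$, then iterate an operator from Section~\ref{sec:realization} to push $b$ through the even values. For the seeds, I would exhibit, for each $s$ with $0 \leq s \leq x$, a standard linear realization of $\{1^{x+1}, (x+1)^s\}$ of type $\mathcal{A}_{x-1}$. The extremal case $s=0$ is immediate from $[0, 1, 2, \ldots, x+1]$, whose vertices $|L|-(x-1) = 2$ and $|L|-x+2 = 3$ are adjacent. For general $1 \leq s \leq x$, I would write the realization as a concatenation of zig-zag blocks each contributing one $(x+1)$-jump together with some unit-jumps, arranged so that the path ends with two consecutive integers whose values are precisely $|L|-x+1$ and $|L|-x+2$. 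A parity-dependent template (for instance, starting $[0, 1, x+2, x+1, \ldots, 2, 2x+3, 2x+2, \ldots]$ for some residues of $s$ and a mild variation for the others) should do the job.

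Having the seeds, I would iterate the operator $\vartheta_x = \mu \circ \eta = \mu_{x-1} \circ \eta_{x-1}$ of Proposition~\ref{prop:sigmax}(1), which adds $\{x^2\}$ to the list without touching the counts of $1$ and $x+1$. Starting from the seed for $(0, s)$ and applying $\vartheta_x$ a total of $b/2$ times produces the target list $\{1^{x+1}, x^b, (x+1)^s\}$. The delicate point is that the resulting realization must still be of type $\mathcal{A}_{x-1}$ so that a further application of $\vartheta_x$ is legal. Because $\vartheta_x$ inserts the two new top vertices $|L|+1, |L|+2$ between the current $\mathcal{A}_{x-1}$-witness pair $|L|-x+1, |L|-x+2$, the \emph{next} witness pair $|L'|-x+1 = |L|-x+3$ and $|L'|-x+2 = |L|-x+4$ must already have been adjacent in the seed; engineering the seeds so that this auxiliary adjacency is present, and so that it is preserved by each $\vartheta_x$-step, is the crux of the bookkeeping.

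The main obstacle is exactly this seed construction. Unlike the odd-$x$ setting, where $\eta$ and $\mu$ individually change the count of $(x+1)$'s and thereby give direct inductive access to both parities of $s$, here $\eta_{x-1}$ alone produces a stray difference $x-1$ that only $\mu_{x-1}$ can remove, so the two functions must always be paired into $\vartheta_x$. Consequently every value of $s$ requires its own seed, and all $x+1$ seeds must simultaneously be standard, realize the prescribed list, witness type $\mathcal{A}_{x-1}$, and carry the propagation-ready adjacency at $|L|-x+3, |L|-x+4$. If a single uniform template does not cover all $s$, I would split into two or three sub-families indexed by $s \bmod 2$ (or $s \bmod (x+1)$) and exhibit a separate explicit construction for each, in the style of the block decompositions used in Lemmas~\ref{odd1} and~\ref{odd2}.
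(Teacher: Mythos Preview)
Your plan is correct and matches the paper's proof almost exactly: the paper too builds, for each $0\le s\le x$, a seed realization of $\{1^{x+1},(x+1)^s\}$ of type $\A_{x-1}$ and then iterates $\mu_{x-1}\circ\eta_{x-1}$ to climb through all even $b$, splitting the seed construction by the parity of $s$ via explicit block formulas (blocks $U_i=x+2i,\,x+2i+1,\,2i,\,2i+1$ for even $s$ and $V_i=2i+2,\,2i+1,\,x+2i+2,\,x+2i+1$ for odd $s$). Your identification of the propagation issue---that the next witness pair $|L|-x+3,\,|L|-x+4$ must already be adjacent and remain so after each step---is precisely the point the paper verifies; note only that the paper does not invoke Proposition~\ref{prop:sigmax}(1) (whose hypothesis is the stronger type $\C_x$) but argues the $\A_{x-1}\to\B_{x-1}\to\A_{x-1}$ cycle directly, which is what you end up doing anyway.
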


\begin{proof}
For $i\geq 1$, let $U_i$ be the sequence $x+2i, x+1+2i, 2i, 2i+1$ and  consider the following linear
realization of $L=\{1^{x+1}, (x+1)^s\}$, where $0\leq s\leq x$ is even:
$$rL=[0,1,\; U_1, U_2,\ldots,U_{\frac{s}{2}},\; s+2, s+3, s+4,\ldots,x,x+1].$$
This realization is  of type $\A_{x-1}$, since the vertices $|L|-(x-1) = s+2$ and $|L|-(x-1) + 1=s+3$ are adjacent. Hence, we
can apply the function $\eta$:
$$\eta(rL) = [ 0,1,\; U_1, U_2,\ldots, U_{\frac{s}{2}},\; s+2,\bm{x+s+2}, s+3, s+4,\ldots,x,x+1  ].$$
Note that by the very particular structure of $L$ it follows that $\eta(rL)$ is a linear realization
 of $L'=\{1^{x}, x-1, x, (x+1)^s\}$ of type $\B_{x-1}$, as the vertices $|L'|-(x-1)=s+3$ and
$|L'|=x+s+2$ are adjacent.
Hence, we can apply the function $\mu$, obtaining:
$$\begin{array}{rcl}
(\mu\circ\eta)(rL) &= & [ 0,1,\; U_1, U_2,\ldots, U_{\frac{s}{2}},\; s+2,x+s+2, \bm{x+s+3}, s+3,\\
&& s+4,s+5,\ldots,x,x+1  ].
\end{array}$$
This is a linear realization of $L''=\{1^{x+1}, x^{2}, (x+1)^s\}$ of type $\A_{x-1}$, since
the vertices $|L''|-(x-1)=s+4$ and $|L''|-(x-1)+1=s+5$ are adjacent.
Applying alternatively the functions $\eta$ and $\mu$ we obtain the following linear realization of type $\A_{x-1}$:
$$r\{1^{x+1}, x^{2\bar b}, (x+1)^{s} \}=(\mu \circ \eta)^{\bar b} (rL)$$
for all $\bar b\geq 0$ and all even $0\leq s\leq x$.

Now, for $i\geq 1$, let $V_i$ be the sequence $2i+2, 2i+1, x+2+2i, x+1+2i$ and  consider the following linear
realization of $L=\{1^{x+1}, (x+1)^s\}$, where $1\leq s\leq x-1$ is odd:
$$rL=[0,1,\; x+2,x+1,\ldots,s+4,s+3,s+2,\;  V_{\frac{s-1}{2}}, \ldots,V_2,V_1,\; 2].$$
This realization is  of type $\A_{x-1}$, since the vertices $|L|-(x-1) = s+2$ and $|L|-(x-1) + 1=s+3$ are adjacent. Hence, we
can apply the function $\eta$:
$$\eta(rL) = [0,1,\; x+2,x+1,\ldots,s+4,s+3,\bm{x+s+2}, s+2,\;  V_{\frac{s-1}{2}}, \ldots,V_2,V_1,\; 2  ].$$
Note that by the very particular structure of $L$ it follows that $\eta(rL)$ is a linear realization
 of $L'=\{1^{x}, x-1, x, (x+1)^s\}$ of type $\B_{x-1}$, as the vertices $|L'|-(x-1)=s+3$ and
$|L'|=x+s+2$ are adjacent.
Hence, we can apply the function $\mu$, obtaining:
$$\begin{array}{rcl}
(\mu\circ\eta)(rL) &= & [0,1,\; x+2,x+1,\ldots,s+5,s+4,s+3,\bm{x+s+3},x+s+2, s+2,\\
&& V_{\frac{s-1}{2}}, \ldots,V_2,V_1,\; 2 ].
\end{array}$$
This is a linear realization  of $L''=\{1^{x+1}, x^{2}, (x+1)^s\}$ of type $\A_{x-1}$, since
the vertices $|L''|-(x-1)=s+4$ and $|L''|-(x-1)+1=s+5$ are adjacent.
Applying alternatively the functions $\eta$ and $\mu$ we obtain the following linear realization of type $\A_{x-1}$:
$$r\{1^{x+1}, x^{2\bar b}, (x+1)^{s} \}=(\mu \circ \eta)^{\bar b} (rL)$$
for all $\bar b\geq 0$ and all odd $1\leq s\leq x-1$.
\end{proof}

\begin{ex}
We construct a  linear realization of  type $\A_3$ of $\{1^{5},4^6,5^3\}$
following the proof of Lemma \ref{even1}.
Start with the list $L=\{1^5,5^3\}$ and its linear realization
$rL=[0,1,6,5,4,3,8,7,2]$  of type $\A_3$. Then
$$\begin{array}{rcccl}
r\{1^4,3,4,5^3\} & = & \eta_3(rL) & =& [0,1,6,\bm{9},5,4,3,8,7,2], \\
r\{1^5,4^2,5^3\} & = & (\mu_3\circ\eta_3)(rL) & =& [0,1,6,\bm{10},9,5,4,3,8,7,2], \\
r\{1^4,3,4^3,5^3\} & = & (\eta_3\circ \mu_3\circ \eta_3)(rL) & =& [0,1,6,10,9,5,4,3,8,\bm{11},7,2], \\
r\{1^5,4^4,5^3\} & = & (\mu_3\circ\eta_3)^2(rL) & =& [0,1,6,10,9,5,4,3,8,\bm{12},11,7,2], \\
r\{1^4,3,4^5,5^3\} & = & (\eta_3\circ(\mu_3\circ\eta_3)^2)(rL) & =& [0,1,6,10,\bm{13},9,5,4,3,8,12,11,7,2], \\
r\{1^5,4^6,5^3\} & = & (\mu_3\circ\eta_3)^3(rL) & =& [0,1,6,10,\bm{14},13,9,5,4,3,8,12,11,7,2].
\end{array}$$
\end{ex}

\begin{lem}\label{even2}
Let $x\geq 4$ be an even integer. Then there exists:
\begin{itemize}
\item[(1)] a standard linear realization of type $\A_{x-1}$ of the list $\{1^{x-1}, x^b \}$ for all $b\geq 0$;
\item[(2)] a standard linear realization of type $\A_{x-1}$ of the list $\{1^{2x-1-s}, x^b, (x+1)^s\}$ for all odd $b\geq 1$
and all even $2\leq s \leq x-2$;
\item[(3)] a standard linear realization of type $\A_{x-1}$ of the list $\{1^{x}, x^b, (x+1)^s\}$ for all odd $b\geq 1$
and all odd $1\leq  s\leq x-1$;
\item[(4)] a standard linear realization of the list $\{1^{x}, x, (x+1)^x \}$ and a linear realization of the list
$\{1^{2x-1}, x^b,(x+1)^x \}$ for all odd $b\geq 3$.
\end{itemize}
\end{lem}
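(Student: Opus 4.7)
The plan is to follow the template of Lemmas \ref{odd1}, \ref{odd2}, and \ref{even1}: for each of the four sublemmas, produce an explicit standard type $\A_{x-1}$ base case with the smallest admissible value of $b$, then iterate the operator $\mu_{x-1}\circ\eta_{x-1}$. By the definitions of $\eta_{x-1}$ and $\mu_{x-1}$, applying $\eta_{x-1}$ removes one copy of $1$ and inserts $x-1$ and $x$, while applying $\mu_{x-1}$ removes one copy of $x-1$ and inserts $1$ and $x$, so the composition adds $\{x^2\}$ to the edge-list; provided the base path has the same local block structure as those in Lemmas \ref{even1} and \ref{odd2}, the type $\A_{x-1}$ adjacency is restored after each application, and the iteration then sweeps out all admissible multiplicities of $x$ of a fixed parity.

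For Part (1) the two base cases are immediate: $[0,1,2,\ldots,x-1]$ realizes $\{1^{x-1}\}$ and has the vertices $0,1$ adjacent, while $[0,x,x-1,\ldots,2,1]$ realizes $\{1^{x-1},x\}$ and has the vertices $2,1$ adjacent. Both are standard and of type $\A_{x-1}$, so iterating $\mu_{x-1}\circ\eta_{x-1}$ covers all even $b$ from the first path and all odd $b$ from the second.

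For Parts (2) and (3) I would build standard type $\A_{x-1}$ base realizations of $\{1^{2x-1-s},x,(x+1)^s\}$ (for even $s$) and of $\{1^{x},x,(x+1)^s\}$ (for odd $s$) by interleaving blocks of four vertices in the spirit of the $U_i$ and $V_i$ sequences of Lemma \ref{even1}, together with a single length-$x$ edge positioned so that the two vertices $|L|-(x-1)$ and $|L|-(x-1)+1$ end up consecutive in the path. The parity of $s$ dictates the orientation of the blocks, so each of Parts (2) and (3) is expected to split into a pair of explicit constructions analogous to the even/odd-$s$ split already visible in Lemma \ref{even1}; iterating $\mu_{x-1}\circ\eta_{x-1}$ then promotes each base case from $b=1$ to arbitrary odd $b$.

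For Part (4), the list $\{1^x,x,(x+1)^x\}$ saturates the admissibility bound on the multiplicity of $x+1$, so only a standard realization (not of a specific type) is required; I would write it down directly as a zig-zag of the form $[0,x+1,1,x+2,\ldots]$ with a single length-$x$ edge spliced in at a convenient spot. For the second half of (4), I would produce a type $\A_{x-1}$ base realization of $\{1^{2x-1},x^3,(x+1)^x\}$ by a parallel block construction and then iterate $\mu_{x-1}\circ\eta_{x-1}$ to reach $\{1^{2x-1},x^{2\bar b+3},(x+1)^x\}$ for every $\bar b\geq 0$. The main obstacle throughout will be the explicit combinatorial design of these base paths in Parts (2), (3), and the $b=3$ piece of (4): the simultaneous demands of standardness, Hamiltonicity on $\{0,1,\ldots,|L|\}$, realizing the exact edge-multiset, and meeting the $\A_{x-1}$ adjacency property leave very little freedom, and will likely force a further subcase split by the parity of $s$ and perhaps by $s\bmod 4$.
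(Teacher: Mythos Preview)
Your strategy for Parts~(1)--(3) is essentially the paper's: a standard type-$\A_{x-1}$ base realization at $b=0$ or $b=1$, followed by iteration of $\mu_{x-1}\circ\eta_{x-1}$ to add $\{x^2\}$ at each step. In fact the paper disposes of Part~(1) in one line by citing Lemma~\ref{odd0}(1) with $x-1$ in place of~$x$, whose proof is exactly your two-base-case argument. For Parts~(2) and~(3) the paper does give single explicit block constructions (sequences $U_i=x+2i-1,x+2i,2i-1,2i$ for even~$s$ and $V_i=2i,2i-1,x+2i,x+2i-1$ for odd~$s$), so your worry about a further split by parity of~$s$ or by $s\bmod 4$ is unfounded: Part~(2) is stated only for even~$s$ and Part~(3) only for odd~$s$, and one formula suffices in each.

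The genuine divergence is in the second half of Part~(4). You propose to build a fresh type-$\A_{x-1}$ base at $b=3$ for $\{1^{2x-1},x^3,(x+1)^x\}$ and iterate---which may well be doable, but is the hardest construction in your whole plan. The paper sidesteps it entirely: it concatenates the perfect realization $R\{1^{x-1},x^2,(x+1)^{x-1}\}$ of Lemma~\ref{perfette}(2) with the standard realization $r\{1^{x},x^{b-2},(x+1)^1\}$ already supplied by Part~(3) of this very lemma (take $s=1$), obtaining $r\{1^{2x-1},x^b,(x+1)^x\}$ directly for every odd $b\geq 3$. This recycling of earlier pieces means no new base case is needed and no $\A_{x-1}$ property has to be verified; your route would work in principle but costs an extra ad~hoc construction that the paper's shortcut renders unnecessary.
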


\begin{proof}
(1) follows from Lemma \ref{odd0}(1).\\
(2) For $i\geq 1$, let $U_i$ be the sequence $x+2i-1,  x+2i,  2i-1, 2i$ and  consider the following linear
realization of $L=\{1^{2x-1-s}, x, (x+1)^s\}$, where $2\leq s\leq x-2$ is even:
$$rL=[0,\; U_1, U_2,\ldots,U_{\frac{s}{2}},\;s+1,s+2,\ldots, x-1,x,\; 2x,2x-1,\ldots, x+s+1].$$
This realization is of type $\A_{x-1}$, since the vertices $|L|-(x-1) = x+1$ and $|L|-(x-1) + 1=x+2$ are adjacent
(both in $U_1$).  Hence, we  can apply the function $\eta$:
$$\begin{array}{rcl}
\eta(rL) & =&  [0,\; x+1, \bm{2x+1}, x+2, 1,2, U_2,\ldots,U_{\frac{s}{2}},\;s+1,s+2,\ldots, x-1,x, \\
&&2x,2x-1,\ldots, x+s+1    ].
\end{array}$$
Note that by the very particular structure of $L$ it follows that $\eta(rL)$ is a linear realization
 of $L'=\{1^{2x-2-s},x-1, x^2, (x+1)^s\}$ of type $\B_{x-1}$, as the vertices $|L'|-(x-1)=x+2$ and $|L'|=2x+1$ are
adjacent. Hence, we can apply the function $\mu$, obtaining:
 $$\begin{array}{rcl}
 (\mu\circ\eta)(rL) &= & [ 0,\; x+1, 2x+1,\bm{2x+2}, x+2, 1,2, U_2,\ldots,U_{\frac{s}{2}},\;s+1,s+2,\ldots, x-1,\\
 && x, 2x,2x-1,\ldots, x+s+1  ].
\end{array}$$
This is a linear realization of $L''=\{1^{2x-1-s}, x^3, (x+1)^s\}$  of type $\A_{x-1}$, since
 the vertices $|L''|-(x-1)=x+3$ and $|L''|-(x-1)+1=x+4$ are adjacent (both in $U_2$ or both at the end of
$(\mu\circ\eta)(rL)$ if $s=2$).
 Applying alternatively the functions $\eta$ and $\mu$ we obtain the following linear realization of type $\A_{x-1}$:
$$r\{ 1^{2x-1-s}, x^{2\bar b+1}, (x+1)^s  \}=(\mu \circ \eta)^{\bar b} (rL)$$
 for all $\bar b\geq 0$ and all even $2\leq s\leq x-2$. \\
(3) Now, for $i\geq 1$, let $V_i$ be the sequence $2i,2i-1, x+2i,x+2i-1$ and  consider the following linear
realization of $L=\{1^{x},x, (x+1)^s\}$, where $1\leq s\leq x-1$ is odd:
$$rL=[0,\; x, x-1,x-2,\ldots,s+3, s+2, \;  V_{\frac{s+1}{2}}, \ldots,V_2,V_1].$$
This realization is  of type $\A_{x-1}$, since the vertices $|L|-(x-1) = s+2$ and $|L|-(x-1) + 1=s+3$ are adjacent.
Hence, we can apply the function $\eta$:
$$\eta(rL) = [0,\; x, x-1,x-2,\ldots,s+3, \bm{x+s+2}, s+2, \;  V_{\frac{s+1}{2}}, \ldots,V_2,V_1   ].$$
Note that by the very particular structure of $L$ it follows that $\eta(rL)$ is a linear realization
 of $L'=\{1^{x-1}, x-1, x^2, (x+1)^s\}$ of type $\B_{x-1}$, as the vertices $|L'|-(x-1)=s+3$ and
$|L'|=x+s+2$ are adjacent.
Hence, we can apply the function $\mu$, obtaining:
$$\begin{array}{rcl}
(\mu\circ\eta)(rL) &= & [0,\; x, x-1,x-2,\ldots,s+5, s+4, s+3, \bm{x+s+3}, x+s+2, s+2,\\
&&  V_{\frac{s+1}{2}}, \ldots,V_2,V_1  ].
\end{array}$$
This is a linear realization  of $L''=\{1^{x}, x^{3}, (x+1)^s\}$ of type $\A_{x-1}$, since
the vertices $|L''|-(x-1)=s+4$ and $|L''|-(x-1)+1=s+5$ are adjacent.
Applying alternatively the functions $\eta$ and $\mu$ we obtain the following linear realization of type $\A_{x-1}$:
$$r\{1^{x}, x^{2\bar b+1}, (x+1)^{s} \}=(\mu \circ \eta)^{\bar b} (rL)$$
for all $\bar b\geq 0$ and all odd $1\leq s\leq x-1$. \\
(4) Take the sequence $U_i$ previously defined. Then
$$[0,U_1,U_2,\ldots,U_{\frac{x}{2}-1},2x-1, x-1, x, 2x+1, 2x]$$
is a linear realization of $\{1^{x}, x, (x+1)^x \}$.
Hence, for all odd $b\geq 3$ we have
$$r\{1^{2x-1} , x^b, (x+1)^x\}= R\{1^{x-1}, x^2, (x+1)^{x-1} \} + r\{1^{x}, x^{b-2}, x+1 \},$$
where the existence of $R\{1^{x-1}, x^2, (x+1)^{x-1}\}$ is proved in Lemma \ref{perfette}(2) and
the existence of $r\{1^{x}, x^{b-2}, x+1 \}$ is showed in the previous item.
\end{proof}

\begin{ex}
We construct a  linear realization of  type $\A_3$ of $\{1^{5},4^7,5^2\}$
following the proof of Lemma \ref{even2}, Case (2).
Consider the list $L=\{1^5,4,5^2\}$ and its linear realization
$rL=[0,5,6,1,2,3,4,8,7]$  of type $\A_3$. Then
$$\begin{array}{rcccl}
r\{1^4,3,4^2,5^2\} & = & \eta_3(rL) & =& [0,5,\bm{9},6,1,2,3,4,8,7], \\
r\{1^5,4^3,5^2\} & = & (\mu_3\circ\eta_3)(rL) & =& [0,5,9,\bm{10},6,1,2,3,4,8,7], \\
r\{1^4,3,4^4,5^2\} & = & (\eta_3\circ \mu_3\circ \eta_3)(rL) & =& [0,5,9,10,6,1,2,3,4,8,\bm{11},7], \\
r\{1^5,4^5,5^2\} & = & (\mu_3\circ\eta_3)^2(rL) & =& [0,5,9,10,6,1,2,3,4,8,\bm{12},11,7], \\
r\{1^4,3,4^6,5^3\} & = & (\eta_3\circ(\mu_3\circ\eta_3)^2)(rL) & =& [0,5,9,\bm{13},10,6,1,2,3,4,8,12,11,7], \\
r\{1^5,4^7,5^2\} & = & (\mu_3\circ\eta_3)^3(rL) & =& [0,1,6,10,13,\bm{14},9,5,4,3,8,12,11,7,2].
\end{array}$$
\end{ex}

\begin{ex}
We construct a  linear realization of  type $\A_5$ of $\{1^{6},6^9,7^3\}$
following the proof of Lemma \ref{even2}, Case (3).
Consider the list $L=\{1^6,6,7^3\}$ and its linear realization
$rL=[0,6,5,4,3,10,9,2,1,8,7]$  of type $\A_5$. Then

\begin{footnotesize}
$$\begin{array}{rcccl}
r\{1^5,5,6^2,7^3\} & = & \eta_5(rL) & =& [0,6,\bm{11},5,4,3,10,9,2,1,8,7], \\
r\{1^6,6^3,7^3\} & = & (\mu_5\circ\eta_5)(rL) & =& [0,6,\bm{12},11,5,4,3,10,9,2,1,8,7], \\
r\{1^5,5,6^4,7^3\} & = & (\eta_5\circ \mu_5\circ \eta_5)(rL) & =&  [0,6,12,11,5,4,3,10,9,2,1,8,\bm{13},7], \\
r\{1^6,6^5,7^3\} & = & (\mu_5\circ\eta_5)^2(rL) & =& [0,6,12,11,5,4,3,10,9,2,1,8,\bm{14},13,7], \\
r\{1^5,5,6^6,7^3\} & = & (\eta_5\circ(\mu_5\circ\eta_5)^2)(rL) & =& [0,6,12,11,5,4,3,10,\bm{15},9,2,1,8,14,13,7], \\
r\{1^6,6^7,7^3\} & = & (\mu_5\circ\eta_5)^3(rL) & =& [0,6,12,11,5,4,3,10,\bm{16},15,9,2,1,8,14,13,7],\\
r\{1^5,5,6^8,7^3\} & = & (\eta_5\circ(\mu_5\circ\eta_5)^3)(rL) & =& [0,6,12,\bm{17},11,5,4,3,10,16,15,9,2,1,8,14,13,7], \\
r\{1^6,6^9,7^3\} & = & (\mu_5\circ\eta_5)^4(rL) & =& [0,6,12,\bm{18},17,11,5,4,3,10,16,15,9,2,1,8,14,13,7].
\end{array}$$
\end{footnotesize}
\end{ex}

From Lemmas \ref{even1} (for $b$ even) and \ref{even2} (for $b$ odd), and arguing as in the proof of
Proposition \ref{cor:odd},  we obtain the following.

\begin{prop}\label{cor:even}
Let $x\geq 4$ be an even integer. Then there exists a standard linear realization of $\{1^{2x-1}, x^b, (x+1)^s\}$
for all $b\geq 0$ and all $0\leq s\leq x$.
\end{prop}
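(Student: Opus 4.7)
The plan is to mirror the strategy used in the proof of Proposition~\ref{cor:odd}: we combine the realizations produced by Lemmas~\ref{even1} and~\ref{even2} with the perfect realization $R\{1^{\bar{a}}\}=[0,1,\ldots,\bar{a}]$ via the $+$-concatenation operation, which preserves the standard property since $R\{1^{\bar a}\}$ begins at $0$. The only work is a case split on the parity of $b$, and within the odd case, on the value of $s$, always choosing $\bar{a}$ so that the $1$s in $R\{1^{\bar{a}}\}$ top up those supplied by the lemma to exactly $2x-1$.

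First I would handle the case $b$ even. Lemma~\ref{even1} directly supplies a standard linear realization of $\{1^{x+1},x^{b},(x+1)^{s}\}$ for every $0\le s\le x$, so
\[
r\{1^{2x-1},x^{b},(x+1)^{s}\}=R\{1^{x-2}\}+r\{1^{x+1},x^{b},(x+1)^{s}\}
\]
is the desired realization.

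Next, for $b$ odd I would split according to $s$. When $s=0$, Lemma~\ref{even2}(1) gives $r\{1^{x-1},x^{b}\}$ and I would take $R\{1^{x}\}+r\{1^{x-1},x^{b}\}$. For $s$ even with $2\le s\le x-2$, Lemma~\ref{even2}(2) gives $r\{1^{2x-1-s},x^{b},(x+1)^{s}\}$ and I would prepend $R\{1^{s}\}$. For $s$ odd with $1\le s\le x-1$, Lemma~\ref{even2}(3) gives $r\{1^{x},x^{b},(x+1)^{s}\}$ and I would prepend $R\{1^{x-1}\}$. For the remaining boundary case $s=x$, I would separate $b=1$ (where I combine $R\{1^{x-1}\}$ with the standard realization of $\{1^{x},x,(x+1)^{x}\}$ from Lemma~\ref{even2}(4)) from $b\ge 3$ odd (where Lemma~\ref{even2}(4) already delivers a linear realization of $\{1^{2x-1},x^{b},(x+1)^{x}\}$, and that realization is standard because it was built as $R\{1^{x-1},x^{2},(x+1)^{x-1}\}+r\{1^{x},x^{b-2},x+1\}$, which starts at $0$).

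There is no genuine obstacle here beyond bookkeeping: every branch uses a single $+$-concatenation of a perfect all-ones realization with a realization drawn from Lemma~\ref{even1} or Lemma~\ref{even2}, and one only has to check that the arithmetic on the number of $1$s lands on $2x-1$ in each branch. The mildly delicate point is the boundary subcase $s=x$, where the length restrictions in Lemma~\ref{even2}(2),(3) do not cover $s=x$ and one must invoke Lemma~\ref{even2}(4) separately for $b=1$ and $b\ge 3$; but once that split is made, the verification is immediate.
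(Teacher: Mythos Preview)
Your proposal is correct and follows essentially the same route as the paper, which simply says ``From Lemmas \ref{even1} (for $b$ even) and \ref{even2} (for $b$ odd), and arguing as in the proof of Proposition \ref{cor:odd}.'' You have merely spelled out the case analysis and the choice of $\bar a$ in each branch, including the mild subtlety at $s=x$ with $b$ odd, where you correctly observe that the realization supplied by Lemma~\ref{even2}(4) for $b\ge 3$ is in fact standard because it is built as a perfect realization $+$ a standard one.
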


\begin{thm}\label{prop:even}
Let $x\geq 4$ be an even integer. Then there exists a standard linear realization of $\{1^a,x^b,(x+1)^c\}$
in each of the following cases:
\begin{itemize}
\item[(1)] $b,c\geq 0$ and  $a\geq 3x-1$;
\item[(2)] $b,c\geq 0$ and  $a\geq c+2x-1$;
\item[(3)] $b\geq c\geq 0$ and $a\geq 2x-1$.
\end{itemize}
\end{thm}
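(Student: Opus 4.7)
The plan is to mirror the structure of the proof of Theorem~\ref{prop:odd}, with Proposition~\ref{cor:even} playing the role of Proposition~\ref{cor:odd} and the perfect realizations from Lemma~\ref{perfette}(2,3) replacing those from Lemma~\ref{perfette}(1,3). First, I dispose of the degenerate subcases $b=0$ and $c=0$ by citing the two-distinct-element result of Horak and Rosa~\cite{HR}, so henceforth I may assume $b,c\geq 1$.

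For case (3), where $b\geq c$ and $a\geq 2x-1$, I write $c=q(x+1)+s$ with $0\leq s\leq x$; the hypothesis $b\geq c$ forces $b\geq qx+q+s\geq qx$, so Proposition~\ref{cor:even} supplies a standard linear realization $\bm{g}$ of $\{1^{2x-1}, x^{b-qx}, (x+1)^s\}$. Concatenating $q$ copies of the perfect realization $R\{x^x,(x+1)^{x+1}\}$ from Lemma~\ref{perfette}(3) with $\bm{g}$ (via the $+$ operator for perfect/standard realizations) yields a standard realization of $\{1^{2x-1}, x^b, (x+1)^c\}$, and prepending $R\{1^{a-(2x-1)}\}$ completes the proof of case (3). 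This is the direct even-$x$ analogue of the $c\geq \frac{4b}{3}$ step in the proof of Theorem~\ref{prop:odd}.

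For cases (1) and (2) I additionally exploit the perfect realization $R\{1^{x-1}, x^2, (x+1)^{x-1}\}$ from Lemma~\ref{perfette}(2). Concretely, the idea is to pick nonnegative integers $p,q$ and a residue $s\in[0,x]$ satisfying $c=(x+1)p+(x-1)q+s$ and $xp+2q\leq b$, and then form the standard realization obtained by concatenating, in order, the perfect pad $R\{1^{a-(2x-1)-q(x-1)}\}$, then $p$ copies of $R\{x^x,(x+1)^{x+1}\}$, then $q$ copies of $R\{1^{x-1}, x^2, (x+1)^{x-1}\}$, and finally the base realization of $\{1^{2x-1}, x^{b-xp-2q}, (x+1)^s\}$ from Proposition~\ref{cor:even}. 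In case (1) the bound $a\geq 3x-1$ keeps $q(x-1)\leq x$ permissible, so taking $q\in\{0,1\}$ together with an appropriate $p$ exhausts all residues of $c$; in case (2) the bound $a\geq c+2x-1$ guarantees room for any $q$ with $(x-1)q\leq c$.

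The main obstacle I anticipate is proving the existence of a valid triple $(p,q,s)$ in case (2) when $b$ is small relative to $c$: the constraint $xp+2q\leq b$ may then be too tight to consume all $(x+1)$'s and force $s>x$. For instance, when $b=1$ only the trivial choice $p=q=0$ is available, which requires $c\leq x$. Such borderline configurations should be handled by falling back on the tailor-made realizations constructed in Lemmas~\ref{even1}--\ref{even2}, which already cover specific small residues of $b$ modulo $2$, analogously to how the odd case invokes Lemmas~\ref{odd0}--\ref{odd2} for its borderline values.
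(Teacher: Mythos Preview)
Your treatment of case~(3) is correct and matches the paper's argument exactly.

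For cases~(1) and~(2), however, there is a genuine gap. Your plan routes everything through the perfect realizations $R\{x^x,(x+1)^{x+1}\}$ and $R\{1^{x-1},x^2,(x+1)^{x-1}\}$, both of which \emph{consume copies of~$x$}. This is what produces the constraint $xp+2q\le b$ that you yourself flag as an obstacle. When $b$ is small (e.g.\ $b=1$, which you have not excluded) and $c>x$, no admissible triple $(p,q,s)$ exists, and your fallback to Lemmas~\ref{even1}--\ref{even2} does not help: those lemmas only produce realizations with $(x+1)$-exponent $s\le x$, so they cannot absorb a large~$c$ on their own. The gap is not a borderline nuisance to be patched case by case; it is structural.

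The idea you are missing is that since $x$ is even, $x+1$ is \emph{odd}, so Lemma~\ref{perfette}(1) applies with $x+1$ in place of~$x$ and yields the perfect realization
\[
R\{1^{x},\,(x+1)^{q(x+1)}\}
\]
for every $q\ge 0$. This block contains no $x$'s at all. The paper's proof simply prepends this to the base realization $\bm{g}=r\{1^{2x-1},x^b,(x+1)^s\}$ from Proposition~\ref{cor:even} (with $c=q(x+1)+s$), giving $r\{1^{3x-1},x^b,(x+1)^c\}$ directly and hence case~(1); replacing the single block $R\{1^{x},(x+1)^{q(x+1)}\}$ by $q$ copies of $R\{1^{x},(x+1)^{x+1}\}$ gives $r\{1^{(q+2)x-1},x^b,(x+1)^c\}$ and hence case~(2). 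No constraint on~$b$ ever arises, and Lemma~\ref{perfette}(2) is not used in this proof at all.
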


\begin{proof}
Fix $b,c\geq 0$ and write $c=q(x+1)+s$ with $0\leq s\leq x$.
Let $\bm{g}$ be the linear realization  of $\{1^{2x-1}, x^b, (x+1)^s \}$ whose existence is proved in Proposition
\ref{cor:even}.
By Lemma \ref{perfette}(1), we
obtain
$$r\{1^{3x-1},  x^{b}, (x+1)^c \}=R\{1^{x}, (x+1)^{q(x+1)} \} + \bm{g}.$$
Hence, there exists a linear realization
$$r\{1^{a}, x^{b},(x+1)^c \} =R\{1^{a-3x+1}\}+ r\{1^{3x-1}, x^{b}, (x+1)^c \}$$
for all $a\geq 3x-1$. Furthermore,
we have
$$r\{1^{(q+2)x-1},  x^{b},(x+1)^{c} \}=\underbrace{R\{1^{x},(x+1)^{x+1}\}
+\ldots+R\{1^{x},(x+1)^{x+1}\}}_{q \textrm{ times}}+\bm{g}.$$
So,  we obtain a linear realization
$$r\{1^{a}, x^{b},(x+1)^c \} =R\{1^{a+1-(q+2)x}\}+ r\{1^{(q+2)x-1}, x^{b},(x+1)^c \}$$
for all $a\geq c+2x-1$, since in this case $a\geq (q+2)x-1$.

Now, for every $\bar b\geq 0$, let $\bm{h_{\bar b}}$ be the linear realization of $\{1^{2x-1}, x^{\bar b}, (x+1)^s\}$, whose
existence is given by Proposition \ref{cor:even}.
Then we obtain
$$r\{1^{2x-1 },x^{qx+\bar b},  (x+1)^{c} \}=\underbrace{R\{x^x,(x+1)^{x+1}\} +\ldots+R\{x^x,(x+1)^{x+1} \}}_{q \textrm{
times}}+\bm{h_{\bar b}},$$
where the existence of $R\{x^x,(x+1)^{x+1}\}$ is proved in Lemma \ref{perfette}(3).
If $a\geq 2x-1$ and  $b\geq c$, then we can write $b=qx+\bar b$, for a suitable $\bar b$, and  take
$$r\{1^{a}, x^{b},(x+1)^c \} =R\{1^{a-2x+1}\}+ r\{1^{2x-1},  x^{b}, (x+1)^c \}.$$
\end{proof}

\subsection{Main result}\label{Concl}

We are now ready to prove our main result for this kind of list.

\begin{proof}[Proof of Theorem \rm{\ref{main}}]
Let $x\geq 3$, $L=\{1^a,x^b,(x+1)^c\}$ with $a,b,c\geq 0$ and set $v=a+b+c+1$.
Since $x+1\leq  \left \lfloor \frac{v}{2}\right\rfloor$, every linear realization of $L$ is also a cyclic realization of the same list,
see Remark \ref{cyclin}.
Hence, the validity of $\BHR$ follows from Theorems \ref{prop:odd} and \ref{prop:even}.
\end{proof}

To conclude this section, we explain how starting from this result it is possible to obtain
lists of the form $\{y^a,z^b,(y+z)^c\}$ for which $\BHR$ conjecture holds.
Let $a,b,c\geq 0$, $v=a+b+c+1$ and $1\leq y,z,t\leq \left \lfloor \frac{v}{2}\right\rfloor$
such that $t\equiv y+z\pmod v$. If $\gcd(y,v)=1$ we can multiply by $y^{-1}$ all the elements
of the list $L'=\{y^a,z^b,t^c\}$, where $y^{-1}$ denotes the unique integer $w$ such that $1\leq w \leq v-1$
and $yw\equiv 1\pmod v$. We then obtain the list $\bar{L}=\{1^a,x^b,(x+1)^c\}$, where $x\equiv y^{-1}z\pmod v$ and $1\leq x \leq v-1$.
Let $L$ be the list obtained from $\bar{L}$ by replacing each integer $i> \left \lfloor \frac{v}{2}\right\rfloor$ in
$\bar{L}$,
with $v-i$.
The validity of $\BHR(L)$ for the cases described in Theorem \ref{main} implies the validity of $\BHR(L')$
for the same choice of $a,b,c$ since, multiplying the vertices of a Hamiltonian path of $K_v$ by an element coprime with
$v$, we obtain again a Hamiltonian path of $K_v$, for details see \cite[Lemma 2]{DJ}.
Clearly, the same argument holds if $\gcd(z,v)=1$.
In particular, if $v$ is a prime, namely if we consider the original Buratti's conjecture,
both conditions $\gcd(y,v)=1$ and $\gcd(z,v)=1$ are satisfied.

For instance, consider the list $L'=\{13^{20}, 28^{3}, 41^{65} \}$.
Then $v=89$ and multiplying by $48$ we get the list
$L=\{1^{20}, 9^{3}, 10^{65}\}$.
By Theorem \ref{main}(2), $\BHR(L)$ holds and so  $\BHR(L')$ also holds.
Now,  consider the list $L'=\{13^{20}, 19^{80}, 32^{10} \}$.
Then $v=111$ and multiplying by $94$ we get the list
$L=\{1^{20}, 10^{80}, 11^{10}\}$.
Applying Theorem \ref{main}(4), $\BHR(L)$ holds and then also $\BHR(L')$ holds.

\begin{cor}
Let $a,b,c\geq 0$, $v=a+b+c+1$ and $1\leq y,z,t\leq \left \lfloor \frac{v}{2}\right\rfloor$
such that $t\equiv y+z\pmod v$. Suppose that $\gcd(y,v)=1$ or $\gcd(z,v)=1$.
If $a,b,c$ satisfy one of the conditions given in Theorem \rm{\ref{main}}, then $\BHR(L)$ holds for the
list $L=\{y^a,z^b,t^c\}$.
\end{cor}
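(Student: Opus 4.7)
The plan is to follow verbatim the translation argument sketched in the paragraph immediately preceding the corollary, but to write it out as a clean proof. Set $v = a+b+c+1$ and $L = \{y^a, z^b, t^c\}$, assuming (by symmetry in the hypotheses) that $\gcd(y,v) = 1$. Let $w$ be the unique integer with $1 \le w \le v-1$ satisfying $yw \equiv 1 \pmod v$. The key tool is \cite[Lemma~2]{DJ}: if $[x_0, x_1, \ldots, x_{v-1}]$ is a Hamiltonian path of $K_v$, then $[wx_0, wx_1, \ldots, wx_{v-1}]$ (reduced mod $v$) is also a Hamiltonian path of $K_v$, because multiplication by the unit $w$ permutes $\Z_v$. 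Moreover, the edge-length of $[wx_i, wx_{i+1}]$ in $K_v$ is $\min(|w(x_i-x_{i+1}) \bmod v|, v - |w(x_i-x_{i+1}) \bmod v|)$, so multiplying by $w$ sends a list $M$ of edge-lengths to the list $wM$ (where each entry $m$ is replaced by $\min(wm \bmod v, v - (wm \bmod v))$).

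First, I compute $wL$. We have $wy \equiv 1$, so $y$ maps to $1$; set $x$ to be the representative of $wz$ in $\{1, \ldots, \lfloor v/2 \rfloor\}$ after the $v-i$ folding. Since $t \equiv y+z \pmod v$, we have $wt \equiv 1 + wz \pmod v$, so $t$ maps to the folded representative of $1+wz$. A short case analysis on whether $wz \bmod v$ is at most $\lfloor v/2 \rfloor - 1$, equals $\lfloor v/2 \rfloor$, or is larger, shows that after folding the two lengths coming from $z$ and $t$ are consecutive integers $x$ and $x+1$ (possibly after swapping their roles, which only interchanges the exponents $b$ and $c$). Hence $wL$, after folding, is a list of the form $L' = \{1^a, x^b, (x+1)^c\}$ (or $\{1^a, x^c, (x+1)^b\}$, which is handled by the same theorem since all four cases of Theorem~\ref{main} are symmetric in $b,c$ under the appropriate hypotheses, or one simply re-labels).

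Next, I check admissibility is preserved: the condition \eqref{B} depends only on the multiset of residues $\{[\ell]_v : \ell \in L\}$ up to multiplication by a unit, because multiplication by $w$ permutes the divisors of $v$ and sends multiples of $d$ to multiples of $d$. Thus $L$ admissible implies $L'$ admissible. Now I invoke Theorem~\ref{main}: under any of the four hypotheses on $(a,b,c)$, the list $L'$ admits a cyclic realization, i.e.\ a Hamiltonian path $H'$ of $K_v$ with $\ell(H') = L'$. Multiplying the vertices of $H'$ by $y$ (the inverse of $w$ mod $v$) produces a Hamiltonian path $H$ of $K_v$ whose edge-length list is, by the same computation, $L$. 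This establishes $\BHR(L)$.

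The only step that requires care is the folding: one must verify that $wz$ and $w(y+z) = 1 + wz$, once reduced mod $v$ and folded into $\{1, \ldots, \lfloor v/2 \rfloor\}$, really produce two consecutive values $x, x+1$ rather than, say, $x$ and $v-x-1$. This is a short case check depending on the size of $wz \bmod v$ relative to $\lfloor v/2 \rfloor$; in each case the pair of folded lengths is $\{x, x+1\}$ for some $x$ with $1 \le x \le \lfloor v/2 \rfloor - 1$, possibly with the roles of the exponents $b$ and $c$ interchanged. Since Theorem~\ref{main} in its four cases covers both assignments (e.g.\ cases (1)--(2) for $x$ odd and (3)--(4) for $x$ even, with explicit control over which of $b,c$ is larger), the conclusion $\BHR(L')$ transfers to $\BHR(L)$. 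This completes the proof.
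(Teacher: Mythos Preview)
Your proof follows exactly the translation argument sketched in the paragraph preceding the corollary—multiply by $y^{-1}$ to reach a list of the form $\{1^a,x^b,(x+1)^c\}$, apply Theorem~\ref{main}, and multiply back via \cite[Lemma~2]{DJ}—so the approach matches the paper's.

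Two small corrections. First, multiplication by a unit $w$ does \emph{not} permute the divisors of $v$; what is true (and what you also state) is that it permutes the multiples of each fixed divisor $d$ inside $\Z_v$, which is all that is needed for admissibility to transfer. Second, your claim that the four cases of Theorem~\ref{main} ``cover both assignments'' of $b$ and $c$ after a possible swap is false: e.g.\ swapping $b\leftrightarrow c$ in case~(2) yields $b\geq\frac{4}{3}c$, which is neither case~(1) nor case~(2), and moreover folding replaces $x$ by $v-x-1$, which may change parity. The paper avoids this by leaving the corollary informal: the hypothesis ``$a,b,c$ satisfy one of the conditions'' is to be read for whichever $x$ and whichever assignment of exponents actually arises after the multiplication and folding (as in the two worked examples), so no symmetry claim is required. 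With that reading, your argument is complete.
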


\section{On the lists with almost all even elements}\label{sec:other}

In this section we consider lists whose elements are all even integers plus the element $1$, and at most another odd integer.
Our constructions are based on linear realizations of type $\C_x$.
Here, given two nonnegative integers $x,y$, we denote
by $x\su y$ the sequence $x,x+1,x+2,\ldots,y-1, y$ if $x\leq y$, and the sequence
$x,x-1,x-2,\ldots,y+1,y$, otherwise.

\begin{prop}\label{all_even}
Let $t\geq 2$ be an even integer and suppose that $c\geq 1$.
For every choice of $b_2,b_4,\ldots,b_{t-2}\geq 0$
there exists a standard linear realization of
$$L=\left\{1^{t-1}, 2^{b_2}, 4^{b_4}, 6^{b_6},\ldots, (t-2)^{b_{t-2}},  t^c \right\}.$$
\end{prop}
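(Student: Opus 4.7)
The plan is to realize the target list $L$ by starting from a small \emph{seed} list $L_0 \subseteq L$ that admits a standard linear realization of type $\C_t$, then inflating each multiplicity by repeatedly applying the operation $\vartheta_{2i} = \mu_{2i-1} \circ \eta_{2i-1}$ from Proposition~\ref{prop:sigmax}. Each $\vartheta_{2i}$ adjoins two edges of length $2i$ to the edge-list while keeping the number of $1$-edges fixed. It preserves type $\C_{2j}$ for every $j \le i$, since the insertion takes place at vertices $|L|-(2i-1)$ and $|L|-(2i-2)$, which after the index shift lie outside the adjacencies required by the new $\C_{2j}$; however, for $j > i$ that same insertion destroys an adjacency still required by the new $\C_{2j}$. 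Consequently, the $\vartheta$ operations must be performed in \emph{decreasing} order of index.

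Define $\epsilon_{2i} = b_{2i} \bmod 2 \in \{0,1\}$ for $i = 1, \ldots, t/2 - 1$, and let $c_0 \in \{1,2\}$ satisfy $c \equiv c_0 \pmod 2$. The first task is to exhibit a standard linear realization of type $\C_t$ of the seed list
\[
L_0 = \{1^{t-1},\, 2^{\epsilon_2},\, 4^{\epsilon_4}, \ldots, (t-2)^{\epsilon_{t-2}},\, t^{c_0}\}.
\]
The simplest seeds are immediate: $[0, t, t-1, \ldots, 1]$ realizes $\{1^{t-1}, t\}$ and $[0, t, t+1, 1, 2, \ldots, t-1]$ realizes $\{1^{t-1}, t^2\}$, both of type $\C_t$; and when exactly one $\epsilon_{2i}$ equals $1$ with $c_0 = 1$, the path $[0, 2i, 2i+1, \ldots, t+1, 1, 2, \ldots, 2i-1]$ realizes $\{1^{t-1}, 2i, t\}$ of type $\C_t$. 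For a general subset $S = \{2i : \epsilon_{2i} = 1\}$, I would build the seed inductively on $|S|$, at each step inserting into a previously-constructed seed a short detour that contributes exactly one additional edge of the required new length $2i$ (together with $1$-edges) while preserving the adjacencies $(|L_0|-(2k+1), |L_0|-2k)$ for $0 \le k \le t/2-1$ that witness type $\C_t$.

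With the seed $rL_0$ in hand, the rest is mechanical. Apply $\vartheta_t$ exactly $(c - c_0)/2$ times: this yields a type-$\C_t$ standard linear realization of $\{1^{t-1}, 2^{\epsilon_2}, \ldots, (t-2)^{\epsilon_{t-2}}, t^c\}$. Then, for $i = t/2-1, t/2-2, \ldots, 1$ in decreasing order, apply $\vartheta_{2i}$ exactly $(b_{2i} - \epsilon_{2i})/2$ times. By the ordering observation above, at the moment of each $\vartheta_{2i}$ the current realization still has type $\C_{2i}$, so the application is valid and increases the multiplicity of $2i$ by $2$. After the final iteration the edge-list equals $L$, and the path remains standard.

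The main obstacle is the seed construction for arbitrary $S$: one must simultaneously control the edge-length multiset (each $2i \in S$ appearing exactly once, exactly $t-1$ copies of the $1$-edge, and $c_0$ copies of $t$) and the $t/2$ consecutive-integer adjacencies that witness type $\C_t$. A careful inductive argument on $|S|$, inserting one detour at a time into a previously-built seed, should suffice; once the seeds are built, the subsequent inflation via the $\vartheta$ operations is routine.
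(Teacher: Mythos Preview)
Your plan --- build a type-$\C_t$ seed containing one copy of each even element whose target multiplicity is odd, then inflate the remaining even part via $\vartheta_t,\vartheta_{t-2},\ldots,\vartheta_2$ in decreasing index order --- is exactly the paper's strategy, and your reasoning for why decreasing order is forced is correct. The genuine gap is the seed: for general $S$ you offer only ``a careful inductive argument \ldots\ should suffice'', and the local-modification scheme you sketch is not obviously workable, because every detour you insert changes $|L_0|$ and hence shifts \emph{all} $t/2$ of the adjacency requirements that witness type $\C_t$; you give no mechanism for tracking them through the induction.

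The paper sidesteps this by making the seed \emph{smaller}, not larger. With $k=|S|$ it puts only $t-k-1$ ones (not $t-1$) into the seed, so that the seed list has size $t$ (or $t-1$, when $c$ is even) and the seed path lives on exactly $\{0,1,\ldots,t\}$ (respectively $\{0,1,\ldots,t-1\}$). On that minimal vertex set there is a closed-form construction: list the odd-exponent indices as $x_1>x_2>\cdots>x_k$, set $S_0=0$ and $S_j=\sum_{i\le j}(-1)^{i+1}x_i$, and assemble the path from monotone runs whose endpoints are governed by consecutive $S_j$'s (four explicit formulas, according to the parities of $c$ and $k$). The result is visibly of type $\C_t$. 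The $k$ ones that were removed from the seed are not fed through the $\vartheta$ machinery at all; they are restored only at the very end via $rL = R\{1^k\} + r(L_1\cup L_2)$. This trade --- drop $k$ ones from the seed to get an explicit formula on a minimal vertex set, then prepend $R\{1^k\}$ afterwards --- is precisely the idea your argument is missing.
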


\begin{proof}
Suppose that the exponents $b_{x_1},b_{x_2},\ldots,b_{x_k}$ are all odd integers, while the exponents $b_{y_1},
b_{y_2},\ldots, b_{y_h}$ are all even.
Write $L= L_1 \cup L_2\cup\{1^{k}\}$, where
$$L_1 =\left\{1^{t-k-1}, x_1,x_2, \ldots, x_k,t^c\right\}\equad L_2=\{2^{2\bar b_2}, 4^{2\bar
b_4},\ldots,(t-2)^{2\bar b_{t-2}} \}.$$
It will be convenient (and not restrictive) to assume $x_1>x_2>\ldots>x_k$. Define $S_0=0$ and $S_j=\sum\limits_{i=1}^j (-1)^{i+1} x_i$
for $j=1,\ldots,k$. We split the proof into two cases according to the parity of $c$.

Suppose that $c$ is odd. Write $c=2\bar c+1$ and define $L_1'=\left\{1^{t-k-1}, x_1,x_2, \ldots, x_k,t\right\}$.
We construct the following linear realizations of $L_1'$:
if $k=0$, take $r L_1' =[0,t\giu 1]$;
if $k>0$ is even, take
$$\begin{array}{rcl}
rL_1'& =&[0, t\giu 1+S_1,  1+S_0\su S_2, S_1\giu 1+S_3,\ldots,1+S_{k-4}\su S_{k-2},\\
&&S_{k-3}\giu 1+S_{k-1},  1+S_{k-2}\su S_k, S_{k-1}\giu 1+S_k];
\end{array}$$
if $k$ is odd, take
$$\begin{array}{rcl}
rL_1'& =&[0, t\giu 1+S_1,  1+S_0\su S_2, S_1\giu 1+S_3,\ldots,1+S_{k-3}\su S_{k-1}, S_{k-2}\giu 1+S_{k}, \\
&& 1+S_{k-1}\su S_k].
\end{array}$$
Note that, in all cases, the vertices $2i+1$ and $2i+2$ are adjacent for all $i=0,\ldots,\frac{t-2}{2}$.
This implies that $rL_1'$ is of type $\C_t$. So, by Proposition \ref{prop:sigmax}(1), we can apply the function $\th_t^{\bar c}$, obtaining
$rL_1=(\th_t)^{\bar c}(rL_1')$.

Next, suppose that $c$ is even. We construct the following linear realizations of $L_1'=\left\{1^{t-k-1}, x_1,x_2,
\ldots, x_k\right\}$:
if $k=0$, take $r L_1' =[0\su t-1]$;
if $k>0$ is even, take
$$\begin{array}{rcl}
rL_1'& = & [0\su t-1-S_1,  t-1-S_0\giu t-S_2, t-S_1\su t-1-S_3,\ldots, \\
&& t-1-S_{k-4} \giu t- S_{k-2}, t-S_{k-3}\su t-1-S_{k-1},
t-1-S_{k-2}\giu t-S_k, \\
&& t-S_{k-1}\su t-1-S_k];
\end{array}$$
if $k$ is odd, take
$$\begin{array}{rcl}
rL_1'& = & [0\su t-1-S_1,  t-1-S_0\giu t-S_2, t-S_1\su t-1-S_3,\ldots, \\
&& t-1-S_{k-3} \giu t- S_{k-1}, t-S_{k-2}\su t-1-S_{k},
t-1-S_{k-1}\giu t-S_k].
\end{array}$$
Note that, in all cases, the vertices $2i$ and $2i+1$ are adjacent for all $i=0,\ldots,\frac{t-2}{2}$.
This implies that $rL_1'$ is of type $\C_t$. So, by Proposition \ref{prop:sigmax}(1), we can apply the function $\vartheta_t^{\bar c}$,
obtaining
$rL_1=(\th_t)^{\bar c}(rL_1')$, where $c=2\bar c$.

Hence, for all values of $c\geq 1$, we were able to construct a linear realization of $rL_1$ of type $\C_t$.
So, we can apply $\th_{t-2}, \th_{t-4},\ldots,\th_2$ in this order:
$$ r(L_1\cup L_2)  = ((\th_{2})^{\bar b_2} \circ (\th_{4})^{\bar b_4}\circ \ldots\circ
(\th_{t-2})^{\bar b_{t-2}})(rL_1).$$
Finally, we get $rL=R\{1^k\}+r(L_1\cup L_2)$.
\end{proof}

\begin{ex}
Let $L=\{1^{11}, 2^2, 4^5, 6^6, 8^3, 12^3\}$.
Following the proof of Proposition \ref{all_even}, we write $L=\{1^2\}\cup L_1 \cup L_2$,
where $L_1=\{1^{9}, 4, 8,  12^3 \}$ and $L_2=\{2^2,4^4,6^6,8^2 \}$. Hence,
$rL=R\{1^2\}+r(L_1\cup L_2)$, $x_1=8$, $x_2=4$, $t=12$ and
$$\begin{array}{rcl}
rL'_1=r\{1^{9},4,8, 12\}  & = & [0, 12,11,10,9,  1,2,3,4,  8,7,6,5 ],\\
rL_1=r\{1^{9},4,8, \bm{12^3} \}  & = & [0, 12,11,10,9,  1,\bm{13}, \bm{14}, 2,3,4,  8,7,6,5 ],\\
r\{1^{9},4,\bm{8^3}, 12^3 \}  & = & [0, 12,11,10,9,  1, 13, 14,  2,3,4,  8,\bm{16}, \bm{15}, 7,6,5 ],\\
r\{1^{9},4, \bm{6^6}, 8^3, 12^3 \}  & = & [0, 12,\bm{18}, \bm{17}, 11,10,9,  1, 13, \bm{19}, \bm{20}, 14,  2,3,4,
8,16,\bm{22},\bm{21}, 15,\\
&&7,6,5 ],\\
r\{1^{9},\bm{4^5}, 6^6, 8^3, 12^3 \}  & = & [0, 12, 18, 17, 11,10,9,  1, 13, 19, \bm{23},\bm{24}, 20, 14,  2,3,4,
8,16,22, \\
&& \bm{26},\bm{25}, 21 , 15, 7,6,5 ],\\
r\{1^{9},\bm{2^2}, 4^5, 6^6, 8^3, 12^3 \}  & = & [0, 12, 18, 17, 11,10,9,  1, 13, 19, 23,24, 20, 14,  2,3,4,
8,16,22, 26,\\
&&\bm{28}, \bm{27}, 25, 21 , 15, 7,6,5 ].
\end{array}$$
Now, let $L=\{1^{11},  4^5, 6^6, 8^3, 10^5, 12^4\}$.
Always following the proof of Proposition \ref{all_even}, we write $L=\{1^3\}\cup L_1 \cup L_2$,
where $L_1=\{1^{8}, 4, 8, 10, 12^4 \}$ and $L_2=\{4^4,6^6,8^2,10^4 \}$. Hence,
$rL=R\{1^3\}+r(L_1\cup L_2)$, $x_1=10$, $x_2=8$, $x_3=4$, $t=12$ and
$$\begin{array}{rcl}
rL'_1=r\{1^{8},4,8, 10\}  & = & [0,1,  11,10, 2,3,4,5, 9,8,7,6 ],\\
rL_1=r\{1^{8},4,8, 10, \bm{12^4}\}  & = & [0,\bm{12}, \bm{13}, 1,  11,10, 2,\bm{14}, \bm{15}, 3,4,5, 9,8,7,6 ],\\
r\{1^{8},4,8, \bm{10^5}, 12^4\}  & = & [0,12, 13, 1,  11,10, 2,14, 15, 3,4,5, 9,\bm{19}, \bm{18}, 8,7,\bm{17}, \bm{16},
6 ],\\
r\{1^{8},4,\bm{8^3}, 10^5, 12^4\}  & = & [0,12, \bm{20}, \bm{21}, 13, 1,  11,10, 2,14, 15, 3,4,5, 9, 19, 18,
8,7,17,\\
&& 16,6 ],\\
r\{1^{8},4,\bm{6^6}, 8^3, 10^5, 12^4\}  & = & [0,12, 20, \bm{26}, \bm{27}, 21, 13, 1,  11,10, 2,14, 15, 3,4,5, 9,
19,\bm{25}, \bm{24},\\
&& 18, 8,7,17,\bm{23}, \bm{22}, 16,6 ],\\
r\{1^{8},\bm{4^5},6^6, 8^3, 10^5, 12^4\}  & = & [0,12, 20, 26,\bm{30}, \bm{31}, 27, 21, 13, 1,  11,10, 2,14, 15, 3,4,5,
9, 19,\\
&& 25, \bm{29}, \bm{28}, 24, 18, 8,7,17,23, 22, 16,6 ].
\end{array}$$
\end{ex}

\begin{prop}\label{prop:even+1odd}
Let $x\geq 3$ be an odd integer and let  $1\leq s\leq x-1$.
Then, there exists a standard linear realization of any list
$$\left\{1^{x-1}, 2^{b_2}, 4^{b_4}, 6^{b_6}, \ldots,(x-1)^{b_{x-1}}, x^{s} \right\},$$
where the exponents $b_2,b_4,b_6,\ldots,b_{x-1}$ are all even and nonnegative.
\end{prop}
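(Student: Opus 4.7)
The plan is to mimic the two-stage approach used in Proposition~\ref{all_even}. First, I would construct a standard linear realization of the ``skeleton'' list $\{1^{x-1}, x^s\}$ that is of type $\C_{x-1}$. Then, since each of the exponents $b_2, b_4, \ldots, b_{x-1}$ is even by hypothesis, I would insert the required even elements in matched pairs by repeatedly applying the operator $\th_y$ of Proposition~\ref{prop:sigmax}(1), a total of $b_y/2$ times for each even $y \in \{2, 4, \ldots, x-1\}$.

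For the base I would reuse the families already constructed in Section~\ref{sec:xodd}. When $s$ is even, so that $2 \leq s \leq x-1$, Lemma~\ref{odd1}(1) with $c = 0$ gives the standard linear realization
\[ [0,\, U_1,\, U_2,\, \ldots,\, U_{s/2},\, s+1,\, s+2,\, \ldots,\, x-1] \]
of $\{1^{x-1}, x^s\}$, where $U_i = x+2i-2,\, x+2i-1,\, 2i-1,\, 2i$. A direct inspection shows that each of the $(x-1)/2$ pairs $(|L|-2i-1, |L|-2i)$ demanded by type $\C_{x-1}$ is realized either as the first two entries of a block $U_j$ (when $i < s/2$) or as two consecutive entries of the tail $s+1, \ldots, x-1$ (when $i \geq s/2$). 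When $s$ is odd, so that $1 \leq s \leq x-2$, Lemma~\ref{odd2}(3) provides
\[ [0,\, x,\, x-1,\, \ldots,\, s+1,\, s,\, W_{(s-1)/2},\, \ldots,\, W_1] \]
with $W_i = 2i,\, x+2i,\, x+2i-1,\, 2i-1$, and the analogous check shows that every such pair appears either as the second and third entries of some $W_j$ or as two consecutive entries of the opening decreasing block $x, x-1, \ldots, s$. Together these two families cover every $s \in \{1, 2, \ldots, x-1\}$.

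Once a base realization $rL_0$ of type $\C_{x-1}$ has been produced, I would form
\[ rL = \bigl(\th_2^{b_2/2} \circ \th_4^{b_4/2} \circ \cdots \circ \th_{x-1}^{b_{x-1}/2}\bigr)(rL_0). \]
By Proposition~\ref{prop:sigmax}(1), each application of $\th_y$ needs its input to be of type $\C_y$, adds $\{y^2\}$ to the underlying list, and returns a realization still of type $\C_y$. Since being of type $\C_z$ implies being of type $\C_y$ for every even $2 \leq y \leq z$ (the observation recorded after Definition~\ref{def:Cx}), after all applications of $\th_{x-1}$ the current realization is of type $\C_{x-3}$, after the $\th_{x-3}$-block it is of type $\C_{x-5}$, and so on down the chain, so every operator in the composition is applied legitimately.

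The main obstacle --- the only genuinely new verification --- is to confirm that the realizations from Lemmas~\ref{odd1}(1) and~\ref{odd2}(3) are in fact of type $\C_{x-1}$. This is a routine adjacency check on the explicit blocks $U_i$ and $W_i$, but it must be carried out carefully, handling both parities of $s$ and the degenerate cases ($s = 1$, where the $W$-blocks disappear and the base reduces to $[0, x, x-1, \ldots, 1]$, and $s = x-1$ in the even case, where the tail $s+1, \ldots, x-1$ is empty and all the required pairs must be found inside the $U_j$'s).
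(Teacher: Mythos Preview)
Your proposal is correct and follows essentially the same two-stage strategy as the paper: build a standard linear realization of $\{1^{x-1},x^s\}$ of type $\C_{x-1}$, then push in the even elements two at a time with $\th_{x-1},\th_{x-3},\ldots,\th_2$. The only difference is how the base realization is produced: you quote the explicit sequences of Lemma~\ref{odd1}(1) (for $s$ even) and Lemma~\ref{odd2}(3) (for $s$ odd) and verify directly that they are of type $\C_{x-1}$, whereas the paper rebuilds these same sequences from $[0,1,\ldots,x-1]$ (respectively $[0,x,x-1,\ldots,1]$) by iterating an $\eta/\mu$ pair $\bar s$ times. The two routes yield literally the same paths, so nothing of substance separates them; if anything, your way of citing the earlier lemmas and checking the $\C_{x-1}$ adjacencies on the blocks $U_i$, $W_i$ is a bit more transparent than the paper's operator description, and sidesteps the delicate indexing of $\eta_x,\mu_x$ at the very first step.
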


\begin{proof}
Suppose firstly that $s$ is even and write $s=2\bar s$.
Consider the following linear realization of
$L=\{1^{x-1}\}$: $RL=[0 \su x-1]$.
Since $(0,1), (2,3),\ldots, (x-3,x-2)$ are pairs of adjacent vertices, we
can apply the function
$(\mu_x\circ \eta_x)^{\bar s}$,  obtaining a linear realization of $L'=\{1^{x-1},x^s\}$, whose
vertices $|L'|-(2i+1)=x+s-2i-2$ and $|L'|-2i=x+s-2i-1$ are adjacent for all $i=0,\ldots,\frac{x-3}{2}$.
This means that $(\mu_x\circ \eta_x)^{\bar s}(RL)$ is of type $\C_{x-1}$, and so we get
$$r\{1^{x-1}, 2^{2\bar b_2}, 4^{2 \bar b_4}, \ldots,(x-1)^{2\bar b_{x-1}}, x^{2\bar s} \}=
((\th_{2})^{\bar b_2} \circ (\th_{4})^{\bar b_4} \circ \ldots\circ (\th_{x-1})^{\bar b_{x-1}}\circ
(\mu_x\circ \eta_x)^{\bar s})(RL),$$
where $\bar b_2, \bar b_4, \ldots,\bar b_{x-1}\geq 0$.

Now, suppose that $s$ is odd and write $s=2 \bar s+1$. Start considering the following linear realization of
$\tilde L=\{1^{x-1},x\}$: $r\tilde L=[0,x \giu 1]$.
In this case,  $(1,2), (3,4),\ldots, (x-2,x-1)$ are pairs of adjacent vertices and so we
can apply the function $(\mu_x\circ \eta_x)^{\bar s}$. We then obtain a linear realization of
$\tilde L'=\{1^{x-1},x^s\}$, whose vertices $|\tilde L'|-(2i+1)=x+s-2i-2$ and $|\tilde L'|-2i=x+s-2i-1$ are adjacent
for all $i=0,\ldots,\frac{x-3}{2}$. Now, $(\mu_x\circ \eta_x)^{\bar s}(r\tilde L)=r \tilde L'$ is of type $\C_{x-1}$ and hence we
get
$$r\{1^{x-1}, 2^{2\bar b_2}, 4^{2 \bar b_4}, \ldots,(x-1)^{2\bar b_{x-1}}, x^{2\bar s+1} \}=
((\th_{2})^{\bar b_2} \circ (\th_{4})^{\bar b_4} \circ \ldots\circ (\th_{x-1})^{\bar b_{x-1}}\circ
(\mu_x\circ \eta_x)^{\bar s})(r\tilde L),$$
where $\bar b_2, \bar b_4, \ldots,\bar b_{x-1}\geq 0$.
\end{proof}

\begin{ex}
Consider $L=\{1^8,2^4,6^2,8^6,9^7\}$. According to the notation of Proposition \ref{prop:even+1odd}
we have $y=9$, $s=7$, $\tilde L=\{1^8,9\}$ and $\tilde L'=\{1^8,9^7\}$. Hence $r\tilde L=[0,9\giu 1]$
and
$$\begin{array}{rcl}

r\tilde L'=r\{1^{8}, 9^7\}  & = & [0,9,8,7,6,\bm{15}, \bm{14},5,4, \bm{13}, \bm{12}, 3,2, \bm{11}, \bm{10},1 ],\\
r\{1^{8}, \bm{8^6}, 9^7\}  & = & [0,9,\bm{17}, \bm{16},8,7,6,15,14,5,4,13, \bm{21}, \bm{20}, 12, 3,2, 1, \bm{19}, \bm{18},10, 1 ],\\
r\{1^{8}, \bm{6^2},8^6, 9^7\}  & = & [0,9,17,\bm{23}, \bm{22},16 8,7,6,15,14,5,4,13, 21, 20, 12, 3,2, 1, 19, 18,10, 1 ],\\
r\{1^{8}, \bm{2^4},8^6, 9^7\}  & = & [0,9,17,23, \bm{25}, \bm{27},\bm{26}, \bm{24},16 8,7,6,15,14,5,4,13, 21, 20, 12, 3,2, 1, 19,\\
& & 18,10, 1 ].
\end{array}$$
\end{ex}

\begin{proof}[Proof of Theorem \rm{\ref{pari}}]
Item (1) easily follows from Proposition \ref{all_even}.
For item (2), write $c=qx+s$ with $0\leq s <x$.
By Proposition \ref{all_even} there exists a standard linear realization $\bm{g}$ of 
$\left\{1^{x-2}, 2^{b_2}, 4^{b_4}, 6^{b_6},\ldots, (x-1)^{b_{x-1}} \right\}$.
If $s=0$, by Lemma \ref{perfette}(1) we construct a standard linear realization of $L$ taking
$$rL=R\{1^{a-2x+3}\}+R\{1^{x-1},x^{qx}\}+\bm{g}.$$
If $s>0$, we first apply  Lemmas \ref{odd1} and \ref{odd2} to construct a standard 
linear realization of $\{1^{x-1},x^s\}$.
Then, using Theorem \ref{th:2lr} we get
$$rL=(R\{1^{a-3x+4} \}+R\{1^{x-1},x^{qx} \}+r\{1^{x-1},x^s \})\ol \bm{g}.$$
\end{proof}

\section{On the lists  $\{1^a,2^b,3^c,4^d\}$, $a\geq 2$}\label{new}

Considering the existing results on the $\BHR$ conjecture for lists whose underlying set is $\{1,2,3\}$ or
$\{1,2,3,5\}$, we think it is natural to investigate the lists  $\{1^a, 2^b, 3^c, 4^d \}$.
In particular, the  use of Theorem \ref{th:2lr}
will allow us to almost completely solve this case.
As usual, the more $1$s that are available the easier it is to construct the required realizations.
As we remarked in the Introduction, the cases $L=\{1^a,2^b,3^c\}$ and $L=\{1^a,2^b,4^d\}$ have been completely
solved in \cite{CDF} and in \cite{PPnew}, respectively. So, we may assume $c,d\geq 1$.
The main result is that $\BHR(L)$ holds when $a \geq 3$ and it also holds when $a=2$ provided that $b\geq 1$.
In order to do this we need the following well-known results.

\begin{thm}\cite{CDF}\label{Capp}
A list $\{1^a,2^b,3^c\}$ has a standard linear realization if, and only if, the integers $a,b,c$
satisfy one of the following conditions
\begin{enumerate}
\item[(1)] $a=0$, $b\geq 4$, $c\geq 3$;
\item[(2)] $a=0$, $b=3$ and $c\neq0, 3k+9$ with $k\geq0$;
\item[(3)] $a=0$ and $(b,c)\in \{(2,2),(2,3),(4,1),(4,2),(7,2),(8,2)\}$;
\item[(4)] $a\geq 2$ and  $b=0$;
\item[(5)] $a\geq 1$ and $c=0$;
\item[(6)] $a,b,c\geq 1$ with $(a,b,c)\neq (1,1,3k+5)$ and $k\geq 0$.
\end{enumerate}
\end{thm}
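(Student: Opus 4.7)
The plan is to establish the characterization in both directions: necessity (every triple $(a,b,c)$ outside items (1)--(6) admits no standard linear realization of $\{1^a,2^b,3^c\}$) and sufficiency (every triple inside (1)--(6) does admit one).

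For necessity, I would start from the structural observation that any such realization $[x_0,x_1,\ldots,x_{a+b+c}]$ on $\{0,1,\ldots,a+b+c\}$ can step only by values in $\{\pm 1,\pm 2,\pm 3\}$, so the path is locally monotone and the non-listed parameter triples can be reduced to small cases dispatched by enumeration plus modular obstructions. When $a=0$ and $b\leq 1$, a parity argument on partial sums (2s preserve parity while 1s and 3s flip it) forces the path to miss vertices of one parity, leaving only $b=c=0$. When $a=0$ and $b\in\{2,3,4,7,8\}$, a combination of finite enumeration with a residue argument modulo $3$ (since 3s act trivially mod~$3$) pins down exactly the exceptional pairs; in particular the arithmetic progression $c=3k+9$ for $b=3$ is ruled out via a mod-$3$ counting argument on how many vertices in each residue class have been reached after a prescribed number of $1$s and $2$s. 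The exceptional family $(a,b,c)=(1,1,3k+5)$ is handled by the same kind of mod-$3$ obstruction on partial sums.

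For sufficiency I would split into easy extremal cases and the generic case. Items (4) and (5) are covered by explicit zigzag patterns: for $c=0$ one uses the well-known terrace-type construction on lengths $\{1,2\}$, and for $b=0$ one takes a path alternating steps of $1$ and $3$ so as to cover $\{0,\ldots,a+c\}$. The finite sporadic sets in (1), (2), (3) can simply be exhibited by listing witnessing paths. The heart of the matter is item (6), where $a,b,c\geq 1$ and $(a,b,c)\neq (1,1,3k+5)$: I would proceed by induction on $a+b+c$ using a \emph{local splice} that replaces an edge $[x,y]$ of length $\ell$ in an existing realization by a detour through a new vertex, thereby trading one edge of length $\ell$ for two edges of the appropriate new lengths. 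This lets one pass from a realization of $\{1^a,2^b,3^c\}$ to one of $\{1^{a+1},2^b,3^c\}$, $\{1^a,2^{b+1},3^c\}$, or $\{1^a,2^b,3^{c+1}\}$; combined with seed realizations for small parameters this yields every triple in (6). Alternatively, the concatenation method of Theorem~\ref{th:2lr} splits a large admissible triple into two smaller admissible triples whose standard linear realizations are available by induction.

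The main obstacle will be sharpness at the exceptional family $(1,1,3k+5)$: one must prove non-realizability for the entire arithmetic progression by a single mod-$3$ invariant and at the same time ensure that the inductive construction in (6) never accidentally lands in this family. Delineating the mod-$3$ behavior of the splice and choosing distinct seed realizations for each residue of $c\pmod 3$ are what make the construction genuinely case-dependent, but once the three residues are handled in parallel the induction closes cleanly.
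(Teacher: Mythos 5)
First, a point of order: the paper does not prove this statement at all --- it is quoted from Capparelli and Del Fra \cite{CDF} and used as a black box --- so there is no internal proof to compare yours against; I can only judge the proposal on its own terms, and it has several concrete gaps. On the sufficiency side, items (1) and (2) are not ``finite sporadic sets'' that can be dispatched by listing witnesses: (1) is the infinite two-parameter family $a=0$, $b\geq 4$, $c\geq 3$, and (2) is infinite in $c$. More seriously, the local-splice step driving your induction for item (6) is not available in the form you use it. Extending a realization on $\{0,\dots,n\}$ to one on $\{0,\dots,n+1\}$ by an internal insertion of the vertex $n+1$ deletes one edge and creates two, and since $n+1$ can only be adjacent to $n$, $n-1$ or $n-2$, the only legal changes to the multiset are: remove a $1$ and add a $1$ and a $2$ (insert between $n$ and $n-1$); remove a $2$ and add a $1$ and a $3$ (insert between $n$ and $n-2$); remove a $1$ and add a $2$ and a $3$ (insert between $n-1$ and $n-2$). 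None of these is the move $a\mapsto a+1$ or $c\mapsto c+1$ that your induction requires. (Appending $n+1$ after the terminal vertex could add a single edge of length $1$, $2$ or $3$, but only when the path happens to end at $n$, $n-1$ or $n-2$ respectively --- an invariant your induction neither states nor maintains; this is precisely why the paper's $\eta_x$, $\mu_x$ and ``perfect realization'' machinery is set up the way it is.) Your fallback, Theorem~\ref{th:2lr}, does not rescue this: the statement to be proved asks for \emph{standard} linear realizations, and, as the paper itself notes, the concatenation in Theorem~\ref{th:2lr} generally destroys standardness, so it cannot close an induction whose conclusion must be a standard realization.

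The necessity direction is also too thin to count as a proof. The two infinite exceptional families, $(0,3,3k+9)$ and $(1,1,3k+5)$, are exactly the hard part of \cite{CDF}, and the promised ``mod-$3$ counting argument'' is never exhibited. A purely modular invariant cannot work on its own: for $b=3$ the values $c=3$ and $c=6$ are realizable while $c=9,12,\dots$ are not, all congruent to $0$ modulo $3$, so whatever obstruction you use must combine the residue information with a count of how many vertices of each residue class are actually available --- and that quantitative step is where the real content lies. Likewise the parity claim for $a=0$, $b\leq 1$ is asserted rather than carried out (with a single step of length $2$ among steps of length $3$ the parity pattern is only ``almost'' alternating, and one must still compare it against the parity distribution of $\{0,\dots,b+c\}$). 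As it stands the proposal is a sensible plan of attack, but the key inductive mechanism is wrong as stated and the non-existence half is not proved.
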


\begin{thm}\cite{PPnew}\label{EJC}
If $a\geq 1$, $b\geq 2$ and $d\geq 0$, the list $\{1^a,2^b,4^d\}$ admits a standard linear realization.
\end{thm}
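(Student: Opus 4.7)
The plan is to reduce to the minimum-$a$ case and then construct the realization there by an explicit splitting argument.

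\emph{Reduction to $a=1$.} The perfect linear realization $R\{1^{a-1}\} = [0,1,\ldots,a-1]$ of $\{1^{a-1}\}$ and the $R+r$ concatenation from Section~\ref{sec:realization} yield
$$R\{1^{a-1}\} + r\{1,2^b,4^d\} \;=\; [0,1,\ldots,a-1,\,y_1+(a-1),\,y_2+(a-1),\ldots],$$
which is a standard linear realization of $\{1^a,2^b,4^d\}$ whenever a standard linear realization $r\{1,2^b,4^d\}=[0,y_1,y_2,\ldots]$ exists. Hence it is enough to construct $r\{1,2^b,4^d\}$ for every $b\geq 2$ and $d\geq 0$.

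\emph{The case $a=1$.} When $d=0$, a standard linear realization of $\{1,2^b\}$ is the up-down path $[0,2,4,\ldots,2\lceil b/2\rceil,\,2\lceil b/2\rceil-1,\ldots,3,1]$, easily checked to have edge-lengths $\{1,2^b\}$. When $d\geq 1$, the key observation is that both $2$-jumps and $4$-jumps preserve parity, so the path splits at its unique edge of length $1$ into an ``even half'' starting at $0$ that uses only $2$s and $4$s among the even vertices of $\{0,1,\ldots,b+d+1\}$, and an ``odd half'' that uses only $2$s and $4$s among the odd vertices. Each half is now a smaller problem of the same form (up to dividing the vertex labels by $2$), which can be realized by a zig-zag with an appropriate interleaving of $4$-jumps and $2$-jumps, with the $d$ fours and the $b$ twos distributed between the two halves according to the parity of $b+d$ and the ratio of $d$ to $b$.

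\emph{Main obstacle.} The real difficulty is that no single formula covers all $(b,d)$: one must split into a handful of cases according to the relative sizes of $b$ and $d$. For instance, when $b\geq 2d-1$ one can use a simple ``ascend-by-$4$s then descend-by-$2$s'' template on the evens such as $[0,4,2,6,\ldots]$, while the complementary range $b<2d-1$ requires a template that alternates $4$-jumps and $2$-jumps more tightly so as to stay within $\{0,1,\ldots,b+d+1\}$. The arithmetic of checking, for each template, that the multiset of differences is $\{1,2^b,4^d\}$ and that every vertex in $\{0,1,\ldots,b+d+1\}$ is visited exactly once is routine but fiddly, and the bookkeeping needed to verify that the chosen templates together cover every admissible pair $(b,d)$ is the heart of the argument.
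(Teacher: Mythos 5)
First, note that Theorem~\ref{EJC} is quoted from \cite{PPnew}; the present paper contains no proof of it, so your argument can only be judged on its own terms. Your reduction to $a=1$ via $R\{1^{a-1}\}+r\{1,2^b,4^d\}$ is sound, and the parity observation for $d\ge 1$ (all edges of length $2$ or $4$ preserve parity, so the path splits at its unique length-$1$ edge into an even-vertex subpath and an odd-vertex subpath, each of which, after halving labels, is a linear realization of a list with underlying set $\{1,2\}$) is the right structural lens. Two smaller issues: your $d=0$ formula $[0,2,\ldots,2\lceil b/2\rceil,2\lceil b/2\rceil-1,\ldots,3,1]$ omits the vertex $b+1$ when $b$ is even (e.g.\ for $b=2$ it gives $[0,2,1]$ on the vertex set $\{0,1,2,3\}$); the intended path is $[0,2,\ldots,b,b+1,b-1,\ldots,1]$. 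This is fixable, but it signals that the "routine" checks are not being done.

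The genuine gap is that for $d\ge 1$ you never actually produce the realizations: the proposal ends by describing what a proof would have to contain (templates split by the relative sizes of $b$ and $d$, verification that each template visits every vertex once with the correct difference multiset, and a check that the templates cover all admissible $(b,d)$) and you yourself call this ``the heart of the argument.'' A plan for a case analysis is not a case analysis. Moreover, this is precisely where the hypothesis $b\ge 2$ must do its work, and your sketch never uses it: the paper records (end of Section~\ref{new}, citing \cite[Lemma 13]{PPnew}) that $\{1,2,4^6\}$ has \emph{no} linear realization, because in the parity splitting one of the two halves receives no edge of length $2$ and hence, after halving, would need a Hamiltonian path on more than two consecutive integers using only steps of length $2$, which is impossible. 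So the distribution of the $b$ twos between the two halves, the resulting endpoint constraints at the unique length-$1$ join, and the realizability of each half are exactly the delicate points that separate the true statement ($b\ge 2$) from a false one ($b=1$); they cannot be waved off as fiddly bookkeeping. Until explicit constructions (or an appeal to a proved result on lists with underlying set $\{1,2\}$ with prescribed endpoints) are supplied and verified for every $(b,d)$ with $b\ge 2$, $d\ge 1$, this is an outline rather than a proof.
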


Now we will see that  Proposition \ref{prop:sigmax} plays a fundamental role in proving the following one.

\begin{prop}\label{M}
The list $\{1^a,2^b,3^c,4^d\}$ admits a standard linear realization in each of the following cases:
\begin{itemize}
  \item [(1)] $a\geq3$, $b\geq0$ even, $0\leq c \leq a-3$ and $d\geq 0$;
  \item [(2)] $a\geq2$, $b\geq1$ odd, $0\leq c \leq a-2$ and $d\geq 0$;
  \item [(3)] $a\geq2$, $b\geq0$ even, $2\leq c \leq a$ and $d\geq 0$ even.
\end{itemize}

\end{prop}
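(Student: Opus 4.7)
The plan hinges on the operator $\th_4 = \mu_3\circ\eta_3$ from Proposition~\ref{prop:sigmax}(1), which inserts two copies of~$4$ into a type-$\C_4$ standard linear realization while preserving type~$\C_4$ and leaving the counts of $1$s, $2$s, and $3$s unchanged. Iterating $\th_4^{d/2}$ handles the $\{4^d\}$ component of $L$ whenever $d$ is even; in particular, this disposes of case~(3) at one stroke, as soon as a type-$\C_4$ standard linear realization of $\{1^a, 2^b, 3^c\}$ is in hand.

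For each of the three cases, the first move is therefore to produce a type-$\C_4$ standard linear realization of $\{1^a, 2^b, 3^c\}$. Under the hypotheses of (1)--(3), one checks that the parameters fall into case (4), (5), or (6) of Theorem~\ref{Capp}, so \emph{some} standard linear realization exists. The inequalities $c\leq a-3$, $c\leq a-2$, or $c\geq 2$ in (1), (2), (3) respectively provide enough room in either $1$s or $3$s to arrange the realization so that both pairs of vertices $|L|-1, |L|$ and $|L|-3, |L|-2$ are consecutive on the path. Once this type-$\C_4$ base is established, applying $\th_4^{d/2}$ yields the desired realization of $L$, settling case~(3) and the $d$-even subcases of (1) and~(2).

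The remaining work is the odd-$d$ subcases of (1) and~(2), where $\th_4^{d/2}$ alone is insufficient. I would reduce these to the even-$d$ case via Theorem~\ref{th:2lr}: split $L = L_1 \cup L_2$ with $L_1$ containing exactly one $4$ and $L_2$ containing the remaining (even number of) $4$s. When $b\geq 2$ the natural choice is $L_1 = \{1, 2^2, 4\}$, which admits the standard linear realization $[0,4,2,1,3]$ (and is covered by Theorem~\ref{EJC}); then $L_2 = \{1^{a-1}, 2^{b-2}, 3^c, 4^{d-1}\}$ still satisfies the hypotheses of (1) or (2) with $a, b$ reduced and $d-1$ even, and is handled by the first part of the argument. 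For the low-$b$ situations ($b=0$ in~(1), $b=1$ in~(2)), I would use an alternative small splitting, for instance $L_1 = \{1^3, 4\}$ with standard linear realization $[0,4,3,2,1]$, leaving $L_2$ with the remaining $1$s, $3$s, and an even number of $4$s.

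The main obstacle will be the first step: producing the type-$\C_4$ base realization in every parameter window. Theorem~\ref{Capp} is only an existence statement and does not by itself provide the terminal adjacencies required for type~$\C_4$, so one must either inspect the explicit constructions of~\cite{CDF} or build dedicated families tailored to the three cases, and carefully handle small exceptional parameters (e.g., when $c=0$ or when $b$ is too small to split off $\{1,2^2,4\}$) where the uniform construction breaks down.
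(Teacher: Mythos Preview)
Your outline has two genuine gaps.

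First, for odd~$d$ you propose to use Theorem~\ref{th:2lr} to split off a piece containing a single~$4$. But Theorem~\ref{th:2lr} does not produce a \emph{standard} linear realization; as the paper remarks immediately after that theorem, the concatenation $\bm g \oplus \bm h$ is in general not standard. Since the proposition asserts the existence of a \emph{standard} realization, this reduction does not prove what is claimed. The paper handles odd~$d$ far more simply: it starts from a tiny seed that already contains one~$4$ (e.g.\ $[0,4,3,2,1]$ for $\{1^3,4\}$), and then applies $\th_4^{(d-1)/2}$.

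Second, the step you flag as ``the main obstacle'' is indeed the heart of the matter, and the paper circumvents it entirely rather than confronting it. You want a type-$\C_4$ standard realization of the full list $\{1^a,2^b,3^c\}$; this is not supplied by Theorem~\ref{Capp}, and constructing it directly would require substantial work. The paper instead starts from a four- or five-vertex seed of type~$\C_4$ (namely $\{1^3\}$, $\{1^3,4\}$, $\{1^2,2\}$, $\{1^2,2,4\}$, or $\{1^2,3^2\}$), first inserts the $4$s with $\th_4$, and only afterwards builds up the $1$s, $2$s, and $3$s. The key tool you have overlooked is $\sigma_2$ from Proposition~\ref{prop:sigmax}(2), which adds one~$1$ and one~$3$ while preserving type~$\C_2$; this is exactly what explains the inequalities $c\le a-3$, $c\le a-2$, $c\le a$ in the three cases. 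After $\sigma_2^{c}$ (or $\sigma_2^{c-2}$), the paper applies $\th_2^{b/2}$ to insert the $2$s in pairs (hence the parity condition on~$b$), and finally prepends $R\{1^{a-c-3}\}$ (respectively $R\{1^{a-c-2}\}$, $R\{1^{a-c}\}$) to absorb the remaining $1$s. Everything stays standard throughout because all the operators $\th_x$, $\sigma_x$ preserve standardness and concatenation with a perfect realization on the left does too.
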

\begin{proof}
(1) We split the proof into $2$ subcases according to the parity of $d$.
\begin{itemize}
\item [(1a)] $d$ even.  Set $d=2d'$ and $b=2b'$.
Let $L=\{1^3\}$ and
note that $RL=[0,1,2,3]$ is a standard linear realization of type $\C_4$.
So if we apply $\th_4^{d'}$ to $rL$, by Proposition \ref{prop:sigmax}(1),  we obtain a standard
linear realization $rL'$ of $L'=L\cup \{4^{2d'}\}=\{1^3,4^d\}$ of type $\C_4$
and hence also of type $\C_2$. Thus by Proposition \ref{prop:sigmax}(2), if we apply $\sigma_2^c$ to $rL'$ we obtain a
standard linear realization $rL''$ of $L''=L'\cup\{1^{c},3^{c}\}=\{1^{c+3},3^c,4^d\}$
of type $\C_2$. Hence we can apply the function $\th_2^{b'}$ to $rL''$. In this way we get a
standard linear realization of $L''\cup \{2^{2b'}\}=\{1^{c+3},2^b,3^c,4^d\}$.
To conclude it is sufficient to consider also the perfect linear realization of $\{1^{a-c-3}\}$.
\item [(1b)] $d$ odd.
 A linear realization of $\{1^a,2^b,3^c,4^d\}$ can be obtained in the same way of case (1a) starting from
$L=\{1^3,4\}$ and
 $rL=[0,4,3,2,1]$ that is a standard linear realization of type $\C_4$.
\end{itemize}

\noindent (2) We split the proof into $2$ subcases according to the parity of $d$.
\begin{itemize}
  \item [(2a)] $d$ even.  Set $d=2d'$ and $b=2b'+1$.
Let $L=\{1^2,2\}$ and
note that $rL=[0,1,3,2]$ is a standard linear realization of type $\C_4$.
  So if we apply $\th_4^{d'}$ to $rL$, by Proposition \ref{prop:sigmax}(1),  we obtain a standard linear realization
  $rL'$ of $L'=L\cup \{4^{2d'}\}=\{1^2,2,4^d\}$ of type $\C_4$
and hence also of type $\C_2$.
Thus by Proposition \ref{prop:sigmax}(2), if we apply $\sigma_2^c$ to $rL'$ we obtain a standard linear realization $rL''$ of $L''=L'\cup\{1^{c},3^{c}\}=\{1^{c+2},2,3^c,4^d\}$
of type $\C_2$. Hence we can apply the function $\th_2^{b'}$ to $rL''$. In this way we get a standard  linear realization
of $L''\cup \{2^{2b'}\}=\{1^{c+2},2^b,3^c,4^d\}$.
To conclude it is sufficient to consider also the perfect linear realization of $\{1^{a-c-2}\}$.
  \item [(2b)] $d$ odd. A linear realization of $\{1^a,2^b,3^c,4^d\}$ can be obtained in the same way of case (2a) starting from
$L=\{1^2,2,4\}$ and
$rL=[0,4,3,1,2]$ that is a standard linear realization of type $\C_4$.
\end{itemize}

\noindent (3) Set $d=2d'$ and $b=2b'$.
Let $L=\{1^2,3^2\}$ and
note that $rL=[0,3,4,1,2]$ is a standard linear realization of type $\C_4$.
So if we apply $\th_4^{d'}$ to $rL$, by Proposition \ref{prop:sigmax}(1),  we obtain a standard linear realization $rL'$ of $L'=L\cup \{4^{2d'}\}$ of type $\C_4$
and hence also of type $\C_2$.
Thus by Proposition \ref{prop:sigmax}(2), if we apply $\sigma_2^{c-2}$ to $rL'$ we obtain a standard
linear realization $rL''$ of $L''=L'\cup\{1^{c-2},3^{c-2}\}$
of type $\C_2$. Hence we can apply the function $\th_2^{b'}$ to $rL''$. In this way we get a standard linear realization of $L''\cup \{2^{2b'}\}=\{1^{c},2^b,3^c,4^d\}$.
To conclude it is sufficient to consider also the perfect linear realization of $\{1^{a-c}\}$.
\end{proof}

\begin{cor}\label{cor5}
Suppose $c,d\geq 1$.
The list $\{1^a, 2^b, 3^c, 4^d\}$ admits a linear realization in each of the following cases:
\begin{itemize}
\item[(1)]  $a\geq 3$ and $b\geq 2$;
\item[(2)]  $a\geq 4$ and $b=1$;
\item[(3)]  $a\geq 5$ and $b=0$.
\end{itemize}
In particular, $\{1^a, 2^b, 3^c, 4^d\}$ admits a  linear realization for all $a\geq 5$.
\end{cor}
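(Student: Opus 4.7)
The strategy is a case analysis driven by Proposition~\ref{M}, which supplies a standard linear realization of $L=\{1^a,2^b,3^c,4^d\}$ whenever the number of $3$s is small enough relative to the number of $1$s: $c\leq a-3$ with $b$ even (item (1)), $c\leq a-2$ with $b$ odd (item (2)), or $2\leq c\leq a$ with both $b$ and $d$ even (item (3)). When $c$ exceeds these bounds I plan to write $L=L_1\cup L_2$ and appeal to Theorem~\ref{th:2lr}, choosing $L_1$ from Theorem~\ref{Capp} (or Theorem~\ref{EJC}) and $L_2$ within the scope of Proposition~\ref{M}.

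In case (1) of the corollary ($a\geq 3$, $b\geq 2$), the cleanest split is $L_1=\{1^2,2^2,3^{c_1}\}$ via Theorem~\ref{Capp}(6): the choice $a_1=b_1=2$ sidesteps the exceptional triple $(1,1,3k+5)$, and picking $c_1=c-a+5$ places $c-c_1=a-5$ inside the bound of Proposition~\ref{M}(1) applied to $L_2=\{1^{a-2},2^{b-2},3^{c-c_1},4^d\}$. When $b$ is odd, the variant with $b_1=1$ flips the parity of $b_2$ and triggers Proposition~\ref{M}(2) on the residual instead. Case (2) ($a\geq 4$, $b=1$) is handled analogously with $b_1\in\{0,1\}$, the stronger hypothesis $a\geq 4$ exactly compensating for the single available $2$ having to sit entirely in $L_1$ or entirely in $L_2$. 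Case (3) ($a\geq 5$, $b=0$) is the most delicate because Theorem~\ref{Capp} supplies no $L_1$ containing $3$s when $b_1=0$; here I plan to take $L_1$ itself from Proposition~\ref{M} --- for instance $L_1=\{1^{a_1},3^{c_1},4^{d_1}\}$ from item (3) with $d_1$ even --- and $L_2$ from either item (1) or item (3) of Proposition~\ref{M}, with the $a\geq 5$ hypothesis reserving enough $1$s for both halves to meet $a_i\geq 2$ simultaneously.

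The main obstacle is the set of corner parameter tuples where the split above cannot simultaneously respect all parity requirements and $c$-bounds --- most notably case (3) with $c$ large and $d$ odd, where no pair of Proposition~\ref{M} pieces delivers odd total $d$ together with $c$ bigger than $a-3$. I anticipate these will have to be disposed of either by explicit constructions for a small finite family, or by invoking the perfect-plus-standard concatenation $RL_1 + rL_2$ described after Theorem~\ref{th:2lr}, which preserves standardness and so permits iterating the splitting three or more times with a perfect prefix supplied by Lemma~\ref{perfette} or by a direct ad hoc build. The ``In particular'' clause then follows immediately: for $a\geq 5$ the three cases of the corollary together cover every $b\geq 0$. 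Throughout, tracking the exceptional triple of Theorem~\ref{Capp} and the parity of $d$ required by Proposition~\ref{M}(3) is the chief bookkeeping burden.
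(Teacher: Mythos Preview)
Your plan is headed in the right direction---split $L$ via Theorem~\ref{th:2lr} and feed the pieces to Theorem~\ref{Capp}, Theorem~\ref{EJC}, or Proposition~\ref{M}---but you are making life far harder than necessary by insisting on splitting the $3$s between the two halves. This is what forces you into parity bookkeeping on $b$ and $d$, creates the ``main obstacle'' you identify in case~(3), and leaves unacknowledged gaps at the small-$a$ end of cases~(1) and~(2): for instance, with $a=3$ and $b\geq 2$ even, your proposed split $L_1=\{1^2,2^2,3^{c_1}\}$, $L_2=\{1^{a-2},2^{b-2},3^{c-c_1},4^d\}$ leaves $L_2$ with only one~$1$, which is below every threshold in Proposition~\ref{M}.

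The clean idea you are missing is to put \emph{all} the $3$s in one piece and \emph{all} the $4$s (and $2$s) in the other. Theorem~\ref{Capp}(4) gives a standard linear realization of $\{1^{a'},3^c\}$ for any $a'\geq 2$ and any $c$, with no parity constraints and no upper bound on $c$. For the complementary piece $\{1^{a''},2^b,4^d\}$ you can use Theorem~\ref{EJC} when $b\geq 2$, and Proposition~\ref{M} (with $c=0$) when $b\in\{0,1\}$: in particular $\{1^2,2^1,4^d\}$ falls under Proposition~\ref{M}(2) and $\{1^3,4^d\}$ under Proposition~\ref{M}(1), both for arbitrary $d\geq 0$. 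This yields the three splits
\[
\{1^2,3^c\}\ol\{1^{a-2},2^b,4^d\},\qquad
\{1^{a-2},3^c\}\ol\{1^2,2^1,4^d\},\qquad
\{1^{a-3},3^c\}\ol\{1^3,4^d\},
\]
which handle cases~(1), (2), (3) respectively in one line each, and explain exactly why the thresholds on $a$ are $3$, $4$, $5$. The obstacle you flagged (case~(3) with $d$ odd and $c$ large) simply evaporates, because $\{1^3,4^d\}$ has no parity restriction on $d$ and the other piece absorbs all of $c$.
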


\begin{proof}
Applying Theorem \ref{th:2lr}, we can concatenate a standard linear realization of Theorem \ref{Capp} with one of Theorem \ref{EJC} or one of Proposition \ref{M}(2):
$$\begin{array}{rcll}
r\{1^a,2^b,3^c,4^d\}  &=& r\{1^2,3^c\}\ol r\{1^{a-2},2^b,4^d\} & \textrm{if } a\geq 3 \textrm{ and } b\geq 2,\\
r\{1^a, 2^1, 3^c, 4^d\}  & = & r\{1^{a-2}, 3^c \}\ol r\{1^2,2^1,4^d\}& \textrm{if } a\geq 4,\\
r\{1^a, 3^c, 4^d\}  & = & r\{1^{a-3},3^c \}\ol r\{1^3, 4^d\} & \textrm{if } a\geq 5.
\end{array}$$
\end{proof}

\begin{lem}\label{6}
Suppose $b,d\geq 0$ and $c\geq 1$.
The list $\{1^a, 2^b, 3^c, 4^d\}$ admits a standard linear realization in each of the following cases:
\begin{itemize}
\item[(1)] $a\geq 1$, $b=0$, $c\in \{3,4\}$ and $d$ is odd;
\item[(2)] $a\geq 1$, $b=1$ and $c\in \{1,2,3\}$;
\item[(3)] $a\geq 1$, $b=1$, $c\equiv 0,2 \pmod 3$ and $d=1$;
\item[(4)] $a\geq 2$, $b=0$, $c=1$;
\item[(5)] $a\geq 2$, $b=0$, $c=3$ and $d$ is even;
\item[(6)] $a\geq 2$, $b=0$, $c\equiv 0,1 \pmod 3$ and $d=1$;
\item[(7)] $a\geq 3$, $c=2$ and $d$ is odd.
\end{itemize}
\end{lem}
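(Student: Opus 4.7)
The plan is to handle each of the seven subcases separately, since each one corresponds to a distinct ``boundary'' of the parameter region not covered by Corollary \ref{cor5}. The main tools are Theorems \ref{Capp} and \ref{EJC} (providing standard linear realizations for $\{1^a,2^b,3^c\}$ and $\{1^a,2^b,4^d\}$), the concatenation Theorem \ref{th:2lr} via the $\ol$ operator, Proposition \ref{M}, the operators $\th_x$ and $\sigma_x$ of Proposition \ref{prop:sigmax}, and the perfect-realization addition $R\{1^k\}+\cdot$. In each subcase $a$ is small (1, 2 or 3), so the ``extra 1s are cheap'' principle used in Corollary \ref{cor5} is no longer available and one must build realizations of very specific small lists and extend them.

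The general recipe is: first decompose $L=L_1\cup L_2$ with $L_1\subseteq\{1^\ast,3^\ast\}$ and $L_2\subseteq\{1^\ast,2^\ast,4^\ast\}$, realizing $L_1$ via Theorem \ref{Capp} and $L_2$ via Theorem \ref{EJC} or a small explicit construction, then glue by $\ol$. For the cases in which $d$ (or $c$) is allowed to grow along an arithmetic progression (cases (1), (5), (7), and parts of (3), (6)), I would construct a base standard linear realization of type $\C_4$ or $\C_2$ that already contains the minimum allowed $d$, and then iterate $\th_4$ (to add pairs of 4s) or $\sigma_2$ (to add pairs $\{1,3\}$) while preserving the $\C_x$ type; finally I prepend $R\{1^k\}$ to hit the correct value of $a$. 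For cases (2) and (4) the parameter $c$ is bounded, so after a finite check of base realizations the only degree of freedom left is $a$ and $d$, both of which are adjusted by $R\{1^k\}+\cdot$ together with Theorem \ref{EJC}.

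The main obstacle is cases (3) and (6), where $a$ can be as small as 1 or 2, $d=1$ is fixed, $b\in\{0,1\}$, and $c$ is unbounded but constrained modulo~3. The mod-3 congruence on $c$ is precisely the one needed to avoid the exceptional family $(a,b,c)=(1,1,3k+5)$ of Theorem \ref{Capp}, so for most residues the natural split $r\{1^{a'},2^{b'},3^c\}\ol r\{1^{a''},4\}$ succeeds. The delicate subresidues (for instance, $a=1$, $b=1$, $c\equiv 2\pmod 3$ with $c\geq 5$ in case~(3), and $a=2$, $b=0$, $c\equiv 1\pmod 3$ with $c\geq 4$ in case~(6)) must be handled by an alternative split—either building a standard linear realization of $\{1^{a'},2^{b'},3^c\}$ of type $\A_3$ or $\B_3$ and then inserting the single $4$ via $\eta_3$ or $\mu_3$, or by replacing $\{1,3^c\}$ with $\{3^c,4\}$-style blocks via Theorem \ref{th:2lr}.

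In summary, the proof is a seven-case verification where each case either reduces to a direct concatenation of previously known realizations or to iterating a $\C_x$-preserving operator on a small explicit base. The work is largely bookkeeping; the only genuinely delicate point is arranging the concatenation in cases (3) and (6) for the residues where $\{1^a,3^c\}$ itself is not realizable, which is resolved by absorbing the 4 into a type $\A_3$ or $\B_3$ realization of the $\{1^a,2,3^c\}$-part.
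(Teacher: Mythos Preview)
Your plan has a real gap at the minimal values of $a$.  A realization of type $\C_4$ must contain the two edges $[|L|-1,|L|]$ and $[|L|-3,|L|-2]$, each of length~$1$, so it forces at least two $1$s in the list; hence the $\th_4$-iteration you propose for case~(1) cannot even get started when $a=1$ (for instance, $\{1,3^3,4\}$ admits no $\C_4$ realization).  Likewise, in case~(2) with $a=b=1$ you invoke Theorem~\ref{EJC} to absorb the $4^d$, but that theorem requires $b\ge 2$; and splitting via Theorem~\ref{th:2lr} as $r\{1^{a_1},3^c\}\ol r\{1^{a_2},2,4^d\}$ needs $a_1\ge 2$ (Theorem~\ref{Capp}(4)) and $a_2\ge 1$, i.e.\ at least three $1$s altogether.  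The $\A_3$/$\B_3$ insertion idea you reserve for (3) and (6) could in principle be pressed into service for (1) and (2), but the alternation of $\eta_3$ and $\mu_3$ perturbs the $3$-count at each step, and covering both parities of $d$ while keeping $c$ fixed requires separate base realizations that your outline does not supply (try, e.g., producing $\{1,2,3,4^{d}\}$ for all $d\ge 0$ with the tools you list).

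The paper takes an entirely different route: for each of the seven cases it writes down an \emph{explicit} one-parameter family of Hamiltonian paths, built by concatenating arithmetic progressions of common difference~$4$ (in the cases where $d$ varies) or common difference~$3$ (in cases~(3) and~(6), where $c$ varies).  For example, case~(1) with $c=3$ and $d=4k+1$ is realized by
\[
[\,0,\;3,7,\ldots,4k+3,\;4k+4,4k,\ldots,4,\;1,5,\ldots,4k+5,\;4k+2,4k-2,\ldots,2\,],
\]
and analogous formulas handle the other residues of $d\pmod 4$.  Once the minimal-$a$ realization is in hand, larger $a$ follows by prepending $R\{1^{\tilde a}\}$.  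No concatenation via $\ol$, no $\C_x$ machinery, and no appeal to Theorems~\ref{Capp} or~\ref{EJC} is used.
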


\begin{proof}
First, take the following standard linear realizations,
where $x\arq{y} z$ means the arithmetic progression $x,  x+4, x+8,\ldots,x+4y=z$ if $x\leq z$,
or the arithmetic progression $x,x-4,x-8,\ldots,x-4y=z$ otherwise.

\begin{footnotesize}
$$\begin{array}{l}
      (1) \left\{\begin{array}{rcl}
 r\{1^1, 3^3,4^{4k+1} \}  & = & [0,3\arq{k} 4k+3, 4k+4\arq{k} 4, 1 \arq{k+1} 4k+5, 4k+2 \arq{k} 2],  \\
 r\{1^1, 3^3,4^{4k+3} \}  & = & [0,3\arq{k+1} 4k+7, 4k+4\arq{k} 4, 1 \arq{k+1} 4k+5, 4k+6 \arq{k+1} 2],\\
 r\{1^1, 3^4,4^{4k+1} \}  & = & [ 0,3\arq{k} 4k+3, 4k+6\arq{k+1} 2, 5 \arq{k} 4k+5, 4k+4 \arq{k} 4,1], \\
 r\{1^1, 3^4,4^{4k+3} \}  & = & [ 0,3\arq{k+1} 4k+7, 4k+6\arq{k+1} 2, 5 \arq{k} 4k+5, 4k+8 \arq{k+1} 4,1].
      \end{array}\right.\\[12pt]
   \end{array}$$
$$\begin{array}{l}
         (2) \left\{\begin{array}{rcl}
r\{1^1,2^1,3^1,4^{4k} \} & = & [0\arq{k} 4k, 4k+3\arq{k} 3, 1 \arq{k} 4k+1, 4k+2\arq{k}2 ],\\
r\{1^1,2^1,3^1,4^{4k+1} \} & = & [0\arq{k+1} 4k+4, 4k+1 \arq{k} 1, 3\arq{k} 4k+3, 4k+2\arq{k} 2 ],  \\
r\{1^1,2^1,3^1,4^{4k+2} \} & = & [0\arq{k+1} 4k+4, 4k+3 \arq{k} 3, 1 \arq{k+1} 4k+5, 4k+2 \arq{k} 2  ], \\
r\{1^1,2^1,3^1,4^{4k+3} \} & = & [0\arq{k+1} 4k+4, 4k+5\arq{k+1} 1, 3 \arq{k} 4k+3, 4k+6 \arq{k+1} 2],\\
 r\{1^1, 2^1, 3^2,4^{4k} \}  & = & [0,3\arq{k} 4k+3, 4k+2\arq{k} 2, 4 \arq{k} 4k+4, 4k+1\arq{k} 1],\\
 r\{1^1, 2^1, 3^2,4^{4k+1} \}  & = & [0,3\arq{k} 4k+3, 4k+4\arq{k} 4, 2 \arq{k} 4k+2, 4k+5\arq{k+1} 1],\\
 r\{1^1, 2^1, 3^2,4^{4k+2} \}  & = & [0,3\arq{k} 4k+3, 4k+6\arq{k+1} 2, 4 \arq{k} 4k+4, 4k+5\arq{k+1} 1],\\
 r\{1^1, 2^1, 3^2,4^{4k+3} \}  & = & [0,3\arq{k+1} 4k+7, 4k+4\arq{k} 4, 2 \arq{k+1} 4k+6, 4k+5\arq{k+1} 1],\\
r\{1^1,2^1,3^3,4^{4k} \} & = & [0,3\arq{k} 4k+3, 4k+5, 4k+2\arq{k} 2, 1 \arq{k} 4k+1, 4k+4\arq{k} 4 ],\\
r\{1^1,2^1,3^3,4^{4k+1} \} & = & [0,3\arq{k} 4k+3, 4k+6,  4k+5 \arq{k+1} 1, 4\arq{k} 4k+4, 4k+2\arq{k} 2 ],  \\
r\{1^1,2^1,3^3,4^{4k+2} \} & = & [0,3\arq{k} 4k+3, 4k+6 \arq{k+1} 2, 1 \arq{k+1} 4k+5, 4k+7, 4k+4 \arq{k} 4  ], \\
r\{1^1,2^1,3^3,4^{4k+3} \} & = & [0,3\arq{k+1} 4k+7, 4k+8, 4k+5\arq{k+1} 1, 4 \arq{k} 4k+4, 4k+6 \arq{k+1} 2].
 \end{array}\right.\\
%  \end{array}$$
% $$\begin{array}{l}
(4) \left\{\begin{array}{rcl}
 r\{1^2, 3^{1},4^{4k} \}  & = & [0\arq{k} 4k,  4k+3 \arq{k} 3,  2 \arq{k} 4k+2, 4k+1 \arq{k} 1],\\
r\{1^2,3^1,4^{4k+1}\} & = & [0\arq{k+1} 4k+4, 4k+1\arq{k} 1, 2\arq{k} 4k+2,4k+3\arq{k} 3],\\
 r\{1^2, 3^{1},4^{4k+2} \}  & = & [0 \arq{k+1} 4k+4,  4k+3 \arq{k} 3,  2 \arq{k} 4k+2,  4k+5\arq{k+1} 1],\\
r\{1^2,3^1,4^{4k+3}\} & = & [0\arq{k+1} 4k+4, 4k+5\arq{k+1} 1, 2\arq{k+1} 4k+6,4k+3\arq{k} 3].
           \end{array}\right.\\[12pt]
(5) \left\{\begin{array}{rcl}
 r\{1^2, 3^{3},4^{4k} \}  & = & [ 0,3 \arq{k} 4k+3, 4k+2 \arq{k} 2,  5 \arq{k} 4k+5, 4k+4 \arq{k} 4,1 ],\\
 r\{1^2, 3^{3},4^{4k+2} \}  & = & [0,3\arq{k+1} 4k+7, 4k+6\arq{k+1} 2,  5 \arq{k} 4k+5, 4k+4 \arq{k} 4,1 ].
 \end{array}\right.\\[12pt]
 (7) \left\{\begin{array}{rcl}
 r\{1^3, 3^2,4^{4k+1} \}  & = & [0\arq{k} 4k, 4k+3\arq{k} 3, 2\arq{k+1} 4k+6, 4k+5, 4k+4, 4k+1\arq{k} 1],\\
 r\{1^3, 3^2,4^{4k+3} \}  & = & [0\arq{k+1} 4k+4, 4k+1\arq{k} 1, 2\arq{k+1} 4k+6, 4k+5, 4k+8, 4k+7\arq{k+1}3].
  \end{array}\right.\\
  \end{array}$$
\end{footnotesize}

To prove (3) and (6) take the following standard linear realizations.
Here $x\art{y} z$ means the arithmetic progression $x,  x+3, x+6,\ldots,x+3y=z$ if $x\leq z$,
or the arithmetic progression $x,x-3,x-6,\ldots,x-3y=z$ otherwise.
$$\begin{array}{l}
(3) \left\{\begin{array}{rcl}
 r\{1^1,2^1, 3^{3k+2},4^1 \}  & = & [0\art{k} 3k, 3k+4 \art{k+1} 1, 2 \art{k+1}  3k+5, 3k+3],\\
 r\{1^1,2^1, 3^{3k+3},4^1 \}  & = & [0\art{k} 3k, 3k+4 \art{k+1} 1, 2 \art{k+1}  3k+5, 3k+3\art{1} 3k+6].
 \end{array}\right.\\[12pt]
%    \end{array}$$
% $$\begin{array}{l}
 (6) \left\{\begin{array}{rcl}
 r\{1^2, 3^{3k+1},4^1 \}  & = & [0\art{k} 3k, 3k+4 \art{k+1} 1, 2 \art{k}  3k+2, 3k+3],\\
 r\{1^2, 3^{3k+3},4^1 \}  & = & [0\art{k+2} 3k+6, 3k+2 \art{k} 2, 1 \art{k+1}  3k+4, 3k+5].
 \end{array}\right.
  \end{array}$$
The statement follows by adding in each case $R\{1^{\tilde a}\}$ for all $\tilde a\geq 0$.
   \end{proof}

\begin{prop}\label{a34}
Suppose $c,d\geq 1$.
The lists $\{1^3, 2^b, 3^c, 4^d\}$ and $\{1^4, 2^b, 3^c, 4^d\}$ admit a  linear realization for all $b\geq 0$.
\end{prop}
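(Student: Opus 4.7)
My plan is to invoke Corollary \ref{cor5} to dispose of the cases already handled there: the case $a=3$ requires $b\geq 2$ and the case $a=4$ requires $b\geq 1$. This leaves the three sub-families (i) $L=\{1^3, 3^c, 4^d\}$, (ii) $L=\{1^3, 2, 3^c, 4^d\}$, and (iii) $L=\{1^4, 3^c, 4^d\}$, all with $c,d \geq 1$. In each of these, the small values of $c$ (essentially $c \leq 4$) are already covered by Lemma \ref{6} and Proposition \ref{M}; my task is thus to furnish linear realizations for the larger values of $c$.

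The central tool is Theorem \ref{th:2lr}: I would write $L = L_1 \cup L_2$ with both $L_1,L_2$ admitting standard linear realizations and conclude that $L$ has a linear realization. The natural choice is
$$L_1 = \{1^{a_1}, 3^{c_1}\}, \qquad L_2 = \{1, 2^{\varepsilon}, 3^{c_2}, 4^d\},$$
where $\varepsilon \in \{0,1\}$ matches the number of $2$'s in $L$, $a_1 = a-1$, and $c_1 + c_2 = c$. Theorem \ref{Capp}(4) furnishes $L_1$ for every $c_1 \geq 0$, while $L_2$ is furnished by Lemma \ref{6}(2) (when $\varepsilon = 1$, $c_2 \in \{1,2,3\}$, any $d$) or by Lemma \ref{6}(1) (when $\varepsilon = 0$, $c_2 \in \{3,4\}$ and $d$ is \emph{odd}). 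This immediately settles sub-family (ii) for every $c \geq 3$ and every $d$, and settles sub-families (i) and (iii) for every $c \geq 3$ with $d$ odd.

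The residual case is sub-families (i) and (iii) with $d$ \emph{even}. Here I would write $d = d_1 + d_2$ with $d_1, d_2$ both odd (possible since $d$ is even and $d \geq 2$) and apply Theorem \ref{th:2lr} to
$$L = \{1^{a_1}, 3^{c_1}, 4^{d_1}\} \;\cup\; \{1, 3^{c_2}, 4^{d_2}\},$$
where the second piece comes from Lemma \ref{6}(1) ($c_2 \in \{3,4\}$, $d_2$ odd), and the first piece is supplied by Lemma \ref{6}(4) for $c_1 = 1$ or by Lemma \ref{6}(1) (enlarged by $R\{1^{\tilde{a}}\}$) for $c_1 \in \{3,4\}$ with $d_1$ odd. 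This handles $c = c_1 + c_2 \in \{4,\ldots,8\}$. For larger $c$ I would iterate, prepending perfect realizations $R\{1^2, 3^{3q}\}$ from Lemma \ref{perfette}(1) via the $+$ concatenation to a standard base realization (such as $r\{1,3^3,4^2\}=[0,3,4,1,5,2,6]$ or $r\{1,3^4,4^2\}=[0,1,4,7,3,6,2,5]$), thereby bumping $c$ up by multiples of $3$ while controlling the count of $1$'s.

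The \textbf{main obstacle} is this last step: ensuring that, for every residue class of $c$ modulo $3$, there is a standard linear realization of a suitable auxiliary piece $\{1^{a_1}, 3^{c_1}, 4^{d_1}\}$ (or $\{1, 3^{c_2}, 4^{d_2}\}$ with $d_2$ even) to plug into the decomposition. Unlike the $d$-odd regime, where Lemma \ref{6}(1) directly supplies every needed block, in the $d$-even regime these blocks have to be assembled by hand from small explicit realizations combined with Lemma \ref{perfette}; verifying that the resulting case analysis covers all pairs $(c,d)$ with $c\geq 4$ and $d\geq 2$ even is the delicate point of the argument.
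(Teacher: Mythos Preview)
Your overall strategy---reduce via Corollary~\ref{cor5}, then decompose using $\ol$ with building blocks from Theorem~\ref{Capp} and Lemma~\ref{6}---is exactly the paper's, and your treatment of sub-family~(ii) and of (i),\,(iii) with $d$ odd matches the paper essentially verbatim.

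For sub-family~(iii) you work harder than necessary. The paper notes that Lemma~\ref{6}(4) supplies a standard realization of $\{1^2,3,4^d\}$ for \emph{every} $d\geq 0$ (not only $d$ odd), so the single decomposition
\[
r\{1^4,3^c,4^d\}=r\{1^2,3^{c-1}\}\ol r\{1^2,3^1,4^d\}
\]
settles all $c,d\geq 1$ at once. Your choice of split $\{1^3,\dots\}\ol\{1,\dots\}$ artificially forces the parity distinction on~$d$.

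For sub-family~(i) with $d$ even there is a genuine gap. Your split $d=d_1+d_2$ with $c_1\in\{1,3,4\}$ and $c_2\in\{3,4\}$ reaches only $c\leq 8$, and your ``iteration'' via $R\{1^2,3^{3q}\}$ prepended to the two explicit bases you give produces realizations with $d=2$ only. The list $\{1^3,3^9,4^4\}$, for example, is not covered: any $\ol$-decomposition into a piece with two $1$'s and a piece with one $1$ would require a standard $r\{1^2,3^{c_1},4^{d_1}\}$ with $c_1\in\{5,6\}$, which none of the building blocks you cite (Lemma~\ref{6}(1),(4),(5)) supplies, and your iteration cannot manufacture it for $d_1\neq 2$.

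The paper fills exactly this gap using the one ingredient you omitted, Lemma~\ref{6}(6), which provides standard realizations $r\{1^2,3^{c_1},4^1\}$ for every $c_1\not\equiv 2\pmod 3$. With $c_2\in\{3,4\}$ one then writes
\[
r\{1^3,3^c,4^d\}=r\{1^2,3^{c_1},4^1\}\ol r\{1^1,3^{c_2},4^{d-1}\},
\]
which covers every residue of $c$ modulo~$3$ and every even $d\geq 2$, for all $c\geq 4$.
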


\begin{proof}
Assume $a=3$: by Corollary \ref{cor5} we are left to the case $b\in \{0,1\}$.
If $b=1$, by Theorem \ref{Capp}(4) and Lemma \ref{6}(2) we have
$$r\{1^3, 2^1, 3^c, 4^d\}=r\{1^2, 3^{c-1}\}\ol r\{1^1, 2^1, 3^1, 4^d\}.$$
Suppose $b=0$.  If $c=1$, apply Lemma \ref{6}(4);
if $c=2$, apply  Lemma \ref{6}(7)  when $d$ is odd and Proposition \ref{M}(3)  when $d$ is even.
If $c\geq 3$ and $d$ is odd, by Theorem \ref{Capp}(4) and Lemma \ref{6}(1) we have
$$r\{1^3, 3^c, 4^d\}=r\{1^2, 3^{c-3}\} \ol r\{1^1, 3^3, 4^d\}.$$
If $c=3$ and $d$ is even, apply  Lemma \ref{6}(5).
If $c\geq 4$ and $d$ is even,  by Lemma \ref{6}, cases (1) and (6), we have
$$\begin{array}{rcl}
r\{1^3, 3^{3k+4}, 4^d\} & = & r\{1^2, 3^{3k+1},4^1\} \ol r\{1^1, 3^3, 4^{d-1}\},\\
r\{1^3, 3^{3k+5}, 4^d\} & = & r\{1^2, 3^{3k+1},4^1\} \ol r\{1^1, 3^4, 4^{d-1}\},\\
r\{1^3, 3^{3k+6}, 4^d\} & = & r\{1^2, 3^{3k+3},4^1\} \ol r\{1^1, 3^3, 4^{d-1}\}.
  \end{array}$$

Now, assume $a=4$: by Corollary \ref{cor5} we are left to the case $b=0$.
We can use Theorem \ref{Capp}(4) and Lemma \ref{6}(4):
$$r\{1^4,  3^c, 4^d\} =  r\{1^2, 3^{c-1}\} \ol r\{1^2, 3^1, 4^d\}.$$
\end{proof}

\begin{prop}\label{a2}
Suppose $c,d\geq 1$.
The list $\{1^2, 2^b, 3^c, 4^d\}$ admits a  linear realization for all $b\geq 1$.
\end{prop}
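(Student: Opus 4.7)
The plan is to prove Proposition~\ref{a2} by case analysis on $b$. In each case the aim is to write $r\{1^2, 2^b, 3^c, 4^d\}$ as a concatenation $rL_1 \ol rL_2$ via Theorem~\ref{th:2lr}, where the two factors have standard linear realizations already supplied by Theorem~\ref{Capp}, Theorem~\ref{EJC}, or Lemma~\ref{6}. The only real difficulty, compared with Proposition~\ref{a34}, is that with only two $1$s available any split must leave exactly one $1$ in each factor (up to the special role of $R\{1^1\}$).

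For $b \geq 2$ I would take
$$r\{1^2, 2^b, 3^c, 4^d\} = r\{1^1, 2^{b-1}, 3^{c-c_2}\} \ol r\{1^1, 2^1, 3^{c_2}, 4^d\}$$
with $c_2 \in \{1,2,3\}$. The second factor exists by Lemma~\ref{6}(2). The first factor is handled by Theorem~\ref{Capp}: for $b \geq 3$, condition~(6) applies for any $c_2$ since the exceptional family $(1,1,3k+5)$ does not arise; for $b = 2$ I would pick $c_2$ so that $c - c_2 \notin \{5,8,11,\ldots\}$, which is always possible since only residue $2 \pmod 3$ is forbidden. Boundary cases with $c - c_2 = 0$ fall under Theorem~\ref{Capp}(5).

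For $b = 1$ and $c \in \{1,2,3\}$ I invoke Lemma~\ref{6}(2) via $R\{1^1\} + r\{1^1, 2^1, 3^c, 4^d\}$. For $b = 1$ and $c \geq 4$ the plan is to split on the parity of $d$. When $d$ is odd, use
$$r\{1^1, 2^1, 3^{c - c_2}\} \ol r\{1^1, 3^{c_2}, 4^d\},$$
with $c_2 \in \{3,4\}$ chosen so that $c - c_2 \neq 3k + 5$, combining Theorem~\ref{Capp}(6) with Lemma~\ref{6}(1); this is possible because $c_2 = 3$ only fails when $c \equiv 2 \pmod 3$ and $c_2 = 4$ only fails when $c \equiv 0 \pmod 3$. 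When $d$ is even, for $c \in \{4,5,6,7\}$ use the analogous split
$$r\{1^1, 2^1, 3^{c_1}, 4^{d-1}\} \ol r\{1^1, 3^{c_2}, 4^1\}$$
via Lemma~\ref{6}(2) and~(1), with $d-1$ odd; for $c \geq 5$ use
$$r\{1^1, 2^1, 3^{c_1}, 4^1\} \ol r\{1^1, 3^{c_2}, 4^{d-1}\}$$
via Lemma~\ref{6}(3) and~(1), choosing $c_2 \in \{3,4\}$ so that $c_1 = c - c_2 \not\equiv 1 \pmod 3$, as required by Lemma~\ref{6}(3).

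The hard part will be the bookkeeping: verifying that for every admissible triple $(b,c,d)$ with $b,c,d \geq 1$ at least one of these splits is available and that $c_2$ can always be chosen to dodge the exceptional families of Theorem~\ref{Capp}(6) and Lemma~\ref{6}(3). This reduces to elementary residue-class arguments modulo $3$, but the case breakdown has to be laid out explicitly, and the small-parameter corner cases (such as $c = 4$ with $d$ even, which falls outside the $c \geq 5$ split) need the alternative split spelled out separately.
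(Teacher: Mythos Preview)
Your plan is correct and follows the same overall strategy as the paper: in every case the list is written as $rL_1\ol rL_2$ via Theorem~\ref{th:2lr}, with the two standard factors drawn from Theorem~\ref{Capp} and Lemma~\ref{6}. The organisation of the case analysis differs only cosmetically. For $b\geq 2$ the paper fixes $c_2\in\{1,2,3\}$ with $c-c_2\equiv 0\pmod 3$, which automatically avoids the exceptional family of Theorem~\ref{Capp}(6); your residue argument achieves the same. For $b=1$, $c\geq 4$, $d$ odd the paper's three displayed formulae are exactly your split with $c_2\in\{3,4\}$ chosen by residue.

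One point is worth noting: your treatment of $b=1$, $d$ even is actually more careful than the paper's. The paper's factorisation
\[
r\{1^2,2^1,3^{3k+4},4^d\}=r\{1^1,2^1,3^{3k},4^1\}\ol r\{1^1,3^4,4^{d-1}\}
\]
invokes Lemma~\ref{6}(3) for the first factor, but at $k=0$ this would require a standard linear realization of $\{1,2,4\}$, which does not exist (the vertex set is $\{0,1,2,3\}$ and the difference $4$ cannot occur). Your alternative split
\[
r\{1^1,2^1,3^{1},4^{d-1}\}\ol r\{1^1,3^{3},4^{1}\}
\]
via Lemma~\ref{6}(2) and~(1) handles $c=4$ cleanly. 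So your ``bookkeeping'' for the small-parameter corner case is not merely cosmetic: it patches a genuine, if easily fixed, gap in the paper's argument.
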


\begin{proof}
If $b\geq 2$, then we use a standard linear realization of Theorem \ref{Capp}(6) and one of Lemma~\ref{6}(2):
$$r\{1^2, 2^b, 3^{3k+\bar c} , 4^d\}=r\{1^1, 2^{b-1}, 3^{3k}\} \ol r\{1^1,2^1,3^{\bar c}, 4^d\},$$
where $\bar c=1,2,3$.

Assume now $b=1$. If $c\in \{1,2,3\}$, the result follows directly from Lemma \ref{6}(2).
So we may suppose $c\geq 4$.
If $d$ is odd, we use  a standard linear realization of Theorem \ref{Capp}(6) and one of Lemma \ref{6}(1):
$$\begin{array}{rcl}
r\{1^2,2^1, 3^{3k+4}, 4^d\} & = & r\{1^1,2^1, 3^{3k}\} \ol r\{1^1, 3^4, 4^{d}\},\\
r\{1^2,2^1, 3^{3k+5}, 4^d\} & = & r\{1^1,2^1, 3^{3k+1}\} \ol r\{1^1, 3^4, 4^{d}\},\\
r\{1^2,2^1, 3^{3k+6}, 4^d\} & = & r\{1^1,2^1, 3^{3k+3}\} \ol r\{1^1, 3^3, 4^{d}\}.
  \end{array}$$
Finally, if $d$ is even,
we use standard  realizations of Lemma \ref{6}, cases (1) and (3):
$$\begin{array}{rcl}
r\{1^2,2^1, 3^{3k+4}, 4^d\} & = & r\{1^1,2^1, 3^{3k},4^1\} \ol r\{1^1, 3^4, 4^{d-1}\},\\
r\{1^2,2^1, 3^{3k+5}, 4^d\} & = & r\{1^1,2^1, 3^{3k+2},4^1\} \ol r\{1^1, 3^3, 4^{d-1}\},\\
r\{1^2,2^1, 3^{3k+6}, 4^d\} & = & r\{1^1,2^1, 3^{3k+2},4^1\} \ol r\{1^1, 3^4, 4^{d-1}\}.
  \end{array}$$
\end{proof}

Note that in order to give a complete solution also for $a=2$ it remains to consider the lists
$\{1^2, 3^c, 4^d\}$ with $c=2$ or $c\geq 4$.

\begin{proof}[Proof of Theorem \rm{\ref{th1234}}]
The three cases $a=2$, $a\in \{3,4\}$ and $a\geq 5$ follow, respectively, from
Proposition \ref{a2}, Proposition \ref{a34} and Corollary \ref{cor5}.
\end{proof}

We remark that the cases $a=0,1$ cannot be solved using only linear realizations since,
for instance, there is no linear realization of the lists $\{2,3,4^6\}$ and $\{1,2,4^6\}$ (see \cite[Lemma 13]{PPnew}).

A similar result to Theorem \ref{th1234} can be obtained also for lists $\{1^a,2^b,3^c,4^d,5^e,6^f\}$.
In this case, we recall the following.

\begin{thm}\cite[Theorem 2.6]{PP1}\label{1235}
If  $a\geq 5$ the list $\{1^a, 2^b, 3^c, 5^e\}$ admits a standard linear realization.
\end{thm}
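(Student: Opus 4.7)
The plan is to construct a standard linear realization of $L = \{1^a, 2^b, 3^c, 5^e\}$ by splitting $L$ into two blocks, the first admitting a perfect linear realization and the second a standard linear realization, and combining them via the $+$ operation of Section \ref{sec:realization}. Since $a \geq 5$, there is latitude to shift $1$s between the blocks and thereby avoid the exceptional cases of Theorem \ref{Capp}.

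First, I would reduce the multiplicity of $5$ modulo $5$. By Lemma \ref{perfette}(1) applied with $x = 5$, there is a perfect linear realization $R\{1^4, 5^{5q}\}$ for every $q \geq 0$. Writing $e = 5q + r$ with $0 \leq r \leq 4$, the $+$ operation gives
$$rL = R\{1^4, 5^{5q}\} + r\{1^{a-4}, 2^b, 3^c, 5^r\},$$
so it suffices to produce a standard linear realization of the reduced list $L' = \{1^{a-4}, 2^b, 3^c, 5^r\}$ with $a - 4 \geq 1$ and $0 \leq r \leq 4$.

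To handle $L'$, I would apply Theorem \ref{th:2lr} and split $L' = L'_1 \cup L'_2$ with $L'_1 = \{1^{a_1}, 2^b, 3^c\}$ and $L'_2 = \{1^{a_2}, 5^r\}$ for some $a_1 + a_2 = a-4$. A standard linear realization of $L'_1$ is provided by Theorem \ref{Capp} for all but a handful of triples $(a_1, b, c)$, the notable exception being $(1, 1, 3k+5)$; by adjusting the split these can be avoided. For $L'_2$ with $0 \leq r \leq 4$, the realizations $r\{1^4, 5^r\}$ exist by Lemma \ref{odd2}(3) (for odd $r$) and Lemma \ref{odd1}(1) with $c=0$ (for even $r\geq 2$), both specialized to $x = 5$; the case $r = 0$ is trivial. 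Concatenation by Theorem \ref{th:2lr} then yields a linear realization of $L'$, which upgrades to a standard one since the perfect block sits on the left of the $+$ operation.

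The main obstacle is bookkeeping at the boundary $a = 5$: once $a_2 = 4$ is used for the $5$-block, only $a_1 = a - 8$ ones remain on the $\{1,2,3\}$-side, which is negative when $a < 8$. For $5 \leq a \leq 7$ one would either absorb more $1$s into the perfect block (by using a richer perfect realization for the $\{1,5\}$-part, or by prepending $R\{1^{t}\}$ with an appropriate $t$) or construct by hand short standard realizations $r\{1^{a_2}, 5^r\}$ with $a_2 \leq 3$ by seeding with a minimal path such as $[0,5,4,3,2,1]$ for $\{1^4,5\}$ and then applying the $\eta_5$/$\mu_5$ insertion machinery of Section \ref{sec:xodd} to regulate the number of $1$s that survive. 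A small residual collection of corner cases (for example when $r = 0$ coincides with $b = 0$ or $c = 0$, so Theorem \ref{Capp} has to be applied in its degenerate branches (4) or (5)) would then be checked explicitly.
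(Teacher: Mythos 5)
This statement is not proved in the paper you were given: it is imported verbatim from \cite[Theorem 2.6]{PP1}, so there is no internal proof to compare yours against, and your attempt must be judged on its own terms. It has a genuine gap at its central step. You build a realization of $L'=\{1^{a-4},2^b,3^c,5^r\}$ as $r\{1^{a_1},2^b,3^c\}\oplus r\{1^{a_2},5^r\}$ via Theorem~\ref{th:2lr} and then feed the result into the right-hand side of $R\{1^4,5^{5q}\}+rL'$. But the $+$ operation requires its right operand to be \emph{standard}, and the output of Theorem~\ref{th:2lr} is in general not standard: it begins with $\rev(\bm{\bar{g}})$, whose first entry is $|L_1'|-g_m$ where $g_m$ is the terminal vertex of the first factor, and this is $0$ only when that factor is \emph{perfect}. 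The paper says so explicitly (``the realization constructed in the proof of Theorem~\ref{th:2lr} is not standard and so cannot be used inductively''), and Theorem~\ref{Capp} supplies only standard, not perfect, realizations of $\{1^{a_1},2^b,3^c\}$. Your justification that the realization ``upgrades to a standard one since the perfect block sits on the left of the $+$ operation'' conflates the two operations: the perfect left block of $+$ says nothing about the starting vertex of the $\oplus$-output. Swapping the order (doing the $+$ inside and the $\oplus$ outside) yields a linear realization of $L$ that is again not standard, whereas standardness is exactly the property for which Theorem~\ref{1235} is invoked later in the paper.

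The boundary cases are also not repairable as sketched. For $5\leq a\leq 7$ and $1\leq r\leq 4$ you would need a standard realization of $\{1^{a_2},5^r\}$ with $a_2\leq 3$; none exists, since the path has vertex set $\{0,\dots,a_2+r\}$ and hence at most $a_2+r-4\leq r-1$ pairs of vertices at distance $5$, too few to supply $r$ edges of length $5$. Prepending $R\{1^{t}\}$ cannot help because the total number of $1$s is fixed at $a$, and the only perfect realizations involving $5$s available in this paper are $R\{1^4,5^{5q}\}$, which always consume exactly four $1$s. So even setting aside the standardness problem, your decomposition fails outright for $a\in\{5,6,7\}$ whenever $e\not\equiv 0\pmod 5$; and when $r=0$ and $a=5$ the exceptional triples $(1,1,3k+5)$ of Theorem~\ref{Capp}(6) cannot be dodged by re-splitting. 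A correct argument needs a genuinely different mechanism for small $a$ (which is what the direct constructions of \cite{PP1} provide), or an adaptation of the $\C_x$-type insertion machinery of Proposition~\ref{prop:sigmax} (note $\sigma_4$ adds pairs $\{3,5\}$) that keeps every intermediate realization standard.
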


\begin{prop}
The following lists $L$ admit a linear realization:
\begin{itemize}
\item[(1)] $L=\{1^a, 2^b, 3^c, 4^d, 5^e\}$ with $a \geq 8$ and $e\geq 1$;
\item[(2)] $L=\{1^a, 2^b, 3^c, 4^d, 5^e, 6^f\}$ with $a \geq 10$ and $f\geq 1$.
\end{itemize}
\end{prop}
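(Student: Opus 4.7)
The plan is to prove both items by partitioning $L$ as $L=L_1\cup L_2$ so that each $L_i$ admits a standard linear realization furnished by an earlier result in the paper, and then to invoke Theorem \ref{th:2lr} to concatenate the two into a linear realization of $L$. The hypotheses $a\geq 8$ and $a\geq 10$ are exactly what is needed so that enough $1$s are available to meet the threshold required by each sublist's reference theorem.

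For part~(1), the guiding observation is that Theorem \ref{1235} yields a standard linear realization of $\{1^{a_1},2^{b_1},3^{c_1},5^e\}$ as soon as $a_1\geq 5$ and $e\geq 1$, while the building blocks of Theorem \ref{th1234} (in particular Lemma \ref{6}(4), combined with a $R\{1^{\tilde a}\}$ prefix) yield a standard linear realization of $\{1^{a_2},3,4^d\}$ as soon as $a_2\geq 3$ and $d\geq 1$. The generic split, valid when $c\geq 1$ and $d\geq 1$, is
\[
L_1=\{1^5,2^b,3^{c-1},5^e\},\qquad L_2=\{1^{a-5},3,4^d\},
\]
dispatched by Theorem \ref{1235} and Theorem \ref{th1234} respectively; Theorem \ref{th:2lr} then finishes. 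If $d=0$, Theorem \ref{1235} applied directly to $L$ suffices; if $c=0$ and $d\geq 1$, I would instead reshuffle two $2$s into $L_2$, taking
\[
L_1=\{1^5,2^{b-2},5^e\},\qquad L_2=\{1^{a-5},2^2,4^d\},
\]
handled by Theorem \ref{1235} and Theorem \ref{EJC}, valid whenever $b\geq 2$.

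For part~(2), the same strategy goes through with Proposition \ref{all_even} (the engine of Theorem \ref{pari}(1) at $t=6$) absorbing the $6^f$ block. The split is
\[
L_1=\{1^5,2^b,4^d,6^f\},\qquad L_2=\{1^{a-5},3^c,5^e\}.
\]
Proposition \ref{all_even} delivers a standard linear realization of $L_1$, since it requires exactly $t-1=5$ ones and the hypothesis $f\geq 1$; Theorem \ref{1235} delivers a standard linear realization of $L_2$, since $a-5\geq 5$ and $e\geq 1$. The threshold $a\geq 10=5+5$ is precisely the sum of the two halves' requirements on $1$s. Degenerate patterns cause no real trouble: if $e=0$, then $L_2=\{1^{a-5},3^c\}$ is a standard linear realization by Theorem \ref{Capp}(4) (as $a-5\geq 5\geq 2$), while zeros among $b,c,d$ are already allowed by Proposition \ref{all_even} and Theorem \ref{1235}.

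The main obstacle is the boundary regime of part~(1) where $c=0$, $d\geq 1$, and $b\leq 1$: now $L_2$ would have to be $\{1^{a-5},2^b,4^d\}$ with too few $2$s for Theorem \ref{EJC}. I would handle this by the alternative split $L_1=\{1^5,2^b,5^e\}$ (covered by Theorem \ref{1235} with $c_1=0$) and $L_2=\{1^{a-5},4^d\}$, then verifying directly that the two-edge-length list $\{1^{a-5},4^d\}$ admits a standard linear realization in the small range of sizes that can occur (constructions such as $[0,4,3,2,1,5,6]$ for $\{1^4,4^2\}$ or $[0,4,3,7,6,5,1,2]$ for $\{1^4,4^3\}$ generalize routinely). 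Once this finite bookkeeping is carried out, no conceptual new ingredient beyond Theorem \ref{th:2lr} and the previously established theorems \ref{Capp}, \ref{EJC}, \ref{th1234}, \ref{1235}, and Proposition \ref{all_even} is required.
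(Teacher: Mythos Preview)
Your concatenation strategy via Theorem~\ref{th:2lr} is exactly the paper's approach, and for part~(2) your split differs from the paper's only in which side receives the $2^b$ block: the paper uses $r\{1^{a-5},4^d,6^f\}\ol r\{1^5,2^b,3^c,5^e\}$, while you use $r\{1^5,2^b,4^d,6^f\}\ol r\{1^{a-5},3^c,5^e\}$. Both work.

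For part~(1), however, you have made life much harder than necessary and left a small gap. The paper uses the single uniform split
\[
r\{1^a,2^b,3^c,4^d,5^e\}=r\{1^{a-5},4^d\}\ol r\{1^5,2^b,3^c,5^e\},
\]
with the first factor coming from Proposition~\ref{all_even} (take $t=4$, $b_2=0$; this gives $\{1^3,4^d\}$, then prepend $R\{1^{a-8}\}$) and the second from Theorem~\ref{1235}. No case analysis on $b,c,d$ is needed. Your ``main obstacle'' subcase $c=0$, $b\leq 1$ is therefore no obstacle at all: the list $\{1^{a-5},4^d\}$ is exactly what Proposition~\ref{all_even} produces, a tool you already invoke in part~(2). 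Note also that your claim that this case reduces to ``finite bookkeeping'' is not right---neither $a$ nor $d$ is bounded, so the family of lists $\{1^{a-5},4^d\}$ is infinite and ``generalize routinely'' is not a proof. Once you recognise Proposition~\ref{all_even} applies here, your argument collapses to the paper's two-line proof.
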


\begin{proof}
Concatenate a standard linear realization of Proposition \ref{all_even} with one of Theorem \ref{1235}:
$$\begin{array}{ll}
r\{1^a,2^b,3^c,4^d,5^e\}=r\{1^{a-5},4^d\} \ol r\{1^5,2^b,3^c,5^e\} & \textrm{if } a\geq 8 \textrm{ and } e\geq 1,\\
r\{1^a,2^b,3^c,4^d,5^e,6^f\}=r\{1^{a-5}, 4^d, 6^f\}\ol r\{1^5,2^b,3^c,5^e\} & \textrm{if } a\geq 10 \textrm{ and } f\geq 1.
  \end{array}$$
\end{proof}

\section{On the lists with many distinct elements}\label{sec:small}

In this section we give constructions of linear realizations for lists that are ``close" to $L^*$,
in a way we make precise below, and also for lists of the form $\{ 1^{a_1}, \ldots, m^{a_m} \}$ with $a_1 \geq a_2 \geq \cdots \geq a_m$.
We recall that  $L^* = \{ 1^2, 2^2, \ldots, m^2 \}$ if $v = 2m+1$, and $L^* = \{ 1^2, 2^2, \ldots, (m-1)^2, m^1 \}$ if $v=2m$.
Graceful permutations are central to both constructions and we make heavy use of the following result:

\begin{lem}\label{lem:g1}{\rm \cite{Kotzig73}}
For all~$v$ and all~$x$ with  $0 \leq x < v$, there is a graceful permutation $[g_1, \ldots, g_v]$ with $g_1=x$.
\end{lem}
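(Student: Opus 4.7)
The plan is to give an explicit construction, using case analysis on~$x$. Two elementary symmetries are at hand: the \emph{reverse} $[g_v,g_{v-1},\ldots,g_1]$ and the \emph{complement} $[v-1-g_1,\ldots,v-1-g_v]$ of a graceful permutation are again graceful. Starting from the Walecki permutation $[0,v-1,1,v-2,\ldots,\lfloor v/2\rfloor]$, which begins at $0$ and ends at $\lfloor v/2\rfloor$, these symmetries immediately yield graceful permutations beginning at each of $0$, $v-1$, $\lfloor v/2\rfloor$, and $\lceil v/2\rceil$. By the complement symmetry it then suffices to treat $0\le x\le\lfloor(v-1)/2\rfloor$.

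For $x$ in this reduced range I would exhibit the permutation as the concatenation of a ``Walecki head'' and a ``zigzag tail''. The head starts at $x$ and, after a first step to the far side of $\{0,\ldots,v-1\}$, emulates Walecki by alternately landing on a small value and a large value with step sizes decreasing by one at each move; this uses up the large differences and visits all the ``extreme'' vertices (those far from $x$). The tail takes over once the remaining unvisited vertices form a short contiguous block near $x$, and realizes the remaining small differences by the zigzag $[\ldots,x+1,x-1,x+2,x-2,\ldots]$. The two exact extremes of the reduced range serve as sanity checks: when $x=0$ the head is the whole Walecki permutation and the tail is empty, while when $x=\lfloor(v-1)/2\rfloor$ the head is trivial and the tail is the reverse-Walecki zigzag.

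The main obstacle is matching the head to the tail so that three conditions hold simultaneously: the vertex set partitions exactly between head and tail, the difference set $\{1,\ldots,v-1\}$ partitions exactly between them, and the edge joining the last vertex of the head to the first vertex of the tail has the unique remaining length. Meeting these constraints forces a few sub-cases according to the parities of $x$ and $v$ and according to which end of the extreme block the head terminates at; in each sub-case I would write the head in closed form as a shifted Walecki sequence and the tail as an explicit zigzag, then verify that each element of $\{0,\ldots,v-1\}$ appears once and that the consecutive absolute differences realize each value of $\{1,\ldots,v-1\}$ by a short telescoping check. Very small values of~$v$, where the head/tail framework would overlap or collapse, are handled by direct inspection.
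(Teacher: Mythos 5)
The paper does not prove this lemma: it is quoted from Kotzig \cite{Kotzig73}, so there is no in-paper argument to compare against. Judged on its own terms, your proposal establishes only the easy half. The reduction via reverse and complement to $0\le x\le\lfloor (v-1)/2\rfloor$ is correct, but everything after that is a plan rather than a proof: no closed form for the head or the tail is written down, and the step you yourself flag as ``the main obstacle'' --- simultaneously partitioning the vertices, partitioning the differences, and controlling the length of the join edge --- is precisely where the entire content of the lemma lives. It is deferred, not resolved.

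Moreover, the plan as described cannot work literally. The tail is supposed to be a zigzag $[\ldots,x+1,x-1,x+2,x-2,\ldots]$ near $x$, but $x$ is the \emph{first} vertex of the permutation and has already been consumed by the head; a zigzag centred at $x$ either revisits $x$ or, started from $x\pm1$, produces differences $2,3,4,\ldots$ and never realizes the difference $1$. A head that starts at $x$, jumps to the far side and then runs Walecki-style with decreasing steps leaves behind either a middle block with $x$ punched out of it (hence not contiguous), or --- if the step sizes decrease from $v-1-x$ --- the block $\{0,\ldots,x-1\}$ together with the unused differences $\{v-x,\ldots,v-1\}$, which cannot be realized inside a set of diameter $x-1<v-x$ when only one join edge is available. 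Small cases confirm that the details are not a routine parity check: for $v=7$, $x=2$ both the zigzag-then-jump and the jump-then-Walecki candidates fail, while a working realization such as $[2,4,3,6,0,5,1]$ has the structure (graceful permutation of a middle block containing $x$) followed by a join of length $k$ followed by a Walecki path alternating between the two extreme blocks; making that structure work for every $x$ requires either a recursion on $v$ (prescribing both endpoints of the middle block's permutation) or a considerably more careful explicit formula. As written, the proposal is an outline with a genuine gap, not a proof.
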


Horak and Rosa~\cite{HR} introduced a graph~$\mathcal{G}_v$ for prime~$v$ where the vertices are lists of size~$v-1$
with underlying set a subset of~$\{ 1, \ldots, (v-1)/2 \}$.  Two vertices $L$ and  $M$ are adjacent in~$\mathcal{G}_v$ if
$M = L \setminus \{ x \} \cup \{y\}$ for some~$x \neq y$.  Buratti's Conjecture is equivalent to saying that all
vertices of this graph are realizable.  Horak and Rosa showed that $L^*$ and its neighbors are realizable.

If $v$ is composite we make a similar definition for $\mathcal{G}_v$.  The only difference is that we now
insist that the vertex labels meet the necessary conditions of the $\BHR$ Problem, and so Conjecture
\ref{HR iff B} is equivalent to saying that all vertices of this graph are realizable.

Theorems~\ref{th:gkh} and~\ref{th:gmm} cover many instances of the conjecture that are distance~$2$ from $L^*$ in $\mathcal{G}_v$.

\begin{thm}\label{th:gkh}
Let~$v = 2m$ or $2m+1$ and let $1 \leq k < m$.
$\BHR(L)$ holds when  $L = L^* \setminus \{k,k+1\}   \cup \{z_1,z_2 \}$ for any $1 \leq z_1 \leq k$ and $1 \leq z_2 \leq m$.
\end{thm}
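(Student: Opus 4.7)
The plan is to exhibit a standard linear realization of $L$ by concatenating, via Theorem~\ref{th:2lr}, two standard linear realizations of ``one-swap'' sublists of $L$. Since every element of $L$ is bounded by $\lfloor v/2 \rfloor$, Remark~\ref{cyclin} guarantees that such a linear realization is automatically a cyclic realization of $L$, which is what is required for $\BHR(L)$.

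In the case $v = 2m+1$, I would take the decomposition
\[
L_1 \;=\; \bigl(\{1, 2, \ldots, m\} \setminus \{k\}\bigr) \cup \{z_1\},
\qquad
L_2 \;=\; \bigl(\{1, 2, \ldots, m\} \setminus \{k+1\}\bigr) \cup \{z_2\};
\]
a direct multiset computation confirms that $L_1 \cup L_2 = L$, using the facts that every $i \notin \{k,k+1\}$ contributes once to each of $L_1, L_2$ (for a total of two, plus any extra contributions from $z_1$ or $z_2$), that $k$ appears only in $L_2$, and that $k+1$ appears only in $L_1$. In the case $v = 2m$ an analogous splitting is used with $|L_1| = m$ and $|L_2| = m-1$, with a minor adjustment when $k + 1 = m$ so that both perturbations can be absorbed.

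Each $L_i$ is then a near-graceful list: it agrees with the graceful list $\{1, 2, \ldots, n\}$ (for $n = m$ or $n = m-1$) except that a single length $j \in \{k, k+1\}$ has been replaced by $z_i$. The central task therefore reduces to the following sub-claim: for all $n \geq 1$ and all $1 \leq j, z \leq n$, the list $\{1, \ldots, n\} \setminus \{j\} \cup \{z\}$ admits a standard linear realization on $n+1$ vertices. If $z = j$ the list is graceful and Lemma~\ref{lem:g1} applies directly. If $z \neq j$, one begins with a graceful permutation $\bm{g}$ of $\{0, 1, \ldots, n\}$ supplied by Lemma~\ref{lem:g1} with a carefully chosen starting vertex, locates the unique edge of length $j$ inside $\bm{g}$, removes that edge to split $\bm{g}$ into two subpaths, and reconnects the subpaths by an edge of length $z$. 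Reversal and complementation are then used to arrange that the resulting Hamilton path starts at $0$.

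Once standard linear realizations of $L_1$ and $L_2$ have been produced in this way, Theorem~\ref{th:2lr} yields a linear realization of $L_1 \cup L_2 = L$, and by Remark~\ref{cyclin} this is also a cyclic realization of $L$, proving $\BHR(L)$. The principal technical difficulty lies in proving the sub-claim uniformly: a case analysis on the parities of $n$ and $j$ and on the sign of $z - j$ is needed, and in some subranges the direct cut-and-reconnect performed on the Walecki permutation fails to produce an edge of the exact required length, forcing one to re-root using the full flexibility of Lemma~\ref{lem:g1} or to switch to a different graceful permutation altogether. The asymmetric hypothesis $z_1 \leq k$ versus $z_2 \leq m$ in the statement of the theorem marks precisely the boundary of where this analysis remains tractable.
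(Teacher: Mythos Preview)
Your decomposition $L = L_1 \cup L_2$ is correct, but the sub-claim on which the whole argument rests is false. Take $n = m$, $j = k+1 < m$ and $z = z_2 = m$, which is permitted by the hypothesis $1 \leq z_2 \leq m$. Then
\[
L_2 \;=\; \bigl(\{1,\ldots,m\}\setminus\{k+1\}\bigr)\cup\{m\}
\]
contains the length $m$ with multiplicity two, yet on the vertex set $\{0,1,\ldots,m\}$ there is exactly one edge of absolute difference $m$, namely $\{0,m\}$. Hence $L_2$ has no linear realization at all, standard or otherwise, and Theorem~\ref{th:2lr} cannot be invoked. (A concrete instance: $v=11$, $m=5$, $k=2$, $z_2=5$ gives $L_2=\{1,2,4,5,5\}$ on six vertices.) No amount of re-rooting or changing the graceful permutation can repair this, because the obstruction is a counting argument on available edges, not a failure of a particular construction. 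The asymmetry $z_1\le k$ versus $z_2\le m$ is therefore \emph{not} the boundary of tractability for your method; your method already breaks strictly inside that range.

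The paper sidesteps this obstruction by partitioning the \emph{vertex} set rather than the edge-length list. It writes $\{0,\ldots,v-1\}$ as three consecutive blocks of sizes $k$, $1$, and $v-k-1$, puts a graceful permutation on each of the two outer blocks, and uses Lemma~\ref{lem:g1} to fix the endpoints adjacent to the middle vertex $k$ so that the two join-edges have lengths $z_1$ and $z_2$. The second block has $v-k-1 > m$ vertices, so its internal absolute differences run all the way up to $v-k-2$; when these are reduced to edge-lengths in $K_v$ they fold back and supply the doubled lengths $(k+2)^2,\ldots,m^2$ automatically. The resulting Hamiltonian path is thus a \emph{cyclic} realization of $L$ directly, and is in general not a linear realization of $L$ at all---which is precisely why an approach through Theorem~\ref{th:2lr} cannot reproduce it.
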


\begin{proof}
Let $\bm{g} = [g_1, \ldots, g_k]$ be a graceful permutation of length~$k$ with $g_k = k-z_1$, which exists by Lemma~\ref{lem:g1}.
Let $\bm{h} = [h_1, \ldots, h_{v-k-1}]$ be a graceful permutation of length~$v-k-1$ with~$h_1 = z_2-1$, which again exists by
Lemma~\ref{lem:g1}.  The differences of~$\bm{h}$ in $\Z_v$ are
$\{ 1, \ldots, k+1, (k+2)^2, \ldots, m^2 \}$ if $v$ is odd and $\{ 1, \ldots, k+1, (k+2)^2, \ldots, (m-1)^2, m \}$ if $v$ is even.

The concatenation of $\bm{g}, [k]$, and $\bm{h} + k+1$ has the differences
$$ \{ 1, \ldots, k-1 \} \cup \{ k-g_k , h_1+k+1-k  \} \cup \{ 1, \ldots, k+1, (k+2)^2, \ldots, m^2 \} $$
when $v$ is odd, and
$$ \{ 1, \ldots, k-1 \} \cup \{ k-g_k , h_1+k+1-k  \} \cup \{ 1, \ldots, k+1, (k+2)^2, \ldots, (m-1)^2, m \} $$
when $v$ is even.
Now, $k-g_k = z_1$ and $h_1+k+1-k = z_2$, completing the proof.
\end{proof}

\begin{ex}\label{exa:gkh}
In the notation of Theorem~\ref{th:gkh}, let $v = 2m+1 = 17$, $k=6$ and $(z_1,z_2) = (2,4)$.
Take $\bm{g} = [3,2,0,5,1,4]$, a graceful permutation of length $k=6$ with final element $k-z_1 = 4$.
Take $\bm{h} = [3,6,0,9,1,8,4,5,7,2]$, a graceful permutation of length $v-k-1 = 10$ with first element $z_2 - 1 = 3$.
Then the proof of Theorem~\ref{th:gkh} gives the realization
$$[3,2,0,5,1,4,6,10,13,7,16,8,15,11,12,14,9]$$
of $\{ 1^2, \ldots, 8^2 \} \setminus \{6,7\} \cup \{2,4\}$.
\end{ex}

A variation of the construction works for some cases with the removed elements equal to each other:

\begin{thm}\label{th:gmm}
If~$v = 2m+1$ then $\BHR(L)$ holds when $L = L^* \setminus \{m,m\}   \cup \{z_1,z_2 \}$ for any  $1 \leq z_1,  z_2 < m$.
If $v = 2m$ then $\BHR(L)$ holds when $L = L^* \setminus \{m-1,m-1\}   \cup \{z_1,z_2 \}$ for any  $1 \leq z_1,  z_2 < m-1$.
\end{thm}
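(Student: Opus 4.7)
The plan is to adapt the construction from the proof of Theorem~\ref{th:gkh}: build the Hamiltonian path as a concatenation of two graceful permutations $\bm{g}$ and $\bm{h}$ joined by a middle structure whose linking edges realize the prescribed elements, with Lemma~\ref{lem:g1} (and its reverse) used throughout to prescribe endpoints. The guiding principle is to choose the lengths of $\bm{g}$ and $\bm{h}$ so that neither contributes internally an edge of the length being deleted from $L^*$---namely $m$ in the odd case and $m-1$ in the even case.

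For the odd case $v = 2m+1$, I would set $k = m$ in the scheme of Theorem~\ref{th:gkh}. Take $\bm{g}$ graceful of length $m$ on $\{0,\ldots,m-1\}$ with $g_m = m-z_1$ (applying Lemma~\ref{lem:g1} to the reverse), a single middle vertex $m$, and $\bm{h}$ graceful of length $m$ on $\{0,\ldots,m-1\}$ with $h_1 = z_2-1$, translated by $m+1$ to the vertex set $\{m+1,\ldots,2m\}$. Each graceful permutation has maximum difference $m-1$, so the $2(m-1)$ internal edges contribute exactly $\{1^2,\ldots,(m-1)^2\}$. The two linking edges $g_m \to m$ and $m \to h_1+m+1$ contribute lengths $z_1$ and $z_2$ respectively (these differences equal the lengths since $z_1,z_2 < m$). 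The edge-length multiset of the resulting Hamiltonian path on $\Z_{2m+1}$ is therefore $\{1^2,\ldots,(m-1)^2,z_1,z_2\}$, as required.

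For the even case $v = 2m$, the naive analogue fails because any graceful permutation of length $m$ on $m$ consecutive integers contains a length-$(m-1)$ edge, which is forbidden in $L$. The remedy is to shorten both graceful permutations to length $m-1$ and to employ two middle vertices: take $\bm{g}$ graceful of length $m-1$ on $\{0,\ldots,m-2\}$ with $g_{m-1} = m-1-z_1$, and $\bm{h}$ graceful of length $m-1$ on $\{0,\ldots,m-2\}$ with $h_1 = m-1-z_2$, translated by $m$ to $\{m,\ldots,2m-2\}$. The path is then the concatenation $\bm{g}, [m-1], [2m-1], \bm{h}+m$. The internal edges contribute $\{1^2,\ldots,(m-2)^2\}$, while the three linking edges yield $z_1$ (from $g_{m-1}\to m-1$, of difference $z_1 \leq m-2$), $m$ (from $m-1 \to 2m-1$, since these vertices are antipodal in $\Z_{2m}$), and $z_2$ (from $2m-1 \to h_1+m$, of difference $z_2 \leq m-2$).

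The main obstacle---and the point where the even case genuinely diverges from the odd one---is identifying this two-middle-vertex arrangement, with the crucial observation that the edge $m-1 \to 2m-1$ automatically has length $m$ and thereby simultaneously supplies the required length-$m$ edge while sidestepping the unwanted length-$(m-1)$ edges that a graceful permutation of length $m$ would force. Once the structure is in hand, the existence of graceful permutations with the prescribed endpoints is immediate from Lemma~\ref{lem:g1}, and the rest is a direct length calculation.
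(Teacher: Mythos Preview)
Your proposal is correct and follows essentially the same approach as the paper: the odd case uses two graceful permutations of length~$m$ joined through the single vertex~$m$, and the even case uses two graceful permutations of length~$m-1$ joined through the pair $m-1,\,2m-1$, with Lemma~\ref{lem:g1} prescribing the endpoints in each case. Your explanation of why the even case needs two middle vertices (to avoid an internal edge of length~$m-1$ while still producing the required length-$m$ edge) is a helpful gloss, but the construction itself is the same.
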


\begin{proof}
Let $v = 2m+1$ and $1 \leq z_1,  z_2 < m$.  Let $\bm{g} = [g_1, \ldots, g_m]$ be a graceful permutation of length~$m$ with $g_m = m-z_1$.
Let $\bm{h} = [h_1, \ldots, h_{m}]$ be a graceful permutation of length~$m$ with~$h_1 = z_2-1$.
Then the concatenation of $\bm{g}$, $[m]$ and  $\bm{h} + m+1$ is the required sequence as we have the differences
$$ \{ 1, \ldots, m-1 \} \cup \{ m-g_m , (h_1+m+1) -m  \} \cup \{ 1, \ldots, m-1 \},$$
and $m-g_m = z_1$ and $(h_1+m+1)-m = z_2$.

Let $v = 2m$ and  $1 \leq z_1,  z_2 < m-1$.  Let $\bm{g} = [g_1, \ldots, g_{m-1}]$ be a graceful permutation of length~$m-1$
with $g_{m-1} = m-1-z_1$.  Let $\bm{h} = [h_1, \ldots, h_{m-1}]$ be a graceful permutation of length~$m-1$ with~$h_1 = m-1-z_2$.
Then the concatenation of $\bm{g}$, $[m-1, 2m-1]$ and  $\bm{h} + m$ is the required sequence as we have the differences
$$ \{ 1, \ldots, m-2 \} \cup \{ m-1-g_{m-1} , m, 2m-1 - (h_1+m)  \} \cup \{ 1, \ldots, m-2 \},$$
and $m-1-g_m = z_1$ and $2m-1 - (h_1+m) = z_2$.
\end{proof}

\begin{ex}
In the notation of Theorem~\ref{th:gmm}, let $v = 2m+1 = 21$ and $(z_1, z_2) = (6,4)$.
Take $\bm{g} = [5,3,8,0,9,2,6,7,1,4]$, a graceful permutation of length $m=10$ with final element $m-z_1 = 4$.
Take $\bm{h} = [3,6,0,9,1,8,4,5,7,2]$, a graceful permutation of length $m = 10$ with first element $z_2 - 1 = 3$.
Then the proof of Theorem~\ref{th:gkh} gives the realization
$$[5,3,8,0,9,2,6,7,1,4, 10, 14,17,11,20,12,19,15,16,18,13   ]$$
of $\{ 1^2, \ldots, 10^2 \} \setminus \{10,10\} \cup \{6,4\}$.
\end{ex}

While the results of Theorems~\ref{th:gkh} and~\ref{th:gmm} may seem narrow as stated, we can use the same idea described at the end of
Section~\ref{sec:xodd} to extend their scope. For example, considering the realization in Example~\ref{exa:gkh} and multiplying by $2$
(modulo~$17$), we see that $\BHR(L)$ holds for $L = \{ 1^2, \ldots, 8^2 \} \setminus \{3,5\} \cup \{4,8\}$.
With this method we cover a large number of cases at distance~$2$ from~$L^*$ in~$\mathcal{G}_v$.

Theorems~\ref{th:gkh} and~\ref{th:gmm} were proved by concatenating translations of graceful permutations while
using Lemma~\ref{lem:g1} to control the differences introduced at the joins.  The same is true for Theorem~\ref{th:desc},
which easily implies Theorem \ref{66}.

\begin{thm}\label{th:desc}
Let $L = \{ 1^{a_1}, \ldots, m^{a_m} \}$ with $a_1 \geq a_2 \geq \cdots \geq a_m > 0$.  Then~$L$ has a standard linear realization.
\end{thm}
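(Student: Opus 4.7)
My plan is to construct the standard linear realization by concatenating translated graceful permutations, using the rows of the Young diagram of $L$ as the decomposition and controlling the ``join'' differences via Lemma~\ref{lem:g1}, in the same spirit as the proofs of Theorems~\ref{th:gkh} and~\ref{th:gmm}. Writing $\lambda_r = \max\{i : a_i \geq r\}$ for $r = 1, \ldots, a_1$ gives a non-increasing row sequence with $\lambda_1 = m$ and $\sum_r \lambda_r = |L|$, and $L = \bigcup_{r=1}^{a_1}\{1, 2, \ldots, \lambda_r\}$ as multisets.

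I would realize the rows one at a time from the largest to the smallest, maintaining the following structure: after row $r$ the path has covered $\{0, 1, \ldots, v_r - 1\}$ (where $v_r = 1 + \sum_{s \leq r}\lambda_s$) and ends at some vertex $p_r$, and row $r+1$ extends through the $\lambda_{r+1}$ new vertices $\{v_r, \ldots, v_{r+1}-1\}$ while contributing exactly the differences $\{1, 2, \ldots, \lambda_{r+1}\}$. After translating so $p_r$ becomes $0$, row $r+1$ lives on the vertex set $\{0\} \cup \{g_{r+1}, g_{r+1}+1, \ldots, g_{r+1} + \lambda_{r+1} - 1\}$ with gap $g_{r+1} = v_r - p_r$; since the cluster has diameter $\lambda_{r+1} - 1$, the difference $\lambda_{r+1}$ must come from the first edge $0 \to \lambda_{r+1}$ (which needs $g_{r+1} \leq \lambda_{r+1}$), and the remaining $\lambda_{r+1} - 1$ edges must form a graceful permutation of the $\lambda_{r+1}$ consecutive integers in the cluster starting at $\lambda_{r+1}$, which is supplied by Lemma~\ref{lem:g1}. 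The first row ($g_1 = 1$) is essentially the Walecki construction on $\{0, 1, \ldots, m\}$.

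The main obstacle is propagating the gap invariant $g_{r+1} \leq \lambda_{r+1}$: this requires choosing the inner graceful permutation of row $r$ so that its endpoint $q_r$ lies in the prescribed window $[g_r + \lambda_r - \lambda_{r+1},\, g_r + \lambda_r - 1]$ of length $\lambda_{r+1}$. Because Lemma~\ref{lem:g1} directly controls only one endpoint of a graceful permutation, closing the induction will likely require either a refinement of Kotzig's construction that gives two-endpoint control (at least within a window), or else introducing one or two bridge edges per row in the manner of Theorems~\ref{th:gkh} and~\ref{th:gmm} to absorb the parity obstructions that the naive Young-diagram chaining cannot avoid. The final row $r = a_1$ carries no further invariant and can be completed freely. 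The non-increasing structure of $\{\lambda_r\}$ is what makes the window lengths stay large enough across the iteration for this program to have a chance of working, and I would close the argument by a case analysis on the parities of $\lambda_r$ and $g_r$.
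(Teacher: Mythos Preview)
Your framework is exactly right---the Young-diagram decomposition $L=\bigcup_r\{1,\ldots,\lambda_r\}$ and the idea of concatenating translated graceful permutations with the joins controlled by Lemma~\ref{lem:g1}---but you have chosen the wrong direction, and this is what creates the two-endpoint obstruction you flag as the ``main obstacle''.

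The paper processes the rows from \emph{smallest to largest}: write $L=\{1,\ldots,k_1\}\cup\cdots\cup\{1,\ldots,k_q\}$ with $k_1\le\cdots\le k_q$, take a graceful permutation $\bm{g_i}$ of length $k_i$ for $i<q$ (and of length $k_q+1$ for $i=q$), and concatenate $\bm{g_1},\ \bm{g_2}+k_1,\ \bm{g_3}+k_1+k_2,\ldots$ with the single requirement $g_{i+1,1}=g_{i,k_i}$. The join then contributes exactly the difference $k_i$, and the internal differences of $\bm{g_i}$ supply $\{1,\ldots,k_i-1\}$. The point is that $g_{i,k_i}\in\{0,\ldots,k_i-1\}\subseteq\{0,\ldots,k_{i+1}-1\}$ because $k_i\le k_{i+1}$, so Lemma~\ref{lem:g1} lets you choose $\bm{g_{i+1}}$ to start wherever $\bm{g_i}$ happened to end. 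You never need to control where $\bm{g_i}$ ends; you simply read off its last entry and feed it forward. Thus one-endpoint control suffices and the induction closes immediately.

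In your largest-to-smallest order the analogous inclusion fails (the next block is shorter), which is precisely why you are forced to constrain both endpoints and reach for a strengthening of Kotzig's lemma that is not available. Reversing the order removes the obstacle entirely; no bridge edges, parity cases, or windowed two-endpoint control are needed.
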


\begin{proof}
Write
\begin{eqnarray*}
L &=& \{ 1, \ldots, k_1\} \cup \{1, \ldots, k_2 \} \cup \cdots \cup \{ 1, \ldots, k_q \} \\
  & = & \{ 1, \ldots, k_1 -1 \} \cup \{1, \ldots, k_2 -1 \} \cup \cdots \cup \{ 1, \ldots, k_{q-1} -1 \} \cup\\
  &&\{ 1, \ldots, k_q \} \cup \{ k_1, \ldots, k_{q-1} \}
\end{eqnarray*}
with $k_1 \leq \cdots \leq k_q = m$.
Set~$L_i = \{ 1, \ldots, k_i -1 \}$ for~$1 \leq i < q$ and $L_q = \{ 1, \ldots, k_q \}$.

For $1 \leq i \leq q-1$, let $\bm{g_i} = [g_{i,1}, \ldots, g_{i,k_i }]$ be a graceful permutation of length~$k_i $
and let $\bm{g_q} = [g_{q,1}, \ldots, g_{q,k_q+1}] $ be a graceful permutation of length~$k_q+1$.  Construct these so that $g_{1,1} = 0$ and $g_{i+1,1} = g_{i,k_i}$ for each~$i$, which is possible by Lemma~\ref{lem:g1}.

Consider the sequence obtained by concatenating the following sequences:
$$ \bm{g_1}, \ \bm{ g_2} + k_1 , \  \bm{g_3} + k_1 + k_2 , \ldots, \ \bm{ g_{q}} +  k_1 + \cdots + k_{q-1}.$$
Internally to the translations of linear realizations we get the differences
$  L_1 \cup \cdots \cup L_q $ as each $\bm{g_i}$ is a graceful permutation.
At the $i$th join we have
$$(g_{i+1,1} + k_1 + \cdots + k_i) - (g_{i,k_i} + k_1 + \cdots + k_{i-1}) = k_i$$
as $g_{i+1,1} = g_{i,k_i}$, and so the remaining differences from our sequence are $\{ k_1, \ldots, k_{q-1} \}$.  Hence the sequence is a linear realization for~$L$.  As $g_{1,1} = 0$, it is standard.
\end{proof}

\begin{ex}\label{ex:v19}
Suppose $v=19$ and $L = \{ 1^6, 2^3, 3^3, 4^2, 5^2, 6^2 \}$.  In the notation of the proof of Theorem~\ref{th:desc}, $q=6$ and
$(k_1, \ldots, k_6) = (1,1,1,3,6,6)$.  Using the graceful permutations
$$[0], [0], [0], [0,2,1], [1,5,0,3,2,4], [4,2,3,6,0,5,1],$$
results in the sequence
$$[0,1,2,3,5,4,7,11,6,9,8,10,16,14,15,18,12,17,13]$$
to satisfy $\BHR(L)$.
\end{ex}

As the realization in Theorem~\ref{th:desc} is standard, we may combine it with other standard realizations using Theorem~\ref{th:2lr}.
For example, we can remove the restriction that $a_1 \geq a_2$ provided that $a_1 > a_3$.

\begin{cor}\label{cor:desc2}
Let $L = \{ 1^{a_1}, \ldots, m^{a_m} \}$ with $a_2 \geq a_3 \geq \cdots \geq a_m$ and $a_1 > a_3$.  Then~$L$ has a linear realization.
\end{cor}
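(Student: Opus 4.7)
The plan is to reduce to Theorem~\ref{th:desc} by splitting $L$ into two pieces and gluing standard linear realizations via Theorem~\ref{th:2lr}. If $a_1 \geq a_2$, then the full list $L$ already satisfies $a_1 \geq a_2 \geq \cdots \geq a_m > 0$ and Theorem~\ref{th:desc} applies directly, so it suffices to handle the case $a_1 < a_2$ (where the hypothesis $a_1 > a_3$ is doing real work).

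Assuming $a_1 < a_2$, I would set
$$L_1 = \{1^{a_1-a_3},\, 2^{a_2-a_3}\}, \qquad L_2 = \{1^{a_3},\, 2^{a_3},\, 3^{a_3},\, 4^{a_4},\, \ldots,\, m^{a_m}\},$$
so that $L = L_1 \cup L_2$. The hypothesis $a_1 > a_3$ guarantees that $L_1$ contains at least one $1$, so placing all $1$s first gives the standard linear realization
$$[\,0,\, 1,\, 2,\, \ldots,\, a_1-a_3,\; a_1-a_3+2,\; a_1-a_3+4,\, \ldots,\; a_1-a_3+2(a_2-a_3)\,]$$
of $L_1$. For $L_2$, the exponent sequence is $a_3,\, a_3,\, a_3,\, a_4,\, \ldots,\, a_m$, which is non-increasing because $a_3 \geq a_4 \geq \cdots \geq a_m$, and every term is positive since $a_m \geq 1$. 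Hence Theorem~\ref{th:desc} supplies a standard linear realization of $L_2$, and Theorem~\ref{th:2lr} produces a linear realization of $L_1 \cup L_2 = L$.

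The only real subtlety is arranging a split into two pieces both fitting the monotone-exponent hypothesis of Theorem~\ref{th:desc}. The key observation is that we can ``level off'' the tops of the $1$- and $2$-columns to match the height $a_3$ of the $3$-column, peeling off the extra $1$s and $2$s into a two-underlying-set list $L_1$ that is trivially realizable; the remaining list $L_2$ then has a weakly decreasing exponent sequence by the hypothesis on $a_2, a_3, \ldots, a_m$. Edge cases (such as $a_2 = a_3$, which leaves $L_1 = \{1^{a_1-a_3}\}$, or $m \leq 2$, which falls under the known two-edge-length case) are all handled without modification.
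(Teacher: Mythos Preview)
Your argument is correct and follows essentially the same approach as the paper: split $L$ into a ``large'' monotone piece handled by Theorem~\ref{th:desc} and a ``small'' $\{1,2\}$-piece handled directly, then glue with Theorem~\ref{th:2lr}. The only cosmetic difference is the precise split: the paper takes $L_1 = \{1^{a_1-1}, 2^{a_3}, 3^{a_3}, \ldots, m^{a_m}\}$ and $L_2 = \{1, 2^{a_2-a_3}\}$, which avoids your preliminary case distinction on whether $a_1 \geq a_2$ (since $a_1-1 \geq a_3$ always holds), while your split $L_1 = \{1^{a_1-a_3}, 2^{a_2-a_3}\}$, $L_2 = \{1^{a_3}, 2^{a_3}, 3^{a_3}, \ldots, m^{a_m}\}$ achieves the same end. (Your aside about the edge case $a_2=a_3$ is slightly misplaced, since under your running assumption $a_1 < a_2$ this would contradict $a_1 > a_3$; but this does not affect the proof.)
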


\begin{proof}
Construct a standard linear realization for
$$L_1 = \{  1^{a_1 - 1},  2^{a_3}, 3^{a_3},  \ldots, m^{a_m} \}$$
using Theorem~\ref{th:desc}.  Construct a standard linear realization for $L_2 = \{ 1, 2^{a_2-a_3} \}$ using Theorem~\ref{Capp}(5).
Apply Theorem~\ref{th:2lr} to obtain a linear realization for $L_1 \cup L_2 = L$.
\end{proof}

\section*{Acknowledgements}

The second and the third author were partially supported by INdAM--GNSAGA.


\begin{thebibliography}{30}

\bibitem{Alspach08}
B. Alspach,
The wonderful Walecki construction,
\textit{Bull. Inst. Combin. Appl.} \textbf{52} (2008), 7--20.

\bibitem{AL} B. Alspach, G. Liversidge,
On strongly sequenceable abelian groups,
\textit{Art Discrete Appl. Math.} \textbf{3} (2020), \#P1.02.

\bibitem{ADMS} D.S. Archdeacon, J.H. Dinitz, A. Mattern, D.R. Stinson,
On partial sums in cyclic groups,
\textit{J. Combin. Math. Combin. Comput.} \textbf{98} (2016), 327--342.

\bibitem{B} R. Bacher,
Question asked at MathLinksEveryOne, 16 January 2008,
available at the webpage \texttt{http://www.mathlinks.ro/Forum/viewtopic.php?t=183554}.

\bibitem{BM} M. Buratti, F. Merola,
Dihedral hamiltonian cycle systems of the cocktail party graph,
\textit{J. Combin. Des.} \textbf{21} (2013), 1--23.

\bibitem{BP} M. Buratti, A. Pasotti,
On perfect $\Gamma$-decompositions of the complete graph,
\textit{J. Combin. Des.} \textbf{17} (2008), 197--209.

\bibitem{CDF} S. Capparelli, A. Del Fra,
Hamiltonian paths in the complete graph with edge-lengths $1,2,3$,
\textit{Electron. J. Combin.} \textbf{17} (2010), $\sharp$R44.

\bibitem{CMPPSums} S. Costa, F. Morini, A. Pasotti,  M.A. Pellegrini,
A problem on partial sums in abelian groups,
\textit{Discrete Math.} \textbf{341} (2018), 705--712.

 \bibitem{CP} S. Costa, M.A. Pellegrini,
 Some new results about a conjecture by Brian Alspach,
 \textit{Archiv der Mathematik} \textbf{115} (2020), 479--488.

\bibitem{DJ} J.H. Dinitz, S.R. Janiszewski,
On hamiltonian paths with prescribed edge lengths in the complete graph,
\textit{Bull. Inst. Combin. Appl.} \textbf{57} (2009), 42--52.

\bibitem{GallianSurvey} J.A. Gallian,
A dynamic survey of graph labeling,
\textit{Electronic J.~Combin.} \textbf{DS6} (2000, updated 2019), 432pp.

\bibitem{HOS} J. Hicks,  M.A. Ollis,  J.R. Schmitt,
Distinct partial sums in cyclic groups: polynomial method and constructive approaches,
\textit{J. Combin. Des.} \textbf{27} (2019), 369--385.

\bibitem{HR} P. Horak, A. Rosa,
On a problem of Marco Buratti,
\textit{Electron. J. Combin.} \textbf{16} (2009), $\sharp$R20.

\bibitem{KP} R.N. Karasev, F.V. Petrov,
Partitions of nonzero elements of a finite field into pairs,
\textit{Israel J. Math.} \textbf{192} (2012), 143--156.


\bibitem{KC} D. Kohen, I. Sadofschi Costa,
On a generalization of the seating couples problem,
\textit{Discrete Math.} \textbf{339} (2016), 3017--3019.


\bibitem{Kotzig73} A. Kotzig,
On certain vertex valuations of finite graphs,
\textit{Utilitas Math.} \textbf{4} (1973), 261--290.

\bibitem{OllisSurvey} M.A. Ollis,
Sequenceable groups and related topics,
\textit{Electron. J. Combin.} \textbf{DS10} (2002, updated 2013), 34pp.

\bibitem{O} M.A. Ollis,
Sequences in dihedral groups with distinct partial products,
\textit{Australas. J. Combin.} \textbf{78} (2020), 35--60.

\bibitem{M} F. Monopoli,
Absolute differences along Hamiltonian paths,
\textit{Electron. J. Combin.} \textbf{22} (2015), $\sharp$P3.20.

\bibitem{PP1} A. Pasotti, M.A. Pellegrini,
A new result on the problem of Buratti, Horak and Rosa,
\textit{Discrete Math.} \textbf{319} (2014), 1--14.

\bibitem{PPnew} A. Pasotti, M.A. Pellegrini,
On the Buratti--Horak--Rosa Conjecture about Hamiltonian Paths in Complete Graphs,
\textit{Electron. J. Combin.} \textbf{21} (2014), $\sharp$P2.30.

\bibitem{PPMeszka} A. Pasotti, M.A. Pellegrini,
A generalization of the problem of Mariusz Meszka,
\textit{Graphs and Combin.} \textbf{32} (2016), 333--350.

\bibitem{PM} E. Preissmann, M. Mischler,
Seating Couples Around the King's Table and a New Characterization of Prime Numbers,
\textit{Amer. Math. Monthly} \textbf{116} (2009),  268--272.

\bibitem{R} A. Rosa, On a problem of Mariusz Meszka,
\textit{Discrete Math.} \textbf{338} (2015), 139--143.

\end{thebibliography}
\end{document}